\definecolor{patriarch}{rgb}{0.5, 0.0, 0.5}
\newtheorem{thm}{Theorem}
\newtheorem{conj}[thm]{Conjecture}
\newtheorem{lemma}[thm]{Lemma}
\newtheorem{cor}[thm]{Corollary}
\newtheorem*{main}{Main Theorem}
\theoremstyle{definition}
\newtheorem{definition}[thm]{Definition}
\newtheoremstyle{casestyle}
{3pt}
{3pt}
{}
{}
{\bfseries}
{:}
{.5em}
{}
\theoremstyle{casestyle}
\newtheorem{case}{Case}[thm]
\newtheorem{scase}{Subcase}[case]
\newcommand{\cC}{\mathcal{C}}
\newcommand{\cG}{\mathcal{G}}
\newcommand{\cH}{\mathcal{H}}
\newcommand{\R}{\mathcal{R}}
\newcommand{\N}{\mathbb{N}}
\newcommand{\of}{\subseteq}
\renewcommand{\k}{k}
\newcommand{\K}{\mathcal{K}}
\newcommand{\w}{\omega}
\newcommand{\bdy}{\partial}
\newcommand{\bdyell}{\partial^{(\ell)}}
\newcommand{\bdytwo}{\partial^{(2)}}
\DeclarePairedDelimiter\ceil{\lceil}{\rceil}
\DeclarePairedDelimiter\floor{\lfloor}{\rfloor}
\DeclarePairedDelimiter\abs{|}{|}
\DeclarePairedDelimiter\parens{(}{)}
\DeclarePairedDelimiter\set{\{}{\}}
\DeclarePairedDelimiterX\setof[2]{\{}{\}}{#1\,:\,#2}
\title{Many triangles with few edges}%
\author[Kirsch and Radcliffe]{R. Kirsch and A.J. Radcliffe}
\begin{document}
\begin{abstract}Extremal problems concerning the number of independent sets or complete subgraphs in a graph have been well studied in recent years. Cutler and Radcliffe proved that among graphs with $n$ vertices and maximum degree at most $r$, where $n = a(r+1)+b$ and $0 \le b \le r$, $aK_{r+1}\cup K_b$ has the maximum number of complete subgraphs, answering a question of Galvin. Gan, Loh, and Sudakov conjectured that $aK_{r+1}\cup K_b$ also maximizes the number of complete subgraphs $K_t$ for each fixed size $t \ge 3$, and proved this for $a = 1$. Cutler and Radcliffe proved this conjecture for $r \le 6$. 
    
    We investigate a variant of this problem where we fix the number of edges instead of the number of vertices. We prove that $aK_{r+1}\cup \cC(b)$, where $\cC(b)$ is the colex graph on $b$ edges, maximizes the number of triangles among graphs with $m$ edges and any fixed maximum degree $r\le 8$, where $m = a \binom{r+1}{2} + b$ and $0 \le b < \binom{r+1}{2}$.
\end{abstract}

\maketitle
\section{Introduction}

The problem of determining which graphs contain the largest number of complete subgraphs of size $t$ has a long history. One can think of it as starting with the Kruskal-Katona theorem concerning the shadows of uniform hypergraphs.

\begin{thm}[Kruskal \cite{K63}, Katona \cite{K68}]\label{KK}
	Suppose that $1\le \ell\le t$. If $\cH \of \binom{[n]}t$ has size $N$, and we define
	\[
		\bdyell \cH = \setof[\Big]{A \in \binom{[n]}{\ell}}{\text{$\exists\, B\in \cH$ such that $A\of B$}},
	\]
	then $\bdyell \cH$ is at least as large as $\bdyell \cC$, where $\cC \of \binom{[n]}{t}$ consists of the first $N$ $t$-sets in colexicographic (or colex) order, and moreover $\bdyell \cC$ is an initial segment of $\ell$-sets in the colex order.
\end{thm}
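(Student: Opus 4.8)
The plan is to prove the ``one level down'' case $\ell = t-1$ first, and then bootstrap to all $\ell < t$ by peeling off layers. For the bootstrap the key identity is that, for $\ell\le t-1$, every $\ell$-set contained in a member of $\cH$ is already contained in some $(t-1)$-subset of that member, so $\bdyell\cH = \bdyell(\partial^{(t-1)}\cH)$; iterating one layer at a time, $\bdyell\cH = \partial^{(\ell)}\partial^{(\ell+1)}\cdots\partial^{(t-1)}\cH$ is obtained from $\cH$ by $t-\ell$ successive one-step shadows. Granting the $\ell=t-1$ statement (applied at each layer, to a family of $(t-1)$-sets, then of $(t-2)$-sets, and so on), together with the trivial monotonicity of the shadow under inclusion, an induction on $t-\ell$ gives $|\bdyell\cH|\ge|\bdyell\cC|$ and identifies $\bdyell\cC$ as an initial segment: at each layer we have a family of $j$-sets of size at least that of $\partial^{(j)}\cC$, its initial segment of that size contains $\partial^{(j)}\cC$, and the one-step bound plus monotonicity pushes this conclusion down a layer. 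This reduces everything to the case $\ell=t-1$ plus a purely computational lemma: if $N = \binom{a_t}{t}+\binom{a_{t-1}}{t-1}+\dots+\binom{a_s}{s}$ is the cascade (Macaulay) representation, with $a_t>a_{t-1}>\dots>a_s\ge s\ge 1$, then the initial segment of $N$ many $t$-sets in colex order has one-step shadow equal to the initial segment of $\partial(N):=\binom{a_t}{t-1}+\binom{a_{t-1}}{t-2}+\dots+\binom{a_s}{s-1}$ many $(t-1)$-sets, and $N\mapsto\partial(N)$ is monotone increasing.

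For the case $\ell=t-1$, the first tool is compression. For $i<j$ let $\Delta_{ij}$ replace $j$ by $i$ in any $t$-set that contains $j$ but not $i$ (and fix every other $t$-set), and let it act on a family $\mathcal{F}\of\binom{[n]}{t}$ by $\Delta_{ij}\mathcal{F} = \{\Delta_{ij}A : A\in\mathcal{F}\}\cup\{A\in\mathcal{F} : \Delta_{ij}A\in\mathcal{F}\}$. Two short lemmas do the work: $|\Delta_{ij}\mathcal{F}| = |\mathcal{F}|$, and $\partial(\Delta_{ij}\cH)\of\Delta_{ij}(\partial\cH)$, so $|\partial(\Delta_{ij}\cH)|\le|\partial\cH|$. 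Since the weight $\sum_{A\in\cH}\sum_{a\in A}a$ strictly decreases whenever some $\Delta_{ij}$ acts nontrivially, iterating these compressions terminates at a left-compressed family of the same size whose shadow is no larger. Hence it suffices to prove $|\partial\cH|\ge\partial(N)$ for left-compressed $\cH$; the colex initial segment $\cC$ itself attains equality by the computational lemma above, which simultaneously delivers the ``moreover'' clause.

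For left-compressed $\cH\of\binom{[n]}{t}$, I would induct on $t$ with an inner induction on $n$ (or on $N$). Split off the top element: $\cH_0 = \{A\in\cH : n\notin A\}\of\binom{[n-1]}{t}$ and $\cH_1 = \{A\setminus\{n\} : A\in\cH,\ n\in A\}\of\binom{[n-1]}{t-1}$, so $|\cH_0|+|\cH_1| = N$. Left-compressedness forces $\cH_1\of\partial\cH_0$ (sliding $n$ down to any free element of $[n-1]$ shows that adjoining such an element to a member of $\cH_1$ lands in $\cH_0$). Counting $(t-1)$-subsets of members of $\cH$ by whether or not they contain $n$ then gives the clean identity $|\partial\cH| = |\partial\cH_0|+|\partial\cH_1|$ (the part not containing $n$ is $\partial\cH_0\cup\cH_1 = \partial\cH_0$, the part containing $n$ is a copy of $\partial\cH_1$), hence in particular $|\partial\cH|\ge\max\{|\partial\cH_0|,\,|\cH_1|\}+|\partial\cH_1|$. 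One then runs the induction against the cascade of $N$: according to whether $|\cH_1|$ reaches the leading term $\binom{a_t-1}{t-1}$ of the ``shifted'' cascade, apply the inductive bound to $\cH_1$ (a family of $(t-1)$-sets) and to $\cH_0$, and reassemble using Pascal's identity $\binom{a_t}{t-1} = \binom{a_t-1}{t-1}+\binom{a_t-1}{t-2}$ and the monotonicity of $\partial(\cdot)$.

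I expect the main obstacle to be exactly that last reassembly: matching the recursion $|\partial\cH|\ge\max\{|\partial\cH_0|,\,|\cH_1|\}+|\partial\cH_1|$ to the cascade representation of $N$, with a clean case split on the size of $\cH_1$, and handling the boundary situations without fuss --- cascades with very few terms, $a_s=s$, $n$ barely exceeding $t$, and the degenerate cases where $\cH_0$ or $\cH_1$ is empty. The compression lemmas, the layer-peeling reduction, and the bookkeeping about colex initial segments are all routine; the arithmetic of the cascade --- including the direct verification that the proposed extremal family $\cC$ has shadow of size exactly $\partial(N)$ and that this shadow is itself a colex initial segment --- is where the real care is needed.
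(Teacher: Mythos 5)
The paper cites this theorem to Kruskal and Katona and gives no proof, so there is no internal argument to compare against; I can only evaluate your sketch on its own merits. What you describe is the standard compression proof of Kruskal--Katona, and the major pieces are all correct: the layer-peeling reduction to one-step shadows (which requires, and you invoke, the fact that the shadow of a colex initial segment is again a colex initial segment), the $(i,j)$-compression lemmas with a monovariant for termination, and the vertex split $\cH=\cH_0\sqcup\cH_1$ of a left-compressed family with $\cH_1\subseteq\partial\cH_0$ and $|\partial\cH|=|\partial\cH_0|+|\partial\cH_1|$.

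You correctly identify the reassembly as the obstacle, but your one-sentence description of it is optimistic in two respects that are worth naming. First, the case split on whether $|\cH_1|\ge\binom{a_t-1}{t-1}$ --- the leading term of the shifted cascade --- does not by itself close the argument when $N$ has a multi-term cascade: the resulting bound $|\cH_1|+\partial_{t-1}(|\cH_1|)\ge\binom{a_t-1}{t-1}+\binom{a_t-1}{t-2}=\binom{a_t}{t-1}$ recovers only the leading term of $\partial_t(N)$. The natural threshold is the full shifted sum $B=\binom{a_t-1}{t-1}+\binom{a_{t-1}-1}{t-2}+\cdots+\binom{a_s-1}{s-1}$, for which $B+\partial_{t-1}(B)=\partial_t(N)$ exactly. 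Second, substituting the inductive bound into both halves gives $|\partial\cH|\ge\partial_t(N-b)+\partial_{t-1}(b)$ with $b=|\cH_1|$, and as a pure function of $b$ this can dip below $\partial_t(N)$ (already for $t=2$, $N=6$, $b=5$ it gives $3<4$); one must retain the raw combinatorial bound $|\partial\cH_0|\ge|\cH_1|$ alongside the inductive one and establish $\max\{\partial_t(N-b),\,b\}+\partial_{t-1}(b)\ge\partial_t(N)$, keeping in mind that $\partial_t$ is only weakly monotone and that the case $\cH_1=\emptyset$ needs a separate reduction on $n$ to drive the induction. None of this is a wrong turn in your plan --- it is precisely the ``real care'' you flag --- but it is where the entire content of the theorem lives, and the sketch does not yet contain it.
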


This immediately implies, by taking $\ell=2$ and $\cH$ to be the set of complete subgraphs of size $t$ in $G$, that any graph containing $N$ complete subgraphs of size $t$ must have at least $\abs[\big]{\bdy^{(2)}\cC}$ edges, where $\cC$ consists of the first $N$ $t$-sets in colex order. This in turn implies an upper bound on the number of copies of $K_t$ in a graph on $m$ edges.
Similarly, since the optimal graphs are the same for all $t$, the colex graph $\cC(m)$, whose edges are the first $m$ pairs in colex order, has the largest number of complete subgraphs among all graphs with $m$ edges.

These results have been extended in a number of directions, usually by restricting the class of graphs considered. If we write $\k(G)$ for the number of complete subgraphs in a graph $G$, $\k_t(G)$ for the number of complete subgraphs of size $t$, and $\K_t(G)$ for the set of complete subgraphs of size $t$, we wish to find upper bounds on $\k(G)$ and $\k_t(G)$ over the class of graphs satisfying some constraints. One example is the following result due to Zykov \cite{Z49} (see also \cite{E62, H76, R, S71}), which bounds the number of complete subgraphs in graphs with bounded clique number, $\omega(G)$.

\begin{thm}[Zykov \cite{Z49}]\label{thm:Zykov}
	If $t\ge 2$ and $G$ is a graph with $n$ vertices and $\omega(G)\leq \omega$, then 
	\[
		\k_t(G)\leq \k_t(T_{n,\omega}),
	\]
	where $T_{n,\omega}$ is the Tur\'an graph with $\omega$ parts. The extremal graph is unique except when $n<t$ or $\omega<t$. 
\end{thm}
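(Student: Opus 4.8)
The plan is to follow Zykov's symmetrization strategy in two stages: first, that among $n$-vertex graphs with clique number at most $\omega$ some $\k_t$-maximizer is complete multipartite; second, that among complete multipartite graphs on $n$ vertices with at most $\omega$ parts the balanced one, namely $T_{n,\omega}$, is the unique maximizer of $\k_t$. Write $c(v):=\k_{t-1}(G[N(v)])$ for the number of copies of $K_t$ through $v$, so that $\k_t(G)=\k_t(G-v)+c(v)$. Everything rests on the \emph{symmetrization lemma}: if $u\not\sim v$ in $G$ and $G^{u\to v}$ is obtained from $G$ by deleting every edge at $v$ and then joining $v$ to each vertex of $N(u)$, then (i) $\omega(G^{u\to v})\le\omega(G)$, since a clique through $v$ in $G^{u\to v}$ gives, after replacing $v$ by $u$, a clique of the same size in $G$ (the edges inside $N(u)$ are untouched because $v\notin N(u)$), and (ii) $\k_t(G^{u\to v})=\k_t(G)+c(u)-c(v)$, since $G^{u\to v}-v=G-v$ while $N(v)$ now induces a copy of $G[N(u)]$. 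I will assume throughout that $2\le t\le\min(n,\omega)$; outside this range $\k_t$ vanishes on every admissible graph, including $T_{n,\omega}$, so the inequality is trivial and, as the statement says, uniqueness fails.

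For the first stage, call $\{x,y\}$ a \emph{twin pair} if $N(x)=N(y)$, and recall that a graph is complete multipartite exactly when every non-adjacent pair is a twin pair (non-adjacency is then transitive). Let $G$ maximize $\k_t$ over $n$-vertex graphs with $\omega(G)\le\omega$, and among all such maximizers pick one, $G$, with the largest number of twin pairs. The claim is that $G$ is complete multipartite. If not, there are $u\not\sim v$ with $N(u)\ne N(v)$; by the lemma and maximality of $\k_t(G)$ we must have $c(u)=c(v)$ (otherwise one of $G^{u\to v},G^{v\to u}$ would beat $G$), so both $G^{u\to v}$ and $G^{v\to u}$ are again $\k_t$-maximizers. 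A direct check of neighborhoods shows that replacing $G$ by $G^{u\to v}$ destroys exactly the $|T_v|-1$ twin pairs through $v$, creates at least the $|T_u|$ new twin pairs $\{v,x\}$ with $x\in T_u$, and leaves every other twin pair intact (here $T_u,T_v$ are the twin classes of $u,v$, which are disjoint because $N(u)\ne N(v)$); symmetrically for $G^{v\to u}$. Since $\max(|T_u|-|T_v|+1,\,|T_v|-|T_u|+1)\ge 1$, one of the two moves yields a $\k_t$-maximizer with strictly more twin pairs than $G$ — a contradiction.

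For the second stage, a complete multipartite graph $G=K_{n_1,\dots,n_p}$ with all parts non-empty has $\omega(G)=p\le\omega$ and $\k_t(G)=\sum_{|S|=t}\prod_{i\in S}n_i=e_t(n_1,\dots,n_p)$, the $t$-th elementary symmetric function of the part sizes. If $p<\min(n,\omega)$ then some part has $\ge 2$ vertices, and splitting it into a singleton and the rest only adds edges; since a maximizer has at least $t$ parts (as $\k_t(T_{n,\omega})>0$), this split strictly increases $\k_t$, so a maximizing complete multipartite graph has exactly $\min(n,\omega)$ parts. Now write $e_t=n_in_j\,e_{t-2}(\hat n)+(n_i+n_j)\,e_{t-1}(\hat n)+e_t(\hat n)$, where $\hat n$ is the list of the other $p-2$ part sizes; the move $(n_i,n_j)\mapsto(n_i-1,n_j+1)$ with $n_i\ge n_j+2$ fixes $n_i+n_j$, raises $n_in_j$ by $n_i-n_j-1\ge 1$, and $e_{t-2}(\hat n)>0$ since $\hat n$ has $p-2\ge t-2$ positive entries, so $e_t$ strictly increases. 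Hence the maximizer has part sizes differing by at most $1$; this is $T_{n,\omega}$, and it is the unique such configuration. Combining the two stages, $\k_t(G)\le\k_t(T_{n,\omega})$ for all admissible $G$.

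Uniqueness in the non-degenerate range follows by tracking the strict inequalities above, or, perhaps more cleanly, by an alternative induction on $n$: take $v$ of minimum degree, use a short counting argument (greedily building a clique) to see $\deg(v)\le\delta(T_{n,\omega})$, and apply the inductive hypothesis to $G-v$ and to the $K_\omega$-free graph $G[N(v)]$, getting $\k_t(G)=\k_t(G-v)+\k_{t-1}(G[N(v)])\le\k_t(T_{n-1,\omega})+\k_{t-1}(T_{\deg(v),\omega-1})\le\k_t(T_{n,\omega})$, where the final step uses that deleting a vertex of a largest part of $T_{n,\omega}$ gives $T_{n-1,\omega}$ with neighborhood inducing $T_{\delta(T_{n,\omega}),\omega-1}$; chasing equality then propagates $G=T_{n,\omega}$ down the induction, and the degenerate cases $n<t$, $\omega<t$ are exactly where $\k_t\equiv 0$. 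I expect the main obstacle to be the first stage: the symmetrization lemma supplies only local moves, and the real content is isolating a global quantity — here the number of twin pairs — that these moves drive monotonically to a complete multipartite graph without sacrificing optimality; getting the bookkeeping of twin pairs under $G\mapsto G^{u\to v}$ exactly right (in particular that no \emph{other} twin pair is lost) is the delicate point.
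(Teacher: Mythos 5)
The paper does not prove Theorem~\ref{thm:Zykov}: it is quoted from Zykov~\cite{Z49} as background, so there is no in-paper argument to compare against, and your proof has to be assessed on its own.

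For the inequality $k_t(G)\le k_t(T_{n,\omega})$, your Zykov symmetrization argument is sound. The two facts about $G^{u\to v}$ are correct, and you handle the genuinely delicate point well: a twin pair $\{x,y\}$ avoiding $v$ cannot be destroyed, because $N_G(x)=N_G(y)$ forces $x$ and $y$ to have identical adjacency to $u$ and hence to $v$ after the move; all $|T_v|-1$ old twin pairs through $v$ are lost while the $|T_u|$ pairs $\{v,x\}$ with $x\in T_u$ are gained; and the two potential gains $|T_u|-|T_v|+1$ and $|T_v|-|T_u|+1$ sum to $2$, so one of the two compressions strictly increases the twin-pair count while preserving optimality. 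The second stage (elementary symmetric functions, splitting a part, and the balancing move $(n_i,n_j)\mapsto(n_i-1,n_j+1)$) is also correct, though you should say explicitly why the split is strict: a maximizer has $p\ge t$ parts, so after the split $p-1\ge t-2$ other parts remain to extend a new edge to a $K_t$.

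The one place the write-up does not yet deliver the statement is uniqueness. Your first stage only shows that \emph{some} maximizer (the one with the most twin pairs) is complete multipartite; ``tracking strict inequalities'' there does not show \emph{every} maximizer is complete multipartite, so it does not by itself force every maximizer to be $T_{n,\omega}$. The fallback induction on $n$ is the right route, but two of its steps are asserted rather than carried out: the bound $\deg(v)\le\delta(T_{n,\omega})=n-\lceil n/\omega\rceil$ for a minimum-degree vertex (the greedy clique argument works --- with $\delta(G)\ge n-\lceil n/\omega\rceil+1$, each successive common neighborhood shrinks by at most $\lceil n/\omega\rceil-1$, yielding a $K_{\omega+1}$ after $\omega+1$ steps --- but it needs to be run), and the equality-chasing in $k_t(G)=k_t(G-v)+k_{t-1}(G[N(v)])\le k_t(T_{n-1,\omega})+k_{t-1}(T_{\deg(v),\omega-1})\le k_t(T_{n,\omega})$, which requires not only that $G-v\cong T_{n-1,\omega}$ and $G[N(v)]\cong T_{\deg(v),\omega-1}$ but also that $N(v)$ sits inside $G-v$ as the union of all parts but a smallest one; that last identification is the crux of propagating uniqueness and is currently missing. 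These are fillable gaps, but as written the uniqueness clause of the theorem is only sketched.
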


Galvin \cite{G} made the following conjecture\footnote{To be precise, Galvin's conjecture was about maximizing the number of independent sets in a graph of bounded minimum degree, but his conjecture is easily seen to be equivalent to the one here.}, and proved it in a wide range of cases.

\begin{conj}[Galvin \cite{G}]\label{conj:Galvin}
	If $G$ is a graph on $n$ vertices with maximum degree at most $r$, where $r \geq n/2 - 1$, then $\k(G)\leq \k(K_{r+1} \cup K_{n-r-1})$, the union of two complete graphs.
\end{conj}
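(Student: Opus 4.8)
The natural approach is induction on $n$, with $r$ fixed. The case $n \le r+1$ is immediate: there $\Delta(G)\le n-1\le r$ is no restriction, $k(G)\le k(K_n)=2^n$, and $K_{r+1}\cup K_{n-r-1}$ is read as $K_n$. So assume $r+2\le n\le 2(r+1)$ and put $s=n-r-1$, so that $1\le s\le r+1$. Two elementary facts will be used throughout. First, adding edges never decreases $k$, so a connected component $C$ with $|C|\le r+1$ may be replaced by the clique on $V(C)$ without violating the degree bound or lowering $k(G)$. Second, if $C$ is a component of $G$ then $k(G)=2^{|C|}+k(G-C)-1$, since every nonempty clique lies in a single component. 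Note also that if $G'$ is an induced subgraph of $G$ on $n'<n$ vertices then $r\ge n/2-1> n'/2-1$, so the inductive hypothesis applies to $G'$.

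Suppose first that $G$ is disconnected. Its smallest component $C$ has $c:=|C|\le\lfloor n/2\rfloor\le r+1$; completing $C$ to a clique and using the splitting identity gives $k(G)\le 2^c+k(G-C)-1$, where $G-C$ has $n-c$ vertices and maximum degree $\le r$. If $n-c\le r+1$, then $k(G-C)\le 2^{n-c}$ and it suffices to check $2^c+2^{n-c}\le 2^{r+1}+2^s$; this holds because $x\mapsto 2^x+2^{n-x}$ is convex and symmetric about $n/2$, while $c$ and $n-c$ both lie in the interval $[s,r+1]=[s,n-s]$, whose endpoints both evaluate to $2^{r+1}+2^s$. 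If instead $n-c\ge r+2$, then $c\le s-1$, the inductive hypothesis gives $k(G-C)\le 2^{r+1}+2^{s-c}-1$, and the desired bound reduces to the elementary inequality $2^c+2^{s-c}\le 2^s+1$ for $1\le c\le s-1$.

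It remains to bound $k(G)$ for \emph{connected} $G$ with $r+2\le n\le 2(r+1)$ and $\Delta(G)\le r$; since the conjectured extremal graph is disconnected, connected graphs should be strictly worse, so only the weak bound $k(G)\le 2^{r+1}+2^s-1$ is needed. If $\delta(G)\le s-1$ — which forces $s\ge 2$, hence $n-1\ge r+2$ — delete a vertex $v$ of minimum degree: then $k(G[N(v)])\le 2^{\deg v}\le 2^{s-1}$, the inductive hypothesis gives $k(G-v)\le 2^{r+1}+2^{s-1}-1$, and adding these bounds yields $k(G)=k(G-v)+k(G[N(v)])\le 2^{r+1}+2^s-1$.

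The principal remaining case — and the heart of the matter — is connected $G$ with $\delta(G)\ge s$, i.e.\ $G$ \emph{dense}. Passing to the complement, $\bar G$ has $\delta(\bar G)=n-1-\Delta(G)\ge s$ with $2s\le n$, and $k(G)=i(\bar G)$ is the number of independent sets of $\bar G$, so this case is precisely Galvin's original assertion that among graphs of minimum degree $\ge s$ (with $2s\le n$) the complete bipartite graph $K_{s,\,n-s}$ maximizes the number of independent sets. My plan here is: first add edges to $G$ — which keeps $G$ connected, keeps $\Delta(G)\le r$, and cannot decrease $k(G)$ — until $G$ is \emph{saturated}, meaning every non-edge has an endpoint of degree exactly $r$; then the vertices of degree $<r$ induce a clique of size $\le r$, which pins down the structure considerably. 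From such a configuration one wants to run a Zykov-type compression that pushes $G$ toward a disjoint union of cliques without decreasing $k$ (and if this ever disconnects $G$, the disconnected case above finishes the job); in the subcase where the saturation process terminates at an $r$-regular graph, one may instead hope to appeal to sharp upper bounds for the number of independent sets in a regular graph. The main obstacle is that every compression step must be carried out without raising any vertex's degree above $r$ — a symmetrization at a pair $\{u,v\}$ raises the degrees of all vertices in $N(u)\setminus N(v)$ — so the operation has to be chosen carefully using the saturation structure, and "stuck" configurations (an induced $P_3$ present but no legal compression available) must be analyzed directly. Making this scheme robust is exactly where the real work lies.
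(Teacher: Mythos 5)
Before anything else: the paper does not contain a proof of this statement. It appears only as a conjecture of Galvin, with the remark that Cutler and Radcliffe \cite{CR2013} proved it (in fact without the hypothesis $r\geq n/2-1$). So there is no ``paper's own proof'' to compare against; what follows is an assessment of your argument on its own terms.

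Your reductions are sound. The splitting identity $k(G)=k(C)+k(G-C)-1$ for a component $C$, the convexity argument $2^c+2^{n-c}\le 2^{r+1}+2^s$ for $c,n-c\in[s,r+1]$, the inequality $2^c+2^{s-c}\le 2^s+1$ for $1\le c\le s-1$, and the vertex-deletion bound $k(G)=k(G-v)+k(G[N(v)])\le (2^{r+1}+2^{s-1}-1)+2^{s-1}$ when $\delta(G)\le s-1$ are all correct, and they cleanly dispose of the base case, the disconnected case, and the low-minimum-degree case within the inductive framework. The translation of the remaining case into ``$K_{s,n-s}$ maximizes independent sets among graphs of minimum degree $\ge s$ when $2s\le n$'' is also the right identification of the hard core of the problem.

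The genuine gap is exactly where you say it is: the connected, $\delta(G)\ge s$ case is not proved, only planned. ``Saturate, then run a degree-constrained Zykov compression, and handle stuck configurations'' is a program, not an argument. The two obstacles you name are real and are not minor: a symmetrization at a non-adjacent pair $\{u,v\}$ increases the degree of every vertex of $N(u)\setminus N(v)$, so in a saturated graph (where almost every vertex already has degree $r$) legal compressions can be scarce or nonexistent; and you give no analysis of such stuck configurations, nor a potential function showing the process terminates, nor the regular-graph endgame you allude to. Since the extremal graphs in this regime are exactly the disjoint unions $K_{r+1}\cup K_s$, and you are in the regime where Kruskal--Katona-type compressions do not obviously respect the degree cap, this is not a routine verification; it is the theorem. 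As written, the proposal establishes several useful reductions but does not prove the statement.
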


Cutler and the second author \cite{CR2013} proved this conjecture (indeed, without the lower bound on $r$) showing that the extremal graph is the union of complete graphs $aK_{r+1}\cup K_{b}$ where $n=a(r+1)+b$ and $0\le b < r+1$. Soon thereafter Gan, Loh, and Sudakov \cite{GLS} considered the question of maximizing $\k_t(G)$ over this same class of graphs. They made substantial progress on the following conjecture.

\begin{conj}\label{conj:GLS}
	For all $t\geq 3$ and $n,r\geq 1$, if $G$ is a graph on $n$ vertices with maximum degree at most $r$, then
    \[
        k_t(G) \le k_t(aK_{r+1}\cup K_b),
    \]
    where $n=a(r+1)+b$ and $0\le b<r+1$.
\end{conj}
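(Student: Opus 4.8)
\smallskip
\noindent\textbf{A plan of attack.}
The natural line of attack is induction on $n$, reducing everything to the analysis of a suitably ``compressed'' extremal graph. First, I would observe that we may assume $3\le t\le r+1$, since a graph of maximum degree at most $r$ contains no $K_t$ once $t>r+1$ and the bound is then $0=0$. Next, if $G$ is disconnected, apply the inductive hypothesis to each component; the only thing to verify is that the conjectured optimum behaves subadditively under disjoint union, i.e.\ that for partial cliques of sizes $b_1,b_2\in\{0,\dots,r\}$ one has $\binom{b_1}{t}+\binom{b_2}{t}\le\binom{b_1+b_2}{t}$ when $b_1+b_2\le r+1$ and $\binom{b_1}{t}+\binom{b_2}{t}\le\binom{r+1}{t}+\binom{b_1+b_2-r-1}{t}$ otherwise, both of which follow from convexity of $x\mapsto\binom{x}{t}$ (equivalently, Schur convexity / majorization). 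So we may assume $G$ is connected, and by monotonicity of $k_t$ under edge addition we may assume $n>r+1$. If $G$ happens to contain a set $S$ of $r+1$ vertices inducing $K_{r+1}$ with no edges leaving $S$, then $G=K_{r+1}\sqcup(G-S)$ and we are done by the disconnected case. The entire difficulty is thus concentrated in ruling out connected graphs on $n>r+1$ vertices that beat $aK_{r+1}\cup K_b$.

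For that I would pass to an extremal graph: among all $G$ on $n$ vertices with $\Delta(G)\le r$ maximizing $k_t(G)$, choose one maximizing the vector $(k_{r+1}(G),k_r(G),\dots,k_2(G))$ lexicographically, and aim to show it must be a disjoint union of cliques---which, with all parts of size $\le r+1$ and $n$ vertices total, forces $aK_{r+1}\cup K_b$ by the convexity argument above. Suppose not; then some vertex $v$ has $G[N[v]]$ not complete, so there exist $x,y\in N(v)$ with $xy\notin E$. The plan is to perform an edge swap at $x$ (chosen, say, of minimum degree among such ``bad'' vertices, with $y$ chosen to maximize neighborhood overlap): delete an edge from $x$ to a vertex outside $N[v]$ and add the edge $xy$, keeping $d(x)$ fixed and $\Delta(G)\le r$, and then show that $k_t$ does not decrease while $(k_{r+1},k_r,\dots)$ strictly increases lexicographically, contradicting the choice of $G$. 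An equivalent route is to run the whole argument through the Kruskal--Katona identity $t\,k_t(G)=\sum_v k_{t-1}(G[N(v)])$, bound each summand by $\binom{d(v)}{t-1}\le\binom{r}{t-1}$, and control the total defect $\sum_v\bigl(\binom{r}{t-1}-k_{t-1}(G[N(v)])\bigr)$ by a stability version of Kruskal--Katona.

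The step I expect to be the genuine obstacle is exactly this local-move / defect estimate. Without the degree bound the problem is immediate---colex graphs, i.e.\ unions of cliques, win outright---so all the content lies in the interaction between the cap $\Delta(G)\le r$ and the wish to push edges into cliques: an edge swap that helps locally can simultaneously create one $K_t$ and destroy another, and bounding the net effect requires a tight inequality that appears to get harder as $t$ and $r$ grow. This is precisely why the triangle case ($t=3$), where the per-vertex defect is just $\binom{r}{2}-e(G[N(v)])$ and classical Kruskal--Katona stability suffices, is markedly more tractable, why Gan, Loh and Sudakov \cite{GLS} could close the case $a=1$, and why Cutler and the second author \cite{CR2013} needed a substantial case analysis to reach $r\le 6$. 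A uniform treatment of all $t$ and $r$ would require a correspondingly uniform ``compression under a degree constraint'' lemma, and supplying that is the heart of the matter.
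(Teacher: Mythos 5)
The statement you were given is Conjecture~\ref{conj:GLS}, which this paper states as an \emph{open conjecture} (attributed to Gan, Loh, and Sudakov); the paper does not prove it and explicitly records only partial progress, namely the $a=1$ case together with the reduction from general $t\ge 3$ to $t=3$ from \cite{GLS}, and the small-$r$ case $r\le 6$ due to Cutler and Radcliffe. So there is no ``paper's own proof'' to compare against, and your write-up is not a proof either: it is a plan. To your credit you label it as such, and the easy reductions you list are all fine --- you may indeed assume $3\le t\le r+1$; the disconnected case reduces to the superadditivity of $a\mapsto a\binom{r+1}{t}+\binom{b}{t}$, which follows from the discrete convexity of $\binom{\cdot}{t}$ exactly as you say; and an isolated $K_{r+1}$ splits off harmlessly. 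The double-counting identity $t\,k_t(G)=\sum_v k_{t-1}(G[N(v)])$ is also correct.

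The gap is the one you yourself flagged: the local-move / compression step under the degree cap. Saying ``delete an edge from $x$ to a vertex outside $N[v]$ and add $xy$, then show $k_t$ does not decrease'' is a description of what one would \emph{like} to prove, not an argument; an edge swap respecting $\Delta\le r$ can create some copies of $K_t$ while destroying others, and no inequality is supplied to control the net change, nor any argument that such a swap is always available (the vertex $x$ might have all its non-$N[v]$ edges protected by high-weight triangles, etc.). Since this is precisely the content of the open problem, a blind ``proposal'' cannot be credited as a proof of Conjecture~\ref{conj:GLS}; it is an accurate summary of why the conjecture is hard and why only the cases $a=1$, $t=3$, and $r\le 6$ are known. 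One small factual slip: the citation \cite{CR2013} is to the paper proving the (unrestricted-$t$) $\k(G)$ bound of Conjecture~\ref{conj:Galvin}, not to the later case analysis that established Conjecture~\ref{conj:GLS} for $r\le 6$.
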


Note that the conjecture is false for $t=2$, since whenever $n$ is not divisible by $r+1$ the conjectured extremal graph has fewer edges than an $r$-regular graph on $n$ vertices. Gan, Loh, and Sudakov proved Conjecture \ref{conj:GLS} for $a=1$ and also demonstrated that if the conjecture holds for $t=3$, then it holds for all $t \ge 3$.

\subsection{The edge analogue of Conjecture~\ref{conj:GLS}} 
\label{sub:the_edge_analogue_of_conjecture_ref_conj_gls}

In this paper we work on the edge, or Kruskal-Katona, version of Conjecture~\ref{conj:GLS}. We fix the number of edges of $G$ (and allow the number of vertices to be arbitrary) and ask which graphs maximize the number of complete subgraphs of size $t$. 

The edge analogue of Zykov's theorem (Theorem~\ref{thm:Zykov}) is known---it is an immediate consequence of the `rainbow' Kruskal-Katona theorem of Frankl, F\"uredi, and Kalai \cite{FFK88} and a more recent theorem of Frohmader \cite{Frohmader08}. For convenience in stating the theorems it is helpful to make a temporary definition.

\begin{definition}
	A subset $A\of \N$ is \emph{$\w$-rainbow} if no two elements of $A$ are congruent modulo $\w$. We write $\R_\w$ for the collection of all $\w$-rainbow subsets of $\N$.
\end{definition}

We first state the rainbow Kruskal-Katona theorem.
\begin{thm}[Frankl, F{\"u}redi, and Kalai \cite{FFK88}]\label{thm:FFK}
	Suppose that $1\le \ell\le t$. If $\cH \of \binom{\N}t$ has size $N$, and moreover $\cH$ is \emph{$\w$-partite}---i.e., we can partition $\N$ into $\w$ subsets such that no set in $\cH$ contains more than one element from each part---then $\bdyell \cH$ is at least as large as $\bdyell \cC$, where $\cC$ consists of the first $N$ $t$-sets in $\R_\w$ in colex order, and moreover $\bdyell \cC$ is an initial segment of $\R_\w\cap\binom{\N}{\ell}$ in colex order.
\end{thm}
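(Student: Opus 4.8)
The plan is to adapt the compression proof of the classical Kruskal--Katona theorem (Theorem~\ref{KK}), carrying the $\w$-partite structure along at every step. First I would normalize: fix a partition witnessing that $\cH$ is $\w$-partite, and relabel $\N$ by an order-preserving bijection within each part so that part $j$ becomes the set of integers congruent to $j$ modulo $\w$. Since this is a bijection of $\N$ it leaves every $\abs{\bdyell\cH}$ unchanged, and afterwards $\cH\of\R_\w\cap\binom{\N}t$ with the natural coloring $c(i):=i\bmod\w$, so that a set is rainbow exactly when its elements have pairwise distinct colors. The task becomes: among subfamilies of $\R_\w\cap\binom{\N}t$ of a fixed size, the colex initial segment $\cC$ minimizes the size of the $\ell$-shadow, and $\bdyell\cC$ is again an initial segment of $\R_\w\cap\binom{\N}\ell$.

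As in the uncolored case I would induct on $t$ and first settle $\ell=t-1$; the general case follows from $\bdyell\cH=\bdyell(\bdy^{(t-1)}\cH)$ for $\ell<t$ (with $\ell=t$ trivial), since the $\ell=t-1$ result makes $\bdy^{(t-1)}\cH$ no smaller than the matching colex segment of $\R_\w\cap\binom{\N}{t-1}$, and then the full inductive hypothesis for $t-1$ applied to that $(t-1)$-uniform family closes the loop (colex shadows being monotone in size, and all colex segments encountered remaining initial segments). For $\ell=t-1$: if $\cH$ uses only the integers $1,\dots,\w$ then every relevant $t$-set is automatically rainbow and the statement is exactly Theorem~\ref{KK}; otherwise the largest integer $M$ used by $\cH$ exceeds $\w$, so $M$'s color class contains a smaller integer. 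Now apply \emph{color-respecting shifts}: for $i<j$ with $c(i)=c(j)$ put $S_{ij}(A)=(A\setminus\set{j})\cup\set{i}$ when $j\in A\not\ni i$ and $S_{ij}(A)=A$ otherwise, extended to families in the usual size-preserving way; these map $\R_\w$ into itself, and iterating them to stability forces the link $\cH_1$ of $M$ to lie inside $\bdy^{(t-1)}\cH_0$, where $\cH_0=\setof{A\in\cH}{M\notin A}$. The shadow count then collapses to $\abs{\bdy^{(t-1)}\cH}=\abs{\bdy^{(t-1)}\cH_0}+\abs{\bdy^{(t-2)}\cH_1}$, and since every set of $\cH_1$ avoids the color $c(M)$ it lives inside a copy of $\R_{\w-1}$ on a smaller ground set, so both terms are bounded below by induction.

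The step I expect to be the main obstacle is establishing the two lemmas that make this legitimate. The first is shadow-monotonicity under color-respecting shifts, $\abs{\bdyell S_{ij}(\cH)}\le\abs{\bdyell\cH}$: in the uncolored case this is a brief local computation, but restricting the shifts to within a color class makes their interaction with the colex order on $\R_\w$ much less transparent (a shift-stable family need not equal $\cC$, as one already sees for $t=1$), so one must verify directly that a shift-stable family has $\ell$-shadow at least that of $\cC$, feeding the $M$-splitting above into the induction and tracking how the constraint of being rainbow and omitting one color propagates into the $\R_{\w-1}$ universe of $\cH_1$. The second is the numerical fact that the colex segment of a given size in $\R_\w\cap\binom{\N}t$ splits its $(t-1)$-shadow additively in exactly the way matched by the displayed identity --- the colored analogue of the key counting lemma in Katona's proof --- from which a routine check also yields that $\bdy^{(t-1)}\cC$ is an initial segment of $\R_\w\cap\binom{\N}{t-1}$. (A second, more algebraic route sidesteps compression entirely: pass to Kalai's exterior algebraic shift of the down-closure of $\cH$, which has the same $f$-vector and, in the balanced setting, is canonically \emph{color-shifted}, a class on which the shadow is read off directly and minimized by colex; this is close in spirit to the original argument of \cite{FFK88}.)
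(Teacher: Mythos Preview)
The paper does not prove Theorem~\ref{thm:FFK}: it is quoted from \cite{FFK88} as background and is never argued for in the text. There is therefore no ``paper's own proof'' to compare against; your proposal stands on its own as an attempted proof of a cited result.

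As a sketch, your outline is broadly sound and close in spirit to the original argument of Frankl, F\"uredi, and Kalai. The normalization step, the restriction to color-preserving shifts $S_{ij}$ with $c(i)=c(j)$, the observation that stability under such shifts yields $\cH_1\subseteq\bdy^{(t-1)}\cH_0$ (since any $A\ni M$ is rainbow and so automatically avoids every other element of color $c(M)$), and the resulting identity $\abs{\bdy^{(t-1)}\cH}=\abs{\bdy^{(t-1)}\cH_0}+\abs{\bdy^{(t-2)}\cH_1}$ are all correct. Two points deserve tightening. First, your stated induction is on $t$ alone, but to bound $\abs{\bdy^{(t-1)}\cH_0}$ you need a second inductive parameter (the maximum element $M$, or equivalently the ground-set size), since $\cH_0$ is still $t$-uniform; you implicitly invoke this when you say ``on a smaller ground set,'' but it should be made explicit in the scheme. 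Second, the step you flag as the main obstacle---the colored analogue of Katona's counting lemma, matching the split $\abs{\cH_0}+\abs{\cH_1}=N$ to the corresponding split of the colex segment $\cC$ and showing the two lower bounds combine to at least $\abs{\bdy^{(t-1)}\cC}$---is indeed where the work lies, and your proposal does not yet supply it; this is precisely the new content in \cite{FFK88} beyond the uncolored case. Your alternative suggestion via algebraic shifting is also viable and is, as you note, closer to how such results are often packaged today.
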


Frohmader's result exploits this theorem to extend its conclusion to flag complexes---set systems defined by the set of complete subgraphs in a graph.
\begin{thm}[Frohmader \cite{Frohmader08}]\label{thm:Froh}
	Let $G$ be a graph having $\w(G)\le \w$. If we let $\cH = \K_t(G)$ then $\bdyell\cH \of \K_\ell(G)$ satisfies the inequality of the previous theorem (though $\cH$ need not be $\w$-partite). 
\end{thm}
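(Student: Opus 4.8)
The statement to be proved is the ``edge Zykov'' theorem: for a graph $G$ with $\w(G)\le\w$, writing $\cH=\K_t(G)$, the shadow $\bdyell\cH$ satisfies the Frankl--F\"uredi--Kalai bound, i.e.\ $\abs{\bdyell\cH}\ge\abs{\bdyell\cC}$ where $\cC$ is the initial segment of $t$-sets of $\R_\w$ in colex order with $\abs{\cC}=\abs{\cH}=N$. The plan is to combine Theorems~\ref{thm:FFK} and~\ref{thm:Froh} directly: Frohmader's theorem (Theorem~\ref{thm:Froh}) is precisely the assertion that $\bdyell\K_t(G)\of\K_\ell(G)$ obeys the inequality of Theorem~\ref{thm:FFK} even though $\cH=\K_t(G)$ need not be $\w$-partite. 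So the ``proof'' here is really a matter of assembling the cited black boxes and spelling out why the conclusion is the claimed colex-initial-segment bound on shadows.

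First I would set $N=\abs{\K_t(G)}$ and invoke Theorem~\ref{thm:Froh}: since $\w(G)\le\w$, the flag complex $\K_\bullet(G)$ behaves, with respect to the face-number/shadow inequalities, like a $\w$-partite complex. Concretely, Frohmader's theorem hands us, for each pair $\ell\le t$, the inequality $\abs[\big]{\bdyell\K_t(G)}\ge\abs[\big]{\bdyell\cC}$, where $\cC$ is the first $N$ sets of $\R_\w\cap\binom{\N}{t}$ in colex order, together with the structural fact (inherited from Theorem~\ref{thm:FFK}) that $\bdyell\cC$ is itself an initial segment of $\R_\w\cap\binom{\N}{\ell}$ in colex order. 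Taking $\ell=2$ specializes this to the edge count: any graph $G$ with $\w(G)\le\w$ and $k_t(G)=N$ has at least $\abs[\big]{\bdytwo\cC}$ edges. Rephrasing contrapositively gives the upper bound we want: among graphs with $m$ edges and clique number at most $\w$, the number of $K_t$'s is maximized by ``filling in'' edges in colex order within $\R_\w$, i.e.\ by the $\w$-partite colex graph on $m$ edges (the graph whose clique complex is generated by the first block of $\R_\w$-edges). For the Tur\'an graph $T_{n,\w}$ in particular this recovers the vertex version, Theorem~\ref{thm:Zykov}, since $T_{n,\w}$ is exactly the $\w$-partite graph realizing a complete initial segment of $\R_\w$ edges.

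The genuinely substantive content is entirely contained in the two cited theorems, so there is no real obstacle to overcome in this write-up; the only care needed is bookkeeping. Two small points deserve attention. (i) One must check that the bound obtained for $\ell=2$ is attained by an actual graph and not merely by an abstract simplicial complex---this holds because the colex initial segment of $\R_\w$ at every level is the clique complex of a genuine $\w$-partite graph (a ``colex blow-up'' of a clique on $\le\w$ vertices), so the chain of shadow inequalities is tight simultaneously for all $\ell$. (ii) One should note the hypothesis handling: Theorem~\ref{thm:Froh} requires $\cH=\K_t(G)$ to be the set of $t$-cliques of a graph with $\w(G)\le\w$, which is exactly our setting, and it does \emph{not} require $\w$-partiteness of $G$ itself---this is the whole point of Frohmader's strengthening of Frankl--F\"uredi--Kalai, and it is why the theorem applies to all clique-bounded graphs rather than only to blow-ups.

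\textbf{Remark on what is \emph{not} needed.} No new extremal argument, no compression/shifting, and no induction on $G$ is required: the shifting and the colex optimality live inside Theorem~\ref{thm:FFK}, and the extension from $\w$-partite complexes to arbitrary flag complexes with bounded clique number lives inside Theorem~\ref{thm:Froh}. Thus the proof of the final statement is a one-line deduction, and the main interest of the paper lies instead in the \emph{maximum-degree} analogue discussed in the abstract, where no such black box is available.
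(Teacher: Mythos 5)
Your ``proof'' is circular: you write ``First I would set $N=\abs{\K_t(G)}$ and invoke Theorem~\ref{thm:Froh}\ldots'' --- but Theorem~\ref{thm:Froh} \emph{is} the statement you are trying to prove. Using the conclusion as an ingredient in its own proof establishes nothing. In fact the paper does not prove this result at all; it is a cited theorem from Frohmader's 2008 paper, and the authors treat it precisely as a black box. So there is no ``paper's own proof'' to compare against, and there is nothing in your write-up beyond that same citation dressed up as an argument.

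The substantive point you are glossing over: Theorem~\ref{thm:FFK} only applies to $\w$-partite $t$-uniform families, and the family $\K_t(G)$ of $t$-cliques in a graph with $\w(G)\le\w$ is generally \emph{not} $\w$-partite. The entire content of Frohmader's theorem is that the FFK shadow inequality nonetheless holds for such flag complexes. That extension is a genuine theorem (Frohmader proves that the face vector of a flag complex with bounded clique number is the face vector of a balanced --- i.e., $\w$-colorable --- complex), not bookkeeping, and it cannot be deduced from Theorem~\ref{thm:FFK} alone by any assembling of cited results. Your remark that ``no new extremal argument, no compression/shifting, and no induction on $G$ is required'' is exactly backwards: Frohmader's argument requires precisely such machinery. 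If the task is to reproduce a correct proof of this statement, you would need to reconstruct Frohmader's argument (or an equivalent one), not recite it.
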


\begin{cor}
	If $G$ is a graph with $m$ edges having $\w(G)\le \w$ then for all $t\ge 2$ we have $\k_t(G) \le \k_t(\R_\w(m))$, where $\R_\w(m)$ is the graph whose edges are the first $m$ $2$-sets in $\R_\w$ in colex order. 
\end{cor}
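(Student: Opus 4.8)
The plan is to deduce the corollary directly from Frohmader's theorem (Theorem~\ref{thm:Froh}), using in an essential way the structural clause it inherits from the rainbow Kruskal--Katona theorem (Theorem~\ref{thm:FFK}): that the relevant shadow is not merely small but is an \emph{initial segment} of $\R_\w\cap\binom\N2$ in colex order. The whole content lies in those two theorems; what remains is the routine translation into graph language.

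First I would dispose of the trivial cases: if $\k_t(G)=0$ there is nothing to prove, and if $t=2$ then $\K_2(G)=E(G)$, so $\k_2(G)=m=\k_2(\R_\w(m))$. So assume $t\ge 3$ and set $\cH=\K_t(G)$, $N=\k_t(G)=\abs{\cH}$. Since $\w(G)\le\w$, Theorem~\ref{thm:Froh} applies: $\bdytwo\cH\of\K_2(G)$ and $\abs{\bdytwo\cH}\ge\abs{\bdytwo\cC}$, where $\cC$ consists of the first $N$ $t$-sets of $\R_\w$ in colex order, and moreover $\bdytwo\cC$ is an initial segment of $\R_\w\cap\binom\N2$ in colex order. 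Next, $\K_2(G)$ is exactly the edge set of $G$, so $m=\abs{E(G)}=\abs{\K_2(G)}\ge\abs{\bdytwo\cH}\ge\abs{\bdytwo\cC}$. Hence $\bdytwo\cC$ is an initial segment of $\R_\w\cap\binom\N2$ of size at most $m$, which is to say $\bdytwo\cC\of E(\R_\w(m))$, the set of the first $m$ pairs of $\R_\w$ in colex order.

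Finally I would observe that every $t$-set $B\in\cC$ is a copy of $K_t$ in $\R_\w(m)$: $B$ is $\w$-rainbow by definition of $\cC$, and each of its $\binom t2$ pairs lies in $\bdytwo\cC\of E(\R_\w(m))$, so $B$ spans a complete subgraph of $\R_\w(m)$. As the $N$ sets of $\cC$ are distinct, this yields $\k_t(\R_\w(m))\ge N=\k_t(G)$, as required.

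I do not expect a real obstacle here — the argument is essentially a bookkeeping translation of Theorems~\ref{thm:FFK} and~\ref{thm:Froh}. The one point needing care, and the only place the "initial segment" clause is used, is the step $\bdytwo\cC\of E(\R_\w(m))$: from cardinality alone one would only know $\bdytwo\cC$ is small, not that its members are among the first $m$ pairs of $\R_\w$, and it is precisely that clause which lets the $t$-sets of $\cC$ be realized simultaneously as cliques of $\R_\w(m)$.
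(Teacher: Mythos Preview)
Your argument is correct and is essentially the same as the paper's: both apply Frohmader's theorem to $\K_t(G)$, use $E(G)\supseteq\bdytwo\K_t(G)$ to get $m\ge\abs{\bdytwo\cC}$, and then invoke the initial-segment clause to embed $\cC$ as $t$-cliques of $\R_\w(m)$. The only cosmetic difference is that the paper phrases the last step as $\k_t(\R_\w(m))\ge\k_t(\R_\w(m'))=\abs{\cC}$ via the identity $\K_t(\bdytwo\cC)=\cC$, whereas you argue the containment $\cC\of\K_t(\R_\w(m))$ directly.
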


\begin{proof}
	Define $\cC$ to be the first $k_t(G)$ $t$-sets from $\R_\w$ in colex order, and set
	\[
		m' = \abs{\bdytwo \cC}.
	\]
	We must have $m \ge m'$ since, from the fact that $E(G) \supseteq \bdytwo \K_t(G)$, we get
	\[
		m = e(G) \ge \abs{\bdytwo\K_t(G)} \ge \abs{\bdytwo \cC} = m'.
	\]
	Thus 
	\[
		\k_t(R_\w(m)) \ge \k_t(R_\w(m')) = \k_t(G),
	\]
	where the last identity is the simple fact that $\K_t(\bdytwo\cC)=\cC$; every edge of $\bdytwo\cC$ is in a copy of $K_t$. 
\end{proof}


\subsection{Results and Notation} 
\label{sub:notation}

We conjecture the following.

\begin{conj}\label{conj:allt}For any $t \ge 3$, if $G$ is a graph with $m$ edges and maximum degree at most $r$, then 
\[
	k_t(G) \le k_t(aK_{r+1}\cup \cC(b)),
\]
where $m = a\binom{r+1}{2}+b$ and $0 \le b < \binom{r+1}{2}$.
\end{conj}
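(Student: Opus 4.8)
Conjecture~\ref{conj:allt} is open in general; even for triangles we only reach $r\le 8$, and we sketch that case. The proof is by strong induction on $m$: fix $m$, assume the bound for all smaller values, and let $G$ have $m$ edges and $\Delta(G)\le r$. If $G$ is disconnected, every component $H$ has $e(H)<m$, so the inductive hypothesis gives $k_3(H)\le k_3(a_HK_{r+1}\cup\cC(b_H))$, where $e(H)=a_H\binom{r+1}{2}+b_H$ with $0\le b_H<\binom{r+1}{2}$; summing over components and using that $m'\mapsto k_3(a'K_{r+1}\cup\cC(b'))$ is superadditive then yields $k_3(G)\le k_3(aK_{r+1}\cup\cC(b))$. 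If $G$ is connected and $m\le\binom{r+1}{2}$, then $a=0$, $b=m$, and $\cC(m)\of\binom{[r+1]}{2}$ has maximum degree at most $r$ and maximizes $k_3$ among all graphs with $m$ edges by Theorem~\ref{KK}, so again we are done. The remaining case, $G$ connected with more than $\binom{r+1}{2}$ edges, is the heart of the matter.

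Two auxiliary facts are needed. The first is the superadditivity of $m'\mapsto k_3(a'K_{r+1}\cup\cC(b'))$ used above: writing things out, it reduces to the colex superadditivity $k_3(\cC(x))+k_3(\cC(y))\le k_3(\cC(x+y))$, which is immediate from Theorem~\ref{KK} since $\cC(x)\cup\cC(y)$ is a graph with $x+y$ edges, together with an exchange inequality
\[
	k_3(\cC(p))+k_3(\cC(q))\ \le\ \binom{r+1}{3}+k_3\bigl(\cC(p+q-\tbinom{r+1}{2})\bigr)
\]
valid whenever $p,q<\binom{r+1}{2}\le p+q$; this reflects that a later window of edges in the colex order on $\binom{[r+1]}{2}$ lies in at least as many triangles as an equally long earlier window, and can be extracted from the nesting of colex shadows in Theorem~\ref{KK}. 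The second is the identity $3k_3(G)=m(r-1)-\sum_{uv\in E(G)}\bigl(r-1-\mathrm{codeg}(uv)\bigr)$, which recasts the bound for connected $G$ as a lower bound of $b(r-1)-3k_3(\cC(b))$ on the total codegree deficiency $\sum_{uv}\bigl(r-1-\mathrm{codeg}(uv)\bigr)$ of such a graph.

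The connected case with $e(G)>\binom{r+1}{2}$ is the main obstacle. Such a $G$ is not a disjoint union of cliques, since it is connected and has more edges than any clique on at most $r+1$ vertices; it therefore contains a non-edge and is forced to carry codegree deficiency, and the task is to show this always exceeds the small deficiency $b(r-1)-3k_3(\cC(b))$ of the conjectured optimum. One would like to compress $G$ toward a shifted graph --- a shifted graph of maximum degree at most $r$ is a disjoint union of colex graphs each with at most $\binom{r+1}{2}$ edges, which would finish the argument --- but the standard shift $S_{ij}$ can push a degree above $r$, so it cannot be applied freely. Instead we run a case analysis on the local structure near the vertices of $G$ of degree less than $r$, tracking how much codegree each forces to be lost; the number of configurations grows with $r$, and we complete it only for $r\le 8$. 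Going past $r=8$, and more so handling $K_t$ for $t\ge 4$, seems to require a genuinely new idea.
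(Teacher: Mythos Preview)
Your framework for the $t=3$, $r\le 8$ case is broadly right: induct on $m$, handle disconnected graphs by combining component bounds, handle $m\le\binom{r+1}{2}$ via Kruskal--Katona, and focus on connected $G$ with $m>\binom{r+1}{2}$. Your treatment of the disconnected case is correct in outline, though your ``exchange inequality'' is exactly the content of Lemma~\ref{lem:b1b2}, and the paper proves it by a careful case analysis on the $[c,d]$-representations of $b_1,b_2$ rather than by any shadow-nesting argument; your one-line justification is an intuition, not a proof.

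Where your sketch diverges substantially from the paper is the connected case with $m>\binom{r+1}{2}$. You frame it as a lower bound on the total codegree deficiency $\sum_{uv\in E}(r-1-\mathrm{codeg}(uv))$, to be obtained by a case analysis ``near the vertices of $G$ of degree less than~$r$.'' The paper takes almost the opposite view. It first shows (Corollary~\ref{Delta=r}) that an extremal $G$ must attain $\Delta(G)=r$, and then organises everything around the vertices of degree \emph{exactly}~$r$. These are grouped into \emph{clusters} (maximal cliques all of whose edges have codegree $r-1$), and for each cluster $T$ one defines a red graph $R$ (the non-edges inside the common neighbourhood $S_T$) and a blue graph $B$ (edges from $S_T$ to the outside). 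A \emph{folding} operation turns $T\cup S_T$ into a $K_{r+1}$ at the cost of deleting $B$; Lemma~\ref{lem:QR} lower-bounds the net triangle gain by a function $Q(R)$ depending only on $R$. Section~\ref{sec:red} then excludes all red graphs with $e(R)$ small (at most $2$ for $r\ge 3$, at most $3$ for $r\ge 7$, at most $4$ for $r=8$), so every cluster in an extremal graph has $e(R)\ge k$ for an explicit $k=k(r)$. This feeds into a \emph{degree-multiset} optimisation (Section~\ref{sec:seq}): using the vertex bound $k_3(G)\le\tfrac{1}{3}\sum_v\binom{d(v)}{2}$ with a penalty of $k$ at each degree-$r$ vertex, one obtains $k_3(G)\le M_k(m,r)$ and then checks $M_k(m,r)<g_3(m,r)$ for $r\le 8$ (with a small table of residual cases at $r=8$).

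So the paper's ``case analysis'' is on the possible red graphs at clusters of degree-$r$ vertices, coupled with a degree-multiset bound; it is not a local analysis at low-degree vertices, and it does not use your edge-based codegree-deficiency identity at all. Your description of the heart of the argument is too vague to stand as a proof and does not match what the paper actually does.
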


Conjecture~\ref{conj:allt} is the exact analogue of Conjecture~\ref{conj:GLS}: we build as many $K_{r+1}$'s as we can, and then use our remaining resources optimally. Let's define
\[
	f_t(m,r) = \max\setof{\k_t(G)}{\text{$G$ has $m$ edges and $\Delta(G)\le r$}} .
\]
First note that the conjecture is easily seen to be true asymptotically as $m \to \infty$.

\begin{thm}\label{asymp} For all $3\le t\le r+1$,
	\[
		f_t(m,r) \le m\, \frac{\binom{r+1}{t}}{\binom{r+1}{2}} ,
	\]
	and moreover for fixed $t$ and $r$,
	\[
    	f_t(m,r) = (1 - o(1)) \, m\, \frac{\binom{r+1}{t}}{\binom{r+1}{2}}.
	\]
\end{thm}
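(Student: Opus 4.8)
The plan is to prove the two halves separately. The upper bound holds for every $m$ and follows from a one-line double count, while the matching asymptotic lower bound comes from the obvious construction $aK_{r+1}\cup\cC(b)$. For the upper bound I would count incidences between edges and copies of $K_t$. Every copy of $K_t$ contains exactly $\binom t2$ edges, so
\[
	\binom t2\,\k_t(G)=\sum_{uv\in E(G)}\abs{\setof{Q\in\K_t(G)}{uv\of Q}}.
\]
A copy of $K_t$ containing a fixed edge $uv$ is exactly a $(t-2)$-clique inside $N(u)\cap N(v)$, and since $u$ is a neighbour of $v$ while $u\notin N(u)\cap N(v)$, we have $\abs{N(u)\cap N(v)}\le r-1$. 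Hence each edge lies in at most $\binom{r-1}{t-2}$ copies of $K_t$, giving $\binom t2\,\k_t(G)\le m\binom{r-1}{t-2}$. It then remains only to record the identity
\[
	\frac{\binom{r-1}{t-2}}{\binom t2}=\frac{2\,(r-1)!}{t!\,(r+1-t)!}=\frac{\binom{r+1}{t}}{\binom{r+1}{2}},
\]
which yields $f_t(m,r)\le m\binom{r+1}{t}\big/\binom{r+1}{2}$.

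For the lower bound, put $a=\floor{m/\binom{r+1}2}$ and $b=m-a\binom{r+1}2$, so that $0\le b<\binom{r+1}2$; then $\cC(b)$ is a subgraph of $K_{r+1}$ (the first $\binom{r+1}2$ pairs in colex order are precisely $\binom{[r+1]}2$), so $G_0=aK_{r+1}\cup\cC(b)$ has exactly $m$ edges and $\Delta(G_0)\le r$. Thus $f_t(m,r)\ge\k_t(G_0)\ge a\binom{r+1}{t}$, and since $a\ge m/\binom{r+1}2-1$ we get
\[
	f_t(m,r)\ge m\,\frac{\binom{r+1}{t}}{\binom{r+1}{2}}-\binom{r+1}{t}.
\]
As $t$ and $r$ are fixed, the subtracted quantity is a constant while the leading term grows linearly in $m$, so $f_t(m,r)=(1-o(1))\,m\,\binom{r+1}{t}\big/\binom{r+1}{2}$, matching the upper bound.

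There is no genuine difficulty here; this statement serves as a warm-up. The only points needing care are the degree estimate $\abs{N(u)\cap N(v)}\le r-1$ (a common neighbour together with the other endpoint already accounts for two of the at most $r$ neighbours of $v$) and the observation that $\cC(b)$ with $b<\binom{r+1}2$ is a subgraph of $K_{r+1}$, which is what makes the construction an honest graph with exactly $m$ edges and maximum degree at most $r$. One could also remark that the upper bound is attained with equality whenever $\binom{r+1}2\mid m$, via $(m/\binom{r+1}2)\,K_{r+1}$, so the constant $\binom{r+1}{t}\big/\binom{r+1}{2}$ cannot be improved.
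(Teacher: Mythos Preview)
Your proof is correct and follows essentially the same approach as the paper: the same double count for the upper bound, and the same floor-of-$m/\binom{r+1}{2}$ copies of $K_{r+1}$ for the lower bound. The only cosmetic difference is that the paper pads the leftover $b$ edges with a matching rather than $\cC(b)$, but since you discard $\k_t(\cC(b))$ anyway this makes no difference to the argument.
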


\begin{proof}
	For the first bound, note that if $G$ is a graph on $m$ edges with $\Delta(G)\le r$ then the endpoints of an edge $e$ of $G$ have at most $r-1$ common neighbors, and complete subgraphs of size $t$ in $G$ containing $e$ correspond to $K_{t-2}$'s in this set of common neighbors. There are at most $\binom{r-1}{t-2}$ such $K_{t-2}$'s, so, counting pairs $(e,K)$ with $e$ an edge of $G$ and $K \in \K_t(G)$ containing $e$, we have
	\begin{align*}
	    \binom{t}2 \, k_t(G) \le m \, \binom{r-1}{t-2}.
	\end{align*}
	Thus 
	\[
		\k_t(G) \le m\, \frac{\binom{r-1}{t-2}}{\binom{t}2} = m\, \frac{\binom{r+1}{t}}{\binom{r+1}{2}} .
	\]
	Now we have 
	\begin{align*}
		 m\, \frac{\binom{r+1}{t}}{\binom{r+1}{2}} 
		 	&\le (1+o(1)) \parens[\bigg]{\raisebox{0.7ex}{$\displaystyle\frac{m}{\binom{r+1}2}{}-1$}} \,\binom{r+1}t \\
			&\le (1+o(1)) \floor[\bigg]{\raisebox{1ex}{$\displaystyle\frac{m}{\binom{r+1}{2}}$}}\,\binom{r+1}t \\
			&\le (1+o(1)) f_t(m,r) ,
	\end{align*}
where the final inequality comes from considering the graph that is the disjoint union of $\floor[\Big]{\raisebox{0.6ex}{$\frac{m}{\binom{r+1}{2}}$}}$ copies of $K_{r+1}$ and a matching to make the edge count up to $m$.
\end{proof}

We were not able to show that it is sufficient to prove Conjecture~\ref{conj:allt} only for $t=3$. Our main result is that the conjecture is true for triangles ($t=3$) for $r\le 8$. 

\begin{main}If $G$ is a graph with $m$ edges and maximum degree at most $r$ for any fixed $r \le 8$, then $$k_3(G) \le k_3(aK_{r+1}\cup \cC(b)),$$ where $m = a\binom{r+1}{2}+b$ and $0 \le b < \binom{r+1}{2}$. That is, the graphs with the maximum number of triangles consist of as many disjoint copies of $K_{r+1}$ as possible, with the remaining edges formed into a colex graph.\end{main}

Most of our graph theory notation is standard; see for instance Bollob\'as \cite{BBMGT} for a reference. In particular we will write (as we have done above) $G\cup H$ for the disjoint union of $G$ and $H$, and also $nG$ for the disjoint union of $n$ copies of $G$. 

We write $\cG(m,r)$ for the set of graphs $G$ with $m$ edges and having $\Delta(G) \le r$. In this class it will be handy to single out the connected ones; we write $\cG_C(m,r)$ for these. 

In Section~\ref{sec:disco} we prove some general results saying that in proving the conjecture we may restrict our attention to connected graphs that achieve maximum degree $r$. In Section~\ref{sec:clusters} we introduce an approach parallel to the folding technique in \cite{CR2013}, and in Section~\ref{sec:red} we discuss how this restricts the class of potentially extremal graphs. In Section~\ref{sec:seq} we give constraints on the extremal graphs in terms of their degree multisets. Finally, in Section~\ref{sec:mainthm}, we combine these two approaches to prove the main theorem.

\section{Disconnected graphs and graphs with small maximum degree}\label{sec:disco}

In this section, we prove two results that hold for all $r$ and allow us to restrict our attention to connected graphs with maximum degree equal to $r$. Both are corollaries of the following lemma concerning colex graphs. The colex graph $\cC(b)$ consists of a complete graph of size $c$, where $\binom{c}2 \le b < \binom{c+1}2$, and then potentially one more vertex, joined to $d$ vertices of the $K_c$, where $d = b - \binom{c}2$. Given this structure it is often useful to think of $b$ as written in the form $b = \binom{c}2 + d$ where $0\le d<c$. We abbreviate this fact as $b = [c,d]$, and write $\cC(c,d)$ for $\cC([c,d])$. It is easy to check that 
\[
	\k_t(\cC(c,d)) = \binom{c}t + \binom{d}{t-1}.
\]
We in fact also allow $d=c$: we have $[c,c]=[c+1,0]$, and the above formula for $k_t$ still applies.

Thus, the number of $K_t$'s in the conjectured extremal graph, which we denote by $g_t(m,r)$, can be written as
\[
	g_t(m,r) = k_t(aK_{r+1}\cup \cC(b)) = a\binom{r+1}t + \binom{c}t + \binom{d}{t-1},
\]
where $m = a \binom{r+1}2 + b$ and $b=[c,d]$. 

\begin{lemma}\label{lem:b1b2}
	Suppose $1 \le b_i \le \binom{r+1}{2}-1$ for $i=1,2$ and $t \ge 3$. Letting $G$ be the graph $\cC(b_1)\cup\cC(b_2)$ then $k_t(G) < g_t(b_1+b_2,r)$, unless $b_1 = \binom{c_1}{2}$ for some $c_1 \in \mathbb N$ and $b_2 = 1$ (or vice versa), in which case $k_t(G) = k_t(\cC(b_1+b_2)) = g_t(b_1+b_2,r) = f_t(b_1+b_2,r)$.
\end{lemma}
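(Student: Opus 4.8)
The first move is a reduction. Since $1 \le b_i \le \binom{r+1}{2}-1$, we have $a=0$ in the formula for $g_t(b_i,r)$, so $g_t(b_i,r) = k_t(\cC(b_i))$ and hence $k_t(G) = g_t(b_1,r) + g_t(b_2,r)$. Thus the lemma asserts precisely that $m \mapsto g_t(m,r)$ is strictly superadditive on $\{1,\dots,\binom{r+1}{2}-1\}$, with the single listed family of exceptions; since $k_t(G)$ and $g_t(b_1+b_2,r)$ are both symmetric in $(b_1,b_2)$ I may assume $b_1 \ge b_2$. Granting the (in)equality, the concluding chain of equalities in the exceptional case is automatic: there $b_1+b_2 = \binom{c_1}{2}+1 < \binom{r+1}{2}$, so $g_t(b_1+b_2,r) = k_t(\cC(b_1+b_2))$ and $\cC(b_1+b_2) \in \cG(b_1+b_2,r)$ (its maximum degree is $c_1 \le r$), whence $g_t(b_1+b_2,r) = k_t(\cC(b_1+b_2)) = f_t(b_1+b_2) \ge f_t(b_1+b_2,r) \ge k_t(G) = g_t(b_1+b_2,r)$ forces equality throughout, the middle equality being Kruskal--Katona (Theorem~\ref{KK}).

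To prove the superadditivity I would write $g_t$ as a sum of increments. Building the canonical graph $aK_{r+1}\cup\cC(b)$ one edge at a time in colex order within each clique, the $j$-th edge creates exactly $\delta(j) := \binom{i-1}{t-2}$ new copies of $K_t$, where, writing $j = a\binom{r+1}{2} + \binom{v-1}{2} + i$ with $2 \le v \le r+1$ and $1 \le i \le v-1$, the new edge joins a fresh vertex to the $i$-th vertex of the current clique. Hence $g_t(m,r) = \sum_{j=1}^{m}\delta(j)$, and $(\delta(j))_{j\ge 1}$ is periodic of period $M := \binom{r+1}{2}$; one period is the string $\sigma = B_2 B_3 \cdots B_{r+1}$, where $B_v = \left(\binom{0}{t-2}, \binom{1}{t-2}, \dots, \binom{v-2}{t-2}\right)$ is nondecreasing of length $v-1$ and $B_v$ is an initial segment of $B_{v+1}$. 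Consequently
\[
	g_t(b_1+b_2,r) - g_t(b_1,r) = \sum_{j=b_1+1}^{b_1+b_2} \delta(j), \qquad g_t(b_2,r) = \sum_{j=1}^{b_2} \delta(j),
\]
and both right-hand sides are sums of $b_2\ (<M)$ consecutive terms of $(\delta(j))$, the second being exactly the run that begins a period. So everything reduces to the combinatorial claim: among all runs of $L$ consecutive terms of $(\delta(j))$ (for $1 \le L \le M-1$), the run that starts a period has the smallest sum, and for $t=3$ it is the unique minimiser apart from length-$1$ runs sitting on the first edge of a clique (i.e.\ at a position $\equiv \binom{c}{2}+1 \pmod{M}$).

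The claim is a rearrangement inequality driven by the nesting of the blocks $B_v$. A length-$L$ run, read cyclically, is a suffix of some $B_v$, then several complete blocks, then a prefix of some $B_{v'}$; using $B_v \of B_{v+1}$ and monotonicity of $i \mapsto \binom{i-1}{t-2}$ one checks that the multiset of values it contains, sorted increasingly, dominates term-by-term the sorted multiset of values in the period-initial run of the same length, so its sum is at least as large. (Dually: reversing $\sigma$ turns the $B_v$ into nested nonincreasing strings, all suffixes of the longest one $\left(\binom{r-1}{t-2},\dots,\binom{0}{t-2}\right)$, and the period-initial run becomes the suffix of $\mathrm{rev}(\sigma)$, which visibly has the smallest window-sum of its length.) For equality one specialises to $t=3$, where $\delta(j) = i-1$, and traces the domination: it is tight only if the value-multiset matches that of the period-initial run, and a short inspection of $\sigma = 0,0,1,0,1,2,0,1,2,3,\dots$ — in particular that two consecutive $\delta$'s both vanish only at the start of a period (within $K_{r+1}$ the first edges of distinct vertices are never consecutive except for vertices $2$ and $3$) — forces the run to begin a period, unless $L=1$ and the single entry is a first-clique-edge; translating back, the former gives $k_t(G) < g_t(b_1+b_2,r)$ and the latter is exactly $b_2 = 1$ with $b_1 = \binom{c_1}{2}$.

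The inequality part is mechanical once the block picture is in place; I expect the equality characterisation to be the real work — ruling out any other length-$L$ run tying with the period-initial run. For $t=3$ this boils down to the elementary statement about consecutive first-edges in $K_{r+1}$ noted above. For larger $t$ the same scheme applies, but the set of ties is slightly larger (a clique with a $K_{t-1}$-free colex graph appended gains no $K_t$), so the characterisation would need a little extra bookkeeping; the triangle case $t=3$, which is what feeds into the Main Theorem, is the clean one.
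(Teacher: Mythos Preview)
Your reduction to strict superadditivity of $m\mapsto g_t(m,r)$ on $\{1,\dots,\binom{r+1}{2}-1\}$, and the further reformulation as a windowed-sum minimisation for the increment sequence $\delta(j)=\binom{i-1}{t-2}$, is correct and is a genuinely different route from the paper's. The paper argues by direct edge-transfer: writing $b_i=[c_i,d_i]$ and splitting into cases on which of $d_1,d_2$ vanish, it exhibits in each case a concrete $\beta\ge 1$ with $k_t(\cC(b_1+\beta)\cup\cC(b_2-\beta))>k_t(\cC(b_1)\cup\cC(b_2))$, then iterates. Your block picture $\sigma=B_2B_3\cdots B_{r+1}$ explains conceptually why those shifts help (each $B_v$ is an initial segment of $B_{v+1}$), and your closing remark is sharper than the paper: for $t\ge 4$ the strict inequality genuinely fails outside the stated exception (e.g.\ $t=4$, $b_1=\binom{c_1}{2}$, $b_2=2$ gives equality), and indeed the paper's own Subcase computation returns $\binom{c_2-1}{t-2}-\binom{d_2-1}{t-2}=0$ there.

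But the two places you label ``one checks'' and ``a short inspection'' are where the actual content lives, and as written they are gaps. First, the termwise domination of sorted multisets is not automatic from $B_v\subset B_{v+1}$ and monotonicity alone: a generic window is a suffix of some $B_{v_0}$, then complete blocks (possibly wrapping past $B_{r+1}$ into $B_2$), then a prefix of some $B_{v_1}$, and you need an honest argument that this multiset dominates the one built from $B_2,\dots,B_u$ plus a prefix of $B_{u+1}$. Second, your equality analysis only really handles $b_2\le 2$: the observation that two consecutive zeros in $\delta$ occur only at positions $1,2$ settles $b_2=2$, but for $b_2\ge 3$ (still with $b_1\ge b_2$) you must rule out any window of length $b_2$ starting at position $\ge b_2+1$ whose value-multiset matches the initial one, and the zero-spacing pattern $1,2,4,7,11,\dots$ together with wraparound windows such as $\{M,1,2\}$ require more than an inspection. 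The paper's case split, while less elegant, discharges exactly these points with explicit binomial identities; your approach could too, but you have not yet done so.
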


\begin{proof} Let's write $b_i = [c_i,d_i]$ for $i = 1, 2$ with $0\le d_i<c_i$ and $b_1 \ge b_2$ (so $c_1 \ge c_2$).  We split into cases depending on the values of the $d_i$.
	
\begin{case} $d_1, d_2 \ge 1$.
\end{case}

If $d_1 < d_2$, then since $b_1 \ge b_2$ we have $c_1 > c_2$, and $d_1 < d_2 < c_2 < c_1$. Then
\begin{align*}
\binom{c_1}{2} + d_2 &< \binom{c_1}{2} + c_1 = \binom{c_1+1}{2}\\
\binom{c_2}{2} + d_1 &< \binom{c_2}{2} + c_2 = \binom{c_2+1}{2},
\end{align*}
and we may swap $d_1$ and $d_2$ as $k_t(\cC(b_1)\cup\cC(b_2)) = k_t(\cC(c_1,d_2)\cup\cC(c_2,d_1))$, with $[c_1,d_2] > [c_2,d_1]$. Therefore we may assume $d_1 \ge d_2$.

First note that $\cC(b_1+1)\cup \cC(b_2-1) \in \cG(m,r)$ since $b_1 \le \binom{r+1}{2}-1$. We compare the number of $K_t$'s in $\cC(b_1)\cup\cC(b_2)$ to those in $\cC(b_1+1)\cup \cC(b_2-1)$. Observe that $k_t(\cC(b_1+1)) = k_t(\cC(b_1)) + \binom{d_1}{t-2}$ and $k_t(\cC(b_2-1)) = k_t(\cC(b_2)) - \binom{d_2-1}{t-2}$, so 
\[
	k_t(\cC(b_1+1)\cup\cC(b_2-1)) = k_t(\cC(b_1)\cup\cC(b_2)) + \binom{d_1}{t-2} - \binom{d_2-1}{t-2} > k_t(\cC(b_1)\cup\cC(b_2))
\] since $d_1 > d_2-1$.

\begin{case} Exactly one of $d_1,d_2$ is zero. 
\end{case}

\begin{scase}\label{sc:mid} $d_2 \ne 0$ or $d_1 < c_2$\end{scase}

If $d_2 = 0$ and $d_1 < c_2$,
\[
	\k_t(\cC(c_1,d_1)\cup\cC(c_2,0)) = \k_t(\cC(c_1,0)\cup\cC(c_2,d_1)),
\]
so we may assume that it is $d_1$ that is zero, and $d_2 \ge 1$.

We compare $G$ to the graph 
\[
	G' = \cC(b_1+c_2) \cup \cC(b_2-c_2) = \cC(c_1,c_2)\cup \cC(c_2-1,d_2-1) \in \cG(b_1+b_2,r).
\]
Note first that there are enough edges in $\cC(b_2)$ to remove $c_2$ of them because $c_2 \ge d_2+1 \ge 2$, and $d_2 \ge 1$, so if $c_2 = 2$ then $b_2 = 2$, and if $c_2 \ge 3$ then $b_2 > \binom{c_2}{2} \ge c_2$. Note also that $\Delta(\cC(b_1+c_2)) \le r$ because $d_1=0$ implies $b_1+c_2 \le \binom{r}{2}+r = \binom{r+1}2$. To prove the second equality above we have 
\[
	[c_2,d_2] - c_2 = \binom{c_2}{2} + d_2 - c_2 = \binom{c_2}{2} - \binom{c_2-1}{1} + d_2-1 = \binom{c_2-1}{2} + d_2 -1 = [c_2-1, d_2 -1].
\]
Note that in the representation $\cC(c_1,c_2)\cup \cC(c_2-1,d_2-1)$ we might have $c_2=c_1$.
The net change in $\k_t$ is
\begin{align*}
 k_t(G') - k_t(G)
 		&= \binom{c_1}{t} + \binom{c_2}{t-1} + \binom{c_2-1}{t} + \binom{d_2-1}{t-1} - \left(\binom{c_1}{t} + \binom{c_2}{t} + \binom{d_2}{t-1}\right)\\
		&= \binom{c_2}{t-1} + \binom{c_2-1}{t} - \binom{c_2}{t} + \binom{d_2-1}{t-1} - \binom{d_2}{t-1}\\
 	    &= \binom{c_2}{t-1} - \binom{c_2-1}{t-1} - \left(\binom{d_2}{t-1} - \binom{d_2 -1}{t-1}\right)\\
 	    &= \binom{c_2-1}{t-2} - \binom{d_2-1}{t-2}\\
 	    & > 0
\end{align*} because $c_2 > d_2$.

\begin{scase}$d_2 = 0$ and $d_1 \ge c_2$\end{scase}

We compare $G$ 
to the graph
\[
	G' = \cC(b_1+1)\cup\cC(b_2-1) = \cC(c_1,d_1+1)\cup\cC(c_2-1,c_2-2) \in \cG(b_1+b_2,r).
\]

To prove the second equality, we have
\[
	b_2-1 = \binom{c_2}{2}-1 = \binom{c_2-1}{2}+\binom{c_2-1}{1} - 1 = \binom{c_2-1}{2}+c_2-2.
\]

Note that we might have $[c_1,d_1+1] = [c_1+1,0]$. The net change in $\k_t$ is
\begin{align*}
 k_t(G') - k_t(G)
		&=\binom{c_1}t + \binom{d_1+1}{t-1} - \binom{c_1}t - \binom{d_1}{t-1} + \binom{c_2-1}t +\binom{c_2-2}{t-1}-\binom{c_2}t\\
		&=\binom{d_1+1}{t-1} - \binom{d_1}{t-1} -\left(\binom{c_2}t- \binom{c_2-1}t\right) +\binom{c_2-2}{t-1}\\
		&=\binom{d_1}{t-2} -\left(\binom{c_2-1}{t-1}- \binom{c_2-2}{t-1}\right)\\
		&=\binom{d_1}{t-2} - \binom{c_2 - 2}{t-2}\\
		& > 0
\end{align*} because $d_1 > c_2-2$.


\begin{case}$d_1 = d_2 =0$\end{case}

If $c_2 = 2$, then $b_2 = 1$. In this case, $\cC(b_2)$ is a single edge, and $\cC(b_1)$ is a complete graph, so 
\[
	k_t(\cC(b_1)\cup\cC(b_2)) = k_t(\cC(b_1+b_2)) = f_t(b_1+b_2,r)
\]
by the Kruskal-Katona theorem.

Otherwise, $c_2 \ge 3$, so $b_2 = \binom{c_2}{2} \ge c_2$, and we compare $G$ to 
\[
	G' = \cC(b_1+c_2) \cup \cC(b_2-c_2) = \cC(c_1,c_2)\cup \cC(c_2-2,c_2-3) \in \cG(b_1+b_2,r).
\] 

As in Subcase \ref{sc:mid}, $\Delta(\cC(b_1+c_2))\le r$. To prove the second equality, we have
\[
	b_2-c_2 = \binom{c_2}2-c_2 = \left( \binom{c_2-2}2+\binom{c_2-2}1\right)+\binom{c_2-1}1-c_2 = \binom{c_2-2}2+c_2-3.
\]

This move yields a net gain of
\begin{align*}
 \k_t(G')-\k_t(G)
 &= \binom{c_1}{t} + \binom{c_2}{t-1} + \binom{c_2-2}{t} + \binom{c_2-3}{t-1} - \left(\binom{c_1}{t} + \binom{c_2}{t}\right)\\
 &= \binom{c_2}{t-1} + \left(\binom{c_2-2}t-\binom{c_2}t\right) + \binom{c_2-3}{t-1} \\
 &= \binom{c_2}{t-1} - \binom{c_2-1}{t-1} - \left(\binom{c_2-2}{t-1} - \binom{c_2-3}{t-1}\right) \\
 & = \binom{c_2-1}{t-2} - \binom{c_2-3}{t-2}\\
 & > 0.
\end{align*} 

In all cases except $b_1 = \binom{c_1}{2}$ and $b_2=1$ we have shown that there exists $\beta \ge 1$ such that $k_t(\cC(b_1)\cup\cC(b_2)) < k_t(\cC(b_1+\beta)\cup\cC(b_2-\beta))$ and also that $\cC(b_1+\beta)\cup\cC(b_2-\beta)\in \cG(b_1+b_2,r)$. We have that, except in the special case,

\begin{align*}
k_t(\cC(b_1)\cup\cC(b_2)) &< \begin{cases}
k_t(\cC(b_1+b_2)) & \text{if }b_1 + b_2 \le \binom{r+1}{2}\\
k_t(\cC(\binom{r+1}{2})\cup\cC(b_1+b_2-\binom{r+1}{2})) &\text{otherwise}
\end{cases}\\
&= g_t(b_1+b_2,r).\qedhere
\end{align*}
\end{proof}

\begin{cor}\label{connected}
For $t \ge 3$, if Conjecture \ref{conj:allt} holds for numbers of edges up through $m-1$, and $G \in \cG(m,r)$ is not connected, then $k_t(G) \le g_t(m,r)$. 
\end{cor}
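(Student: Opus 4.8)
The plan is to induct on the number of connected components of $G$ and reduce everything to the two-component case handled by Lemma~\ref{lem:b1b2}. Write $G = G_1 \cup G_2 \cup \cdots \cup G_s$ with $s \ge 2$, where each $G_i$ is connected and has $m_i$ edges, $\sum_i m_i = m$. Since $\Delta(G_i) \le r$, each $G_i$ has $m_i \le \binom{r+1}{2}$; and $m_i \ge 1$ since a component with no edges is a single vertex contributing nothing to $k_t$ and can simply be deleted. By the inductive hypothesis (Conjecture~\ref{conj:allt} for all edge counts up through $m-1$, which covers each $m_i$ since $s \ge 2$ forces $m_i \le m-1$), we have $k_t(G_i) \le g_t(m_i, r)$, i.e. the number of $K_t$'s in $G_i$ is at most that in the appropriate union of $K_{r+1}$'s with a colex graph. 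So it suffices to prove the statement when each $G_i$ is already of the extremal form $a_i K_{r+1} \cup \cC(b_i)$ with $m_i = a_i \binom{r+1}{2} + b_i$ and $0 \le b_i < \binom{r+1}{2}$.

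Now collect all the $K_{r+1}$ blocks across the components; together they contribute $\big(\sum_i a_i\big)\binom{r+1}{t}$ copies of $K_t$, exactly as in the conjectured extremal graph on $\big(\sum_i a_i\big)\binom{r+1}{2}$ edges, so we may assume $a_i = 0$ for all $i$, i.e. $G = \bigcup_{i=1}^s \cC(b_i)$ with $1 \le b_i \le \binom{r+1}{2} - 1$ and $\sum_i b_i = b' := m - \big(\sum_i a_i\big)\binom{r+1}{2}$. It now suffices to show $k_t\big(\bigcup_i \cC(b_i)\big) \le g_t(b', r)$. We do this by repeatedly merging two colex components. Take any two of them, say $\cC(b_1)$ and $\cC(b_2)$; by Lemma~\ref{lem:b1b2}, either $k_t(\cC(b_1) \cup \cC(b_2)) \le k_t(\cC(b_1 + b_2)) \le g_t(b_1 + b_2, r)$ when $b_1 + b_2 \le \binom{r+1}{2}$, or more generally $k_t(\cC(b_1) \cup \cC(b_2)) \le g_t(b_1 + b_2, r) = k_t\big(a' K_{r+1} \cup \cC(b'')\big)$ where $b_1 + b_2 = a'\binom{r+1}{2} + b''$. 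In either case we may replace the two components $\cC(b_1), \cC(b_2)$ by $a' K_{r+1} \cup \cC(b'')$ (absorbing the new $K_{r+1}$'s into the earlier pile) without decreasing $k_t$; this strictly decreases the number of colex components, so after finitely many steps we are left with a single $\cC(\cdot)$ (plus $K_{r+1}$'s), which is exactly the conjectured extremal graph, giving $k_t(G) \le g_t(m, r)$.

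The only mildly delicate point is bookkeeping when Lemma~\ref{lem:b1b2} outputs the boundary case $b_1 + b_2 = \binom{c}{2}$ exactly, or when a merge produces $b'' = 0$ so that $a' K_{r+1} \cup \cC(0)$ is purely complete blocks — but in those cases equality (not a loss) holds and the formula $g_t$ still applies, so the monovariant (number of colex components, tie-broken by, say, the multiset of the $b_i$ in the lexicographic sense used in the proof of Lemma~\ref{lem:b1b2}) still strictly decreases or the process terminates. I expect no real obstacle here: the substance is entirely in Lemma~\ref{lem:b1b2}, and this corollary is a clean inductive packaging of it. One should just be careful to state that the induction is on $m$ (with the component count as an inner induction), so that "Conjecture~\ref{conj:allt} holds up through $m-1$" is legitimately available for each proper sub-edge-count $m_i < m$.
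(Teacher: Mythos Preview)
Your argument is essentially correct and is close in spirit to the paper's, but there is one outright false assertion and one place where the paper's route is noticeably cleaner.

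First, the sentence ``Since $\Delta(G_i)\le r$, each $G_i$ has $m_i\le\binom{r+1}{2}$'' is simply wrong: a connected graph of maximum degree $r$ can have arbitrarily many edges (a long path has $\Delta=2$). Fortunately you never use this claim; the induction only needs $m_i\le m-1$, which you correctly derive from $s\ge 2$. So this is a harmless error, but it should be deleted.

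Second, on strategy: you decompose $G$ into \emph{all} connected components, replace each by $a_iK_{r+1}\cup\cC(b_i)$, strip off the $K_{r+1}$'s, and then iteratively merge the remaining colex pieces two at a time via Lemma~\ref{lem:b1b2}, absorbing any newly created $K_{r+1}$'s as you go. This works, but the bookkeeping you flag in your last paragraph (what happens when a merge produces $b''=0$, or when the equality case of the lemma fires) is genuine and has to be said carefully. The paper avoids all of that by splitting $G$ into just \emph{two} nonempty pieces $N\cup M$ (one component and the rest), applying the inductive hypothesis to each, and then invoking Lemma~\ref{lem:b1b2} exactly once on $\cC(b_1)\cup\cC(b_2)$; the identity
\[
g_t\!\left((a_1+a_2)\tbinom{r+1}{2}+b_1+b_2,\,r\right)=(a_1+a_2)\tbinom{r+1}{t}+g_t(b_1+b_2,r)
\]
then finishes the count in one line. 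Your iterative merging is logically equivalent but packages the same content less economically.
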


\begin{proof}
Suppose $G = N \cup M$, where $0 \ne e(N) = a_1\binom{r+1}{2}+b_1$, $0 \ne e(M) = a_2\binom{r+1}{2}+b_2$, and $m = e(G) = e(M)+e(N) = a\binom{r+1}{2}+b$ with $0 \le b_1, b_2, b \le \binom{r+1}{2}-1$. By Conjecture \ref{conj:allt} for smaller values of $m$,\begin{align*}
k_t(G) &= k_t(N) + k_t(M)\\
& \le k_t(a_1K_{r+1}\cup \cC(b_1)) + k_t(a_2K_{r+1}\cup \cC(b_2))\\
&= k_t((a_1+a_2)K_{r+1})+k_t(\cC(b_1)\cup \cC(b_2)).\end{align*}

If $b_1, b_2 \le 1$, then $a = a_1+a_2$, $b = b_1 + b_2 \le 2$, and $k_t(\cC(b_1)\cup \cC(b_2)) = 0 = k_t(\cC(b_1+b_2))$, so we have shown $k_t(G) \le k_t(aK_{r+1}\cup\cC(b))= g_t(m,r)$.

If $b_1 = \binom{c_1}{2}$ and $b_2=1$, then $a=a_1+a_2$, $b=b_1+b_2$, and $k_t(\cC(b_1)\cup \cC(b_2))= k_t(\cC(b_1+b_2))$, so we have shown $k_t(G) \le k_t(aK_{r+1}\cup\cC(b))=g_t(m,r)$.

In all other cases, by Lemma \ref{lem:b1b2} we have $k_t(\cC(b_1)\cup \cC(b_2)) < g_t(b_1+b_2,r)$, so
\begin{align*}
k_t(G) &\le k_t((a_1+a_2)K_{r+1})+k_t(\cC(b_1)\cup \cC(b_2)) \text{ [shown above]}\\
&< k_t((a_1+a_2)K_{r+1})+g_t(b_1+b_2,r)\\
&= g_t\parens[\big]{(a_1+a_2)\binom{r+1}{2} + b_1+b_2,r}\\
&= g_t(m,r).\qedhere\end{align*}
\end{proof}

\begin{cor}\label{Delta=r} 
    If $t \ge 3$, $r\ge 2$, $m \ge \binom{r+1}{2}+1$, $G \in \cG(m,r)$ has $k_t(G) = f_t(m,r)$, and Conjecture \ref{conj:allt} holds for maximum degree at most $r-1$ and for numbers of edges up through $m-1$, then $\Delta(G) = r$.
\end{cor}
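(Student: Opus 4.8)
The plan is to argue by contradiction: suppose $G \in \cG(m,r)$ has $k_t(G) = f_t(m,r)$ but $\Delta(G) \le r-1$. Then $G$ also lies in $\cG(m,r-1)$, so by the hypothesis that Conjecture~\ref{conj:allt} holds for maximum degree at most $r-1$ we get $k_t(G) \le g_t(m,r-1)$, the count in the extremal graph for $\cG(m,r-1)$. On the other hand, since $m \ge \binom{r+1}{2}+1$, the graph $K_{r+1} \cup \cC\bigl(m - \binom{r+1}{2}\bigr)$ lies in $\cG(m,r)$ (the colex part has fewer than $\binom{r+1}{2}$ edges, hence maximum degree at most $r$), so $f_t(m,r) \ge g_t(m,r)$. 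Combining, it suffices to show $g_t(m,r) > g_t(m,r-1)$, i.e.\ that writing $m$ optimally against blocks of size $\binom{r+1}{2}$ strictly beats writing it optimally against blocks of size $\binom{r}{2}$, whenever $m \ge \binom{r+1}{2}+1$. (If instead $\Delta(G) \le r-2$ or smaller we iterate, or simply note $g_t(m,r-1) \ge g_t(m,r')$ for $r' < r-1$ reduces to the same comparison.)

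So the heart of the matter is the inequality $g_t(m,r) > g_t(m,r-1)$ for $m \ge \binom{r+1}{2}+1$ and $r \ge 2$. Here I would use the explicit formula $g_t(m,r) = a\binom{r+1}{t} + \binom{c}{t} + \binom{d}{t-1}$ where $m = a\binom{r+1}{2} + b$, $b = [c,d]$, together with the characterization (from Theorem~\ref{KK}, Kruskal--Katona) that $g_t(m,r)$ is in fact $k_t$ of a colex-type graph and hence, among graphs in $\cG(m,r)$, is at least $k_t$ of \emph{any} specific graph with $m$ edges and $\Delta \le r$. The cleanest route: the extremal graph $H_{r-1}$ for $\cG(m,r-1)$ has maximum degree $\le r-1 < r$, so $H_{r-1} \in \cG(m,r)$, giving $g_t(m,r-1) \le f_t(m,r)$; and separately $g_t(m,r) \le f_t(m,r)$. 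To get the \emph{strict} separation I want to exhibit a graph in $\cG(m,r)$ with strictly more than $g_t(m,r-1)$ triangles (when $r=3$, $t=3$) — for instance $K_{r+1} \cup (\text{colex on the rest})$, and compute. Concretely, since $m \ge \binom{r+1}{2}+1$, the graph $aK_{r+1} \cup \cC(b)$ contains at least one full $K_{r+1}$ and at least one further edge; that extra edge, being colex-appended to a $K_c$ with $c \ge 2$ inside $\cC(b)$ when $b \ge 1$... the counting here is routine but needs care in the boundary case $b \in \{0,1\}$.

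The main obstacle, and where I would spend the most effort, is verifying $g_t(m,r) > g_t(m,r-1)$ \emph{uniformly} in $m$, including the awkward small cases: $m = \binom{r+1}{2}+1$ (where $a=1$, $b=1$, and the right-hand side has $a' = \lfloor m/\binom{r}{2}\rfloor$ possibly as large as $2$ or $3$), and cases where $m$ is just below a multiple of $\binom{r}{2}$ so that $g_t(m,r-1)$ is ``efficient'' while $g_t(m,r)$ has a large colex remainder. A convenient lemma to isolate: for $t=3$ and any $r$, the function $m \mapsto g_3(m,r) - g_3(m,r-1)$ is nonnegative and strictly positive once $m \ge \binom{r+1}{2}+1$; this can be proved by a discrete-convexity / exchange argument — moving one unit of $m$ at a time, $g_3(\cdot,r)$ increases by an amount that is eventually larger than the increase of $g_3(\cdot,r-1)$, since copies of $K_{r+1}$ accrue $\binom{r+1}{3}$ triangles per $\binom{r+1}{2}$ edges versus $\binom{r}{3}$ per $\binom{r}{2}$, and the per-edge rate $\binom{r+1}{3}/\binom{r+1}{2} = \tfrac{r-1}{3}$ strictly exceeds $\binom{r}{3}/\binom{r}{2} = \tfrac{r-2}{3}$. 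Feeding the per-edge rate comparison into the base case $m = \binom{r+1}{2}+1$ closes the argument.
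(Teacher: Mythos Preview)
Your overall reduction is right: if $\Delta(G)\le r-1$ then $k_t(G)\le g_t(m,r-1)$ by the hypothesis on Conjecture~\ref{conj:allt}, and since $aK_{r+1}\cup\cC(b)\in\cG(m,r)$ it suffices to show $g_t(m,r-1)<f_t(m,r)$ (or the slightly stronger $g_t(m,r-1)<g_t(m,r)$, as you propose). The gap is in your justification of that strict inequality. The ``per-edge rate'' observation $\binom{r+1}{t}/\binom{r+1}{2}>\binom{r}{t}/\binom{r}{2}$ is only an asymptotic statement, and your discrete-convexity sketch is not correct as written: the forward differences $g_t(m+1,r)-g_t(m,r)$ equal $\binom{d}{t-2}$ where $b=[c,d]$, so they oscillate (for $t=3$ they run $0,0,1,0,1,2,0,1,2,3,\dots$), and the same is true for $g_t(\cdot,r-1)$. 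Hence the difference $g_t(m,r)-g_t(m,r-1)$ is not monotone in $m$, and an increment-by-increment comparison does not close. The boundary cases you flag (e.g.\ $m$ an exact multiple of $\binom{r}{2}$, or $b\in\{0,1\}$) are precisely where the argument must do real work, and you have not supplied it.

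The paper fills this gap by invoking Lemma~\ref{lem:b1b2}, which says that two nontrivial colex pieces $\cC(b_1)\cup\cC(b_2)$ with $b_i<\binom{r+1}{2}$ satisfy $k_t(\cC(b_1)\cup\cC(b_2))<g_t(b_1+b_2,r)$ except in a degenerate equality case. The $(r-1)$-extremal graph $a'K_r\cup\cC(b')$ is a disjoint union of such pieces (each $K_r=\cC(\binom{r}{2})$ has $\binom{r}{2}<\binom{r+1}{2}$), and one merges a single $K_r$ with $\cC(b')$ (if $b'\ge 2$) or two $K_r$'s together (if $b'\le 1$, using $m\ge\binom{r+1}{2}+1$ to guarantee $a'\ge 2$) to get a strict gain; the rest is absorbed by the easy superadditivity $f_t(m_1,r)+f_t(m_2,r)\le f_t(m_1+m_2,r)$. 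If you want to salvage your direct comparison of $g_t(m,r)$ and $g_t(m,r-1)$, the cleanest route is exactly this: regard $a'K_r\cup\cC(b')$ as sitting in $\cG(m,r)$ and apply Lemma~\ref{lem:b1b2} to a well-chosen pair of its components, rather than attempting an edge-by-edge rate argument.
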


\begin{proof}
The statement is trivial for $r=2$, so assume $r \ge 3$. Suppose $\Delta(G) \le r-1$, so $G \in \cG(m,r-1)$. By Conjecture \ref{conj:allt} for $r-1$, we have $k_t(G) \le k_t(aK_r\cup \cC(b))$ for $m = a\binom{r}{2} + b$ and $0 \le b < \binom{r}{2}$.

If $b \ge 2$, then $k_t(K_r\cup\cC(b)) < f_t(\binom{r}{2}+b,r)$ by Lemma \ref{lem:b1b2}. Therefore \begin{align*}
k_t(aK_r\cup \cC(b)) &= k_t((a-1)K_r) + k_t(K_r\cup\cC(b))\\
&< k_t((a-1)K_r) + f_t\parens[\big]{\binom{r}{2}+b,r}\\
&\le f_t\parens[\big]{(a-1)\binom{r}{2},r} + f_t\parens[\big]{\binom{r}{2}+b,r}\\
&\le f_t\parens[\big]{(a-1)\binom{r}{2} + \binom{r}{2}+b,r} = f_t(m,r)\end{align*} since $f_t$ is a superadditive function of $m$.

Otherwise, $b \le 1$, and $m \ge \binom{r+1}{2}+1$, so $a \ge 2$. Notice that $k_t(K_r\cup \cC(b)) = k_t(\cC(r,b))$.
\begin{align*}
k_t(aK_r\cup \cC(b)) &= k_t((a-2)K_r) + k_t(K_r) + k_t(K_r\cup \cC(b))\\
&= k_t((a-2)K_r) + k_t\parens[\big]{\cC(r,0)} + k_t(\cC(r,b))\\
&= k_t((a-2)K_r) + k_t(\cC(r,0)\cup\cC(r,b))\\
&< k_t((a-2)K_r) + f_t\parens[\Big]{2\binom{r}{2}+b, r}\text{ [by Lemma \ref{lem:b1b2} since $r \ge 3$]}\\
&\le f_t\parens[\Big]{(a-2)\binom{r}{2}, r} + f_t\parens[\Big]{2\binom{r}{2}+b, r}\\
&\le f_t\parens[\Big]{a\,\binom{r}{2}+b,r} = f_t(m,r),
\end{align*}
since $f_t$ is a superadditive function of $m$.
Therefore any $G \in \cG(m,r-1)$ is suboptimal, and $k_t(G) = f_t(m,r)$ implies $\Delta(G) = r$.
\end{proof}

\section{Edge Weights, Clusters, and Folding}\label{sec:clusters}

In this section we introduce a `folding' operation that acts on a graph containing a large subset of vertices with as many common neighbors as is possible. To this end we define the \emph{weight} of a pair of vertices to be the number of common neighbors they have: $w(xy) = |N(x)\cap N(y)|$ for any $x, y \in V(G)$. In particular if $xy$ is an edge this is the number of triangles containing that edge. If $xy$ is a non-edge this is the number of triangles we would gain by adding that pair as an edge. The maximum possible weight of an edge $xy$ is $w(xy)=r-1$, which occurs exactly when $d(x)=d(y)=r$ and $N[x]=N[y]$. (Note that a non-edge can have weight $r$. If $x\not\sim y$, $d(x) = d(y) = r$, and $N(x) = N(y)$, then $x$ and $y$ have $r$ common neighbors, but we cannot add the edge $xy$ to complete $r$ triangles as $x$ and $y$ already have the maximum degree.)

\begin{definition}
	An edge $xy\in E(G)$ is called \emph{tight} if $w(xy)=r-1$.  A complete subgraph in $G$, all of whose edges are tight, is called a \emph{tight clique}, and a maximal tight clique is called a \emph{cluster}.
\end{definition}

For any cluster $T$, let $S_T = \bigcap_{v \in T} N(v)$, the set of common neighbors of $T$. For $v \in T$, every other vertex $x \in T$ has $r-1$ neighbors in common with $v$. They must be the same common neighbors for each $x$ since $d(v) \le r$, so $|T\cup S_T| = r+1$. For any $v \in T$, the closed neighborhood of $v$ is $N[v] = T\cup S_T$.

If $G$ contains a cluster of size $r+1$, then it contains a copy of $K_{r+1}$, and it is disconnected. 
What we'll investigate is the situation in which there is a cluster $T$ with $\abs{T}<r+1$, and try to understand the edges missing from $S_T$. Let $R_T = \overline{G[S_T]}$. Since $T$ is maximal, $\delta(R_T) \ge 1$. The vertices in $S_T$ may have neighbors in $T$, $S_T$, and $V(G)\setminus(T\cup S_T)$. Let $B_T$ be the graph of edges $uv$ such that $u \in S_T$ and $v \in V(G)\setminus(T\cup S_T)$.

We will consistently write $t$ for $|T|$, $S$ for $S_T$, $s$ for $|S_T|$, $R$ for $R_T$, and $B$ for $B_T$. We refer to $R$ as the \emph{red graph} and the edges of $B$ as \emph{blue edges}. We will also define $a$, $b$, $c$, and $d$ from $m$ and $r$ by $m = a\binom{r+1}{2}+b$, $0 \le b < \binom{r+1}{2}$, and $b = [c,d]$. 

The following simple bound on the weight of a blue edge will aid in determining effects of local moves.

\begin{lemma}\label{s-2} Each blue edge has weight at most $s-2$.\end{lemma}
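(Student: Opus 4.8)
The plan is to bound the number of triangles through a fixed blue edge $uv$ (with $u\in S$, $v\notin T\cup S$) by counting the vertices that could possibly serve as the third vertex of such a triangle. By definition, $w(uv)$ is the number of common neighbors of $u$ and $v$, so I want to show $|N(u)\cap N(v)|\le s-2$. The key structural facts I would invoke are those established just before the statement: for any vertex $x\in T$ we have $N[x]=T\cup S$, and, more usefully, every vertex of $T$ is adjacent to every vertex of $S$ and to every other vertex of $T$, so each $x\in T$ has $d(x)=r$ and its neighborhood is exactly $(T\setminus\{x\})\cup S$.

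First I would observe that $v\notin T\cup S$, so no vertex of $T$ is adjacent to $v$ (a vertex $x\in T$ has all its neighbors inside $T\cup S$). Hence $N(v)\cap T=\emptyset$, and therefore $N(u)\cap N(v)\subseteq V(G)\setminus T$. Next, among the vertices outside $T$, the candidates for a common neighbor of $u$ and $v$ must lie in $N(u)$. Since $u\in S$, its neighbors include all of $T$ (each vertex of $T$ is adjacent to $u$), so $T\subseteq N(u)$ and $u$ has at most $r-t$ neighbors outside $T$. Thus $N(u)\setminus T$ has size at most $r-t$, and since $|T\cup S|=r+1$ we have $s=r+1-t$, i.e. $r-t=s-1$. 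So there are at most $s-1$ candidates. To improve this to $s-2$, I would note that $v$ itself lies in $N(u)\setminus T$ (since $uv$ is an edge), but $v\notin N(v)$, so $v$ cannot be counted among the common neighbors; removing it leaves at most $s-2$ vertices in $(N(u)\setminus T)\setminus\{v\}$ available to be common neighbors of $u$ and $v$. Therefore $w(uv)=|N(u)\cap N(v)|\le s-2$.

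I do not anticipate a serious obstacle here; the only point requiring care is confirming that $T\subseteq N(u)$ for every $u\in S$, which follows because $u\in S=\bigcap_{x\in T}N(x)$ means $u$ is a neighbor of each $x\in T$. The bookkeeping $r-t=s-1$ is immediate from $|T\cup S|=r+1$ and $T\cap S=\emptyset$ (the latter because $S$ consists of neighbors of vertices of $T$, and $G$ has no loops, so no $x\in T$ lies in $S$ — more precisely, $S\subseteq N(x)$ and $x\notin N(x)$). The subtlety worth a sentence in the writeup is simply that $v$ occupies one of the $s-1$ slots in $N(u)\setminus T$ without contributing a triangle, which is what pushes the bound from $s-1$ down to $s-2$.
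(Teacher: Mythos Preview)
Your proof is correct, and it takes a somewhat different route from the paper's. The paper's argument splits the common neighbors $z$ of the blue edge (there called $xy$, with $x\in S$) according to whether $z\in S$ or $z\notin S$: inside $S$ there are at most $s-1-d_R(x)$ neighbors of $x$, while outside $S$ any common neighbor gives another blue edge at $x$, and from $d_B(x)\le d_R(x)$ there are at most $d_R(x)-1$ such $z$'s (the $-1$ because $y$ itself is one of the blue neighbors). Summing gives $s-2$. Your argument instead bounds $N(u)\cap N(v)\subseteq N(u)\setminus T$ directly, observes $|N(u)\setminus T|\le r-t=s-1$, and then peels off $v$. Your version is cleaner in that it never invokes the red graph $R$ or the inequality $d_B\le d_R$; the paper's version, on the other hand, foreshadows the red/blue degree bookkeeping that is used repeatedly in the later folding arguments. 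Both are equally valid for this lemma.
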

\begin{proof}
Let $xy$ be a blue edge with $x \in S$. 
We will separately count the possible common neighbors $z$ in and out of $S$. There are at most $s - 1 - d_R(x)$ neighbors $z$ of $x$ that are in $S$. For $z \notin S$, $xz$ is also a blue edge, and $d_B(x) \le d_R(x)$ to maintain $d_G(x) \le r$. The maximum number of such $z$'s then is $d_R(x) - 1$. In total, $xy$ is in at most $s-1-d_R(x) + d_R(x) - 1 = s-2$ triangles.
\end{proof}

We will often find it useful to delete all the blue edges from a cluster and add all the red edges. We call this operation \emph{folding}:

\begin{definition}
	For $G \in \cG(m,r)$ with a cluster $T$ and $e(B) \ge e(R)$, we define a new graph $G_T \in \cG(m,r)$ by converting $T\cup S_T$ into a complete subgraph (of size $r+1$) and deleting all the edges in $B_T$. In other words, we define the \emph{folding of $G$ at $T$} by
	\[
		G_T = G + \binom{S_T}{2} - E(B_T).
	\]
\end{definition}

The graph $G_T$ contains a $K_{r+1}$, has maximum degree at most $r$, and has at most $m$ edges since $e(B) \ge e(R)$. If we can show that $e(B)\ge e(R)$ and $\k_3(G_T)\ge \k_3(G)$, then by induction on $m$,
\[
	\k_3(G) \le \k_3(G_T) \le g_3(e(G_T),r) \le g_3(m,r).
\]

\section{Excluded Red Graphs $R$}\label{sec:red}

In this section we identify several graphs that cannot occur as $R$ in an extremal graph $G$ because folding (when $e(B) \ge e(R)$) or another local move (when $e(B) < e(R)$) would increase the number of triangles. Our first step toward identifying when folding increases the number of triangles in $G$ will be to give an upper bound on the number of \emph{blue triangles}, or triangles containing two blue edges. We will use a compression argument, in which we determine which configuration of blue edges is least helpful to us.

\begin{definition}For vertices $x\not\sim_G y$, the \emph{compression of $G$ from $x$ to $y$}, denoted $G_{x\to y}$, is the graph obtained from $G$ by deleting all edges between $x$ and $N(x)\setminus N(y)$ and adding all edges from $y$ to $N(x)\setminus N(y)$.\end{definition}

We define an auxiliary function to use in the compression argument.

\begin{definition}For a graph $G$, let $d_2(G) := \sum_{v \in G} (d(v))^2$.\end{definition}

We will use compressions to maximize the following function and bound the number of blue triangles.

\begin{definition}For a graph $H$ and a bipartite graph $B$ with bipartition $(V(H),Y)$, we define \[\psi_H(B)=\sum_{v\in V(H)}\binom{d_B(v)}{2} + \sum_{v\in Y}|\{i,j\in V(H): i \ne j, iv, jv \in E(B), ij \notin E(H)\}|.\]\end{definition}

This function counts the number of blue triangles when applied to the situation where $H$ is the red graph $R_T$ and $\parens[\big]{\bigcup_{e\in E(B_T)} e}\setminus S_T$ induces a complete subgraph. Thus it serves as an upper bound on the number of blue triangles for a given red graph.

\begin{lemma}\label{lem:compincr}For a graph $H$, a bipartite graph $B$ with bipartition $(V(H), Y)$, and vertices $x, y \in Y$ with $N(x) \not\subseteq N(y)$ and $N(y) \not\subseteq N(x)$, \[\psi_H(B) \le \psi_H(B_{x\to y}) \text{ and } d_2(B) < d_2(B_{x \to y}).\]\end{lemma}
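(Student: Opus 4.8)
The plan is to exploit the fact that the compression $B\mapsto B_{x\to y}$ is extremely local: it changes only the neighbourhoods of $x$ and $y$ and leaves every vertex of $V(H)$ with the same degree, after which both inequalities fall out of short computations. Write $X=N_B(x)$ and $Z=N_B(y)$, both subsets of $V(H)$, and set $j=|X\setminus Z|$, $\ell=|Z\setminus X|$; the hypotheses $N(x)\not\subseteq N(y)$ and $N(y)\not\subseteq N(x)$ say precisely that $j,\ell\ge1$. Directly from the definition, $B_{x\to y}$ agrees with $B$ on every edge not incident to $x$ or $y$, while $N_{B_{x\to y}}(x)=X\cap Z$ and $N_{B_{x\to y}}(y)=X\cup Z$. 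A vertex $v\in X\setminus Z$ loses its edge to $x$ but gains one to $y$, and no edge at any other vertex of $V(H)$ is altered, so $d_{B_{x\to y}}(v)=d_B(v)$ for every $v\in V(H)$. Hence the sum $\sum_{v\in V(H)}\binom{d_B(v)}{2}$ in the definition of $\psi_H$ is unchanged by the compression, and in $d_2$ only the two terms contributed by $x$ and $y$ change.

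For $d_2$, since $|X\cap Z|+|X\cup Z|=|X|+|Z|$, expanding the squares gives
\[
d_2(B_{x\to y})-d_2(B)=|X\cap Z|^2+|X\cup Z|^2-|X|^2-|Z|^2=2j\ell>0,
\]
which is the asserted strict inequality.

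For $\psi_H$, I would rewrite its second sum as $\sum_{v\in Y}p_H(N_B(v))$, where for $S\subseteq V(H)$ we put $p_H(S)=\binom{|S|}{2}-e(H[S])$, the number of non-edges of $H$ inside $S$. Only the $v=x$ and $v=y$ terms change, so the claim $\psi_H(B)\le\psi_H(B_{x\to y})$ amounts to the inequality $p_H(X)+p_H(Z)\le p_H(X\cap Z)+p_H(X\cup Z)$, that is, supermodularity of $S\mapsto p_H(S)$ at the pair $(X,Z)$. The $\binom{|S|}{2}$ part of $p_H$ contributes exactly $j\ell$ to $p_H(X\cap Z)+p_H(X\cup Z)-p_H(X)-p_H(Z)$ (a computation in the four set sizes, using $|X|=|X\cap Z|+j$, $|Z|=|X\cap Z|+\ell$, $|X\cup Z|=|X\cap Z|+j+\ell$); and for the $-e(H[S])$ part, running over edges $pq\in E(H)$ one checks that the coefficient of $pq$ in $e(H[X])+e(H[Z])-e(H[X\cap Z])-e(H[X\cup Z])$ equals $-1$ when $pq$ joins $X\setminus Z$ to $Z\setminus X$ and $0$ otherwise, so that this part contributes $-e_H(X\setminus Z,Z\setminus X)$. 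Altogether
\[
\psi_H(B_{x\to y})-\psi_H(B)=j\ell-e_H(X\setminus Z,Z\setminus X)\ge0,
\]
since there are at most $j\ell$ edges between two sets of sizes $j$ and $\ell$. The only step requiring real care is this last edge count: one must actually run the case analysis on $pq$ according to which of $X\setminus Z$, $Z\setminus X$, $X\cap Z$, $V(H)\setminus(X\cup Z)$ contains each endpoint and confirm the stated coefficients; everything else is the degree invariance from the first paragraph together with elementary algebra.
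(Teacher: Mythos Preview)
Your proof is correct and follows essentially the same approach as the paper: both observe that the compression leaves every $V(H)$-degree unchanged (so the first sum in $\psi_H$ and all but the $x$ and $y$ terms of $d_2$ are fixed) and then handle the $x,y$ contributions directly. The only point of divergence is in verifying that the second sum in $\psi_H$ does not decrease: the paper gives a one-line injection argument (each non-edge pair lost from $x$'s term, having at least one endpoint in $X\setminus Z$, reappears as a newly counted pair in $y$'s term), whereas you compute the exact change $j\ell - e_H(X\setminus Z, Z\setminus X)$ via supermodularity of $S\mapsto p_H(S)$. Your version yields a little more information, but both arguments are equally short and rest on the same locality observation.
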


\begin{proof}For all $v \in V(H)$, we have $d_B(v)=d_{B_{x\to y}}(v)$ and therefore $\sum_{v\in V(H)}\binom{d_{B_{x\to y}}(v)}{2}=\sum_{v\in V(H)}\binom{d_B(v)}{2}$. $H$ is fixed, and $x$ is the only vertex of $Y$ that loses neighbors, so any decrease in $\sum_{v\in Y}|\{i,j\in V(H): i\ne j, iv, jv \in E(B), xy \notin E(H)\}|$ would be from the $x$ term: pairs $i,j \in H$ that are neighbors of $x$ and not adjacent in $H$. If $i, j \in N(y)$, then the edges $ix$ and $jx$ remain after the compression, so the pair $i, j$ is still counted in the $x$ term of the summation. If one or both of $i, j \notin N(y)$, then the pair $i, j$ is counted in the $y$ term of the summation after the compression but not before, compensating for the loss in the $x$ term.

Let $\ell := |N(x) \setminus N(y)| > 0$. Then \[d_2(B_{x\to y}) - d_2(B) = (d(x) - \ell)^2 + (d(y)+\ell)^2 - d(x)^2 - d(y)^2 = 2\ell(\ell+d(y)-d(x)) > 0.\]\end{proof}

Bipartite threshold graphs can be defined in different ways, but the following is the one we will use.

\begin{definition}[See \cite{Hammer}]\label{def:threshold}
	A graph $G$ is a \emph{bipartite threshold graph} if and only if $G$ is bipartite and the neighborhoods of vertices in one of the partite sets are linearly ordered by inclusion.
\end{definition}

\begin{lemma}\label{lem:family} Suppose that $\mathcal{B}$ is a family of bipartite graphs on a fixed vertex set $(X,Y)$ such that for any $B' \in \mathcal{B}$ and $x, y \in Y$ with $N_{B'}(x) \not\subseteq N_{B'}(y)$ and $N_{B'}(y) \not\subseteq N_{B'}(x)$, we also have $B'_{x\to y}\in \mathcal{B}$. If $B \in \mathcal{B}$ and $d_2(B) = \max\{d_2(B') : B' \in \mathcal{B}\}$, then $B$ is a bipartite threshold graph.\end{lemma}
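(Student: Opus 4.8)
The plan is to obtain Lemma~\ref{lem:family} as a short consequence of Lemma~\ref{lem:compincr}, via the standard principle that an object extremal for a quantity which \emph{strictly} increases under a move must be fixed by that move. Recall that by Definition~\ref{def:threshold} a bipartite graph is a threshold graph exactly when the neighbourhoods of the vertices in one of its two partite sets form a chain under inclusion; I will establish this for the partite set $Y$, i.e.\ I will show that for all $x,y\in Y$ either $N_B(x)\of N_B(y)$ or $N_B(y)\of N_B(x)$.

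Concretely, I would argue by contradiction. Suppose $B\in\mathcal B$ attains $\max\{d_2(B'):B'\in\mathcal B\}$ but is not a bipartite threshold graph. Then the $Y$-neighbourhoods of $B$ are not linearly ordered by inclusion, so there are $x,y\in Y$ with $N_B(x)\not\of N_B(y)$ and $N_B(y)\not\of N_B(x)$. By the closure hypothesis on $\mathcal B$, the compressed graph $B_{x\to y}$ again lies in $\mathcal B$, and Lemma~\ref{lem:compincr}---whose $d_2$-conclusion holds regardless of the auxiliary graph $H$ on the partite set $X=V(H)$---gives $d_2(B)<d_2(B_{x\to y})$. This contradicts the maximality of $d_2(B)$. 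Hence no such incomparable pair exists, the neighbourhoods $\{N_B(y):y\in Y\}$ are pairwise comparable, and so $B$ is a bipartite threshold graph.

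I do not anticipate a genuine obstacle here: the real work has already been done in Lemma~\ref{lem:compincr}, and this statement is essentially its corollary. The only points that need (routine) care are matching the compression appearing in Lemma~\ref{lem:compincr} with the operation $B_{x\to y}$ named in the closure hypothesis defining $\mathcal B$---these coincide once $X$ is identified with $V(H)$, noting that $x\not\sim y$ automatically since $x,y$ lie in the same partite set---and observing that ``linearly ordered by inclusion'' is the same as ``pairwise comparable'', so that excluding a single incomparable pair suffices. (One may also remark that $\mathcal B$ is finite, all its members sharing the fixed vertex set $(X,Y)$, so a maximiser exists; but the proof only uses the maximality of the given $B$.)
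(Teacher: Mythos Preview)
Your proposal is correct and follows essentially the same argument as the paper: assume an incomparable pair $x,y\in Y$, apply the closure hypothesis to get $B_{x\to y}\in\mathcal B$, and invoke the strict $d_2$-increase from Lemma~\ref{lem:compincr} to contradict maximality. The additional remarks you make (that $x\not\sim y$ automatically, and that pairwise comparability is the same as being linearly ordered) are fine but are not spelled out in the paper's version either.
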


\begin{proof}
In $B$, suppose there are $x, y \in Y$, $N_B(x) \not\subseteq N_B(y)$, and $N_B(y) \not\subseteq N_B(x)$. Then $d_2(B) < d_2(B_{x\to y})$ by Lemma \ref{lem:compincr}, but $d_2(B) \ge d_2(B_{x\to y})$ because $B_{x\to y} \in \mathcal{B}$ and $d_2(B) = \max\{d_2(B') : B' \in \mathcal{B}\}$. Therefore every pair $x, y \in Y$ has $N(x) \subseteq N(y)$ or $N(x) \supseteq N(y)$. By Definition \ref{def:threshold}, $B$ is a bipartite threshold graph.
\end{proof}

\begin{lemma}\label{lem:threshold} Given a graph $H$, among bipartite graphs $B$ with bipartition $(V(H), Y)$ and a fixed number of edges, some bipartite threshold graph $B$ maximizes $\psi_H(B)$.\end{lemma}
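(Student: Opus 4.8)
The plan is to apply Lemma~\ref{lem:family}, but not to the family of \emph{all} bipartite graphs with the prescribed number of edges; instead I would apply it to the sub-family consisting of those bipartite graphs that already maximize $\psi_H$. The point is that Lemma~\ref{lem:compincr} tells us two things at once about a compression $B'_{x\to y}$ (when $N(x),N(y)$ are incomparable): it does not decrease $\psi_H$, and it strictly increases $d_2$. So among $\psi_H$-maximizers, compression stays within the class, and Lemma~\ref{lem:family} then forces a $d_2$-maximizer of that class to be threshold.

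First I would fix the number of edges $e$ and let $\mathcal{B}$ be the collection of all bipartite graphs on the (fixed) vertex set $(V(H),Y)$ with exactly $e$ edges; this is finite, and nonempty as long as $e$ is attainable. The key preliminary observation is that $\mathcal{B}$ is closed under compression: given $B' \in \mathcal{B}$ and $x,y \in Y$ with $N_{B'}(x)\not\subseteq N_{B'}(y)$ and $N_{B'}(y)\not\subseteq N_{B'}(x)$, the compression $B'_{x\to y}$ only deletes and adds edges between $Y$ and $V(H)$, so it is still bipartite with bipartition $(V(H),Y)$; and it deletes $|N_{B'}(x)\setminus N_{B'}(y)|$ edges at $x$ while adding exactly that many new edges at $y$ (new because, by definition, none of the vertices of $N_{B'}(x)\setminus N_{B'}(y)$ was previously adjacent to $y$), so the edge count is unchanged. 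Hence $B'_{x\to y}\in\mathcal{B}$.

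Next I would set $M = \max\{\psi_H(B') : B'\in\mathcal{B}\}$ and let $\mathcal{B}^* = \{B'\in\mathcal{B} : \psi_H(B') = M\}$. I claim $\mathcal{B}^*$ satisfies the hypothesis of Lemma~\ref{lem:family}. Indeed, if $B'\in\mathcal{B}^*$ and $x,y\in Y$ have incomparable neighborhoods, then $B'_{x\to y}\in\mathcal{B}$ by the previous paragraph, and $\psi_H(B'_{x\to y}) \ge \psi_H(B') = M$ by Lemma~\ref{lem:compincr}; since $M$ is the maximum over $\mathcal{B}$, equality holds and $B'_{x\to y}\in\mathcal{B}^*$. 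So $\mathcal{B}^*$ is closed under compression. Now Lemma~\ref{lem:family}, applied to the finite nonempty family $\mathcal{B}^*$, produces a graph $B\in\mathcal{B}^*$ attaining $\max\{d_2(B') : B'\in\mathcal{B}^*\}$ which is a bipartite threshold graph; since $B\in\mathcal{B}^*$ we have $\psi_H(B)=M$, so $B$ is a bipartite threshold graph maximizing $\psi_H$ among all bipartite graphs on $(V(H),Y)$ with $e$ edges, as required.

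I do not expect a genuine obstacle here: the argument is essentially bookkeeping on top of Lemmas~\ref{lem:compincr} and~\ref{lem:family}. The only points that need care are verifying that compression preserves both the bipartition and the edge count (so that $\mathcal{B}$, and hence $\mathcal{B}^*$, is closed under it), and making sure the $d_2$-maximization in Lemma~\ref{lem:family} is performed over $\mathcal{B}^*$ rather than over all of $\mathcal{B}$, so that the resulting threshold graph is guaranteed to lie in $\mathcal{B}^*$ and therefore to maximize $\psi_H$.
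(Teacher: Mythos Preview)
Your proposal is correct and is essentially the paper's own proof: the paper takes $\mathcal{B}$ to be exactly your $\mathcal{B}^*$ (the $\psi_H$-maximizers among bipartite graphs on $(V(H),Y)$ with the prescribed number of edges), notes it is closed under compression by Lemma~\ref{lem:compincr}, and invokes Lemma~\ref{lem:family}. You have simply spelled out in more detail why compression preserves the edge count and bipartition and why $\mathcal{B}^*$ inherits closure.
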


\begin{proof}
Consider the family $\mathcal{B}$ of bipartite graphs on $(V(H),Y)$ with the specified number of edges that maximize $\psi_H(B)$. It is closed under compressions by Lemma \ref{lem:compincr}. By Lemma \ref{lem:family}, some $B \in \mathcal{B}$ is a bipartite threshold graph.
\end{proof}

\begin{cor}\label{cor:bluetriangles}For a given red graph $R$, the number of blue triangles at $R$ is at most \[\sum_{v\in R} \binom{d_R(v)}{2} + \frac12 \sum_{v\in R} d_R(v)(s-1-d_R(v)).\]
\end{cor}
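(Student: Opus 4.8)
The plan is to combine the structural result from Lemma~\ref{lem:threshold} with the combinatorial interpretation of $\psi_H$ already recorded after its definition. Recall that $\psi_R(B)$ is an upper bound on the number of blue triangles once we replace the vertices outside $S_T$ by a clique (which only adds red non-edges and hence only adds to the count); so it suffices to bound $\psi_R(B)$ over all bipartite graphs $B$ with bipartition $(V(R),Y)$, where $|V(R)| = s$ and $\sum_{v\in V(R)} d_B(v) = e(B)$. By Lemma~\ref{lem:threshold}, among such graphs with a fixed number of edges the maximum of $\psi_R$ is attained at a bipartite threshold graph $B$.

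First I would unpack what a bipartite threshold graph gives us. Write $Y = \{y_1, y_2, \dots\}$ with $N_B(y_1) \supseteq N_B(y_2) \supseteq \cdots$. The crucial feature is that for each vertex $v \in V(R)$, the set of $y_j$ adjacent to $v$ is an initial segment $\{y_1,\dots,y_{d_B(v)}\}$; consequently, for any two vertices $i, j \in V(R)$, the common blue-neighborhood $N_B(i) \cap N_B(j)$ has size exactly $\min(d_B(i), d_B(j))$. Therefore the second term of $\psi_R(B)$, which counts (over $v \in Y$) pairs $i \ne j$ with $iv, jv \in E(B)$ and $ij \notin E(R)$, equals $\sum_{\{i,j\}\notin E(R)} \min(d_B(i), d_B(j))$, summing over non-edges $ij$ of $R$ (equivalently, edges of $\overline{R}$). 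The first term is just $\sum_{v \in V(R)}\binom{d_B(v)}{2}$, unchanged.

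Next I would bound each piece in terms of $d_R(v)$ alone, which is what the corollary's formula uses. The degree constraint $d_G(v) \le r$ for $v\in S_T$ forces $d_B(v) \le d_R(v)$ (each $v \in S_T$ has $t$ neighbors in $T$, $s-1-d_R(v)$ neighbors in $S_T$, leaving at most $r - t - (s-1-d_R(v)) = d_R(v)$ room for blue edges, using $t + s = r+1$). So $\binom{d_B(v)}{2} \le \binom{d_R(v)}{2}$, giving the first summand of the claimed bound directly. For the cross term, I would use $\min(d_B(i),d_B(j)) \le \tfrac12(d_B(i)+d_B(j)) \le \tfrac12(d_R(i)+d_R(j))$, so that
\[
\sum_{\{i,j\}\notin E(R)} \min(d_B(i),d_B(j)) \le \frac12\sum_{\{i,j\}\notin E(R)} (d_R(i)+d_R(j)) = \frac12\sum_{v\in V(R)} d_R(v)\cdot d_{\overline R}(v),
\]
and since $d_{\overline R}(v) = s - 1 - d_R(v)$ this is exactly $\tfrac12\sum_{v\in R} d_R(v)(s-1-d_R(v))$. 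Adding the two bounds yields the stated inequality.

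The main obstacle, and the step requiring the most care, is the reduction to the threshold case and the identification of the cross-term with $\sum_{\{i,j\}\notin E(R)}\min(d_B(i),d_B(j))$: one must be sure that maximizing $\psi_R$ over bipartite graphs with a \emph{fixed} edge count is legitimate here (it is, since we only want an upper bound and we are free to choose the worst $B$), and that the threshold structure is being read off in the correct partite set — the neighborhoods that are linearly ordered are those of $Y$, which is exactly what makes the $\min$ formula work. Everything after that is the routine convexity/degree-bookkeeping indicated above.
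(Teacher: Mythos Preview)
Your proposal is correct and follows essentially the same route as the paper: identify the blue triangle count with $\psi_R(B)$, invoke Lemma~\ref{lem:threshold} to pass to a bipartite threshold graph, use the nested $Y$-neighborhoods to rewrite the second term as $\sum_{\{i,j\}\notin E(R)}\min(d_B(i),d_B(j))$, and then apply $d_B(v)\le d_R(v)$ together with $\min(a,b)\le\tfrac12(a+b)$. The paper adds the intermediate remark that one may assume $d_B(v)=d_R(v)$ (since adding blue edges only increases $\psi_R$), which cleanly justifies using the degree bound after passing to the threshold maximizer; you should make this step explicit as well, since the threshold graph produced by Lemma~\ref{lem:threshold} is not a priori the blue graph of $G$, and the inequality $d_B(v)\le d_R(v)$ must be argued for it (either via this monotonicity, or by noting that the compressions in $Y$ preserve the $V(R)$-side degrees).
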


\begin{proof}
Let $Y = V(B)\setminus S$. There are two types of blue triangles. \begin{enumerate}
\item One vertex in $R$ and two blue edges at that vertex: There are at most $\sum_{v\in R} \binom{d_B(v)}{2}$ triangles of this type.
\item Two adjacent vertices in $S$ (non-adjacent in $R$) and one blue edge at each of these vertices to the same third vertex outside $S$: There are 
\[
	\sum_{\substack{i,j\in R\\ i\not\sim_R j\\i\ne j}} |\{v \in Y : iv, jv \in E(B)\}| = \sum_{v\in Y}|\{i,j\in R: i \ne j, iv, jv \in E(B), ij \notin E(R)\}|
\]triangles of this type.
\end{enumerate}

The total number of blue triangles at $R$ is exactly $\psi_R(B)$, which by Lemma \ref{lem:threshold} is maximized by some bipartite threshold blue graph $B$. The definition of $R$ and $\Delta(G) \le r$ imply $d_B(v) \le d_R(v)$ for all $v \in R$. Adding edges can only increase the number of blue triangles, so we may assume that $d_B(v) = d_R(v)$ for all $v \in R$. These blue degrees in $R$ completely determine the graph $B$ because the neighborhoods of vertices in $Y$ are nested. Any vertex $i \in R$ has $d_R(i)$ neighbors in $Y$, and a pair $i,j \in R$ has $\min(d_R(i),d_R(j))$ common neighbors in $Y$.

\begin{align*}
\psi_R(B) &= \sum_{v\in R} \binom{d_B(v)}{2} + \sum_{\substack{i,j\in R\\ i\not\sim_R j\\i\ne j}} |\{v \in Y : i \ne j, iv, jv \in E(B)\}|\\
&\le \sum_{v\in R} \binom{d_R(v)}{2} + \sum_{\substack{i,j\in R\\ i\not\sim_R j\\i\ne j}} \min(d_R(i),d_R(j))\\
&\le \sum_{v\in R} \binom{d_R(v)}{2} + \frac{1}{2}\sum_{\substack{i,j\in R\\ i\not\sim_R j\\i\ne j}} (d_R(i)+d_R(j))\\
&= \sum_{v\in R} \binom{d_R(v)}{2} + \frac{1}{2}\sum_{v\in R}d_R(v)(s-1-d_R(v)).\qedhere\end{align*}
\end{proof}

Having given an upper bound on the number of blue triangles, we turn our attention to a lower bound on the number of red triangles gained from folding $G$ at $T$.

\begin{definition}For a graph $R$ with $s$ vertices, we define \[Q(R) = (r+1 -s )e(R) + k_3(R) - \sum_{v\in R} \binom{d_R(v)}{2}.\]\end{definition}

\begin{lemma}\label{lem:QR}If $G$ has a cluster $T$ with $e(B) \ge e(R)$, then $k_3(G_T) - k_3(G) \ge Q(R).$\end{lemma}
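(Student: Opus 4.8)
The plan is to count directly the change in the number of triangles when we fold $G$ at $T$, splitting both the triangles lost and the triangles gained into manageable types. Recall that $G_T = G + \binom{S}{2} - E(B)$. Write $\Delta k_3 = k_3(G_T) - k_3(G)$. Triangles that lie entirely inside $V(G)\setminus(T\cup S)$ and touch none of the deleted blue edges are unaffected, so we may restrict attention to triangles using at least one vertex of $S$ or at least one blue edge. I will organize the lost triangles (those in $G$ but not $G_T$) and the gained triangles (those in $G_T$ but not $G$) and show the net change is at least $Q(R)$.

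First I would catalogue the \textbf{gained} triangles. Every edge we add lies in $\binom{S}{2}$, i.e.\ is a red edge $uv$ of $R$. Such a new red edge $uv$ creates a triangle with every common neighbor of $u$ and $v$ in $G_T$. Since $T\cup S = N[w]$ for each $w\in T$ and $|T\cup S| = r+1$, in $G_T$ every vertex of $T$ is adjacent to both $u$ and $v$; that already gives $|T| = r+1-s$ triangles per red edge, contributing $(r+1-s)e(R)$. Additionally, for each pair of red edges sharing a vertex whose other endpoints are themselves joined by a red edge, we get a triangle inside $R$ itself; but we must be careful not to double count triangles with two or three new edges — handled by inclusion over $K_3(R)$, contributing $k_3(R)$. (One also gains triangles with one new red edge $uv$ and a third vertex in $S$ adjacent in $G$ to both $u$ and $v$; these are an extra nonnegative contribution, and I would simply discard them toward the lower bound, or absorb them if needed.) So the gain is at least $(r+1-s)e(R) + k_3(R)$.

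Next, the \textbf{lost} triangles are exactly those using at least one blue edge, since no other edges are deleted. A blue edge $uv$ with $u\in S$, $v\notin T\cup S$ lies in triangles of three kinds: (i) with a third vertex $w$ adjacent to both $u$ and $v$ where $uw$ is blue too — i.e.\ blue triangles with two blue edges at $v$; (ii) blue triangles with two blue edges at $u$; and (iii) triangles with exactly one blue edge, whose third vertex $w$ satisfies $uw\in E(G)$ non-blue (so $w\in T\cup S$, forcing $w\in S$ as $v\notin N(T)$) and $vw\in E(B)$. A cleaner bookkeeping: each triangle with exactly one blue edge $uv$ has its other two edges at $u$, and since $d_G(u)\le r$ and $u$ has $d_R(u)$ non-neighbors in $S$, the number of non-blue edges at $u$ going to $T\cup S$ is at most $(r) - d_B(u)$; summing $\binom{d_B(u)}{2}$-type and degree-type counts over $u\in S$ and invoking Corollary~\ref{cor:bluetriangles} for the blue-triangle part, the total number of lost triangles is at most $\sum_{v\in R}\binom{d_R(v)}{2}$ after using $d_B(v)\le d_R(v)$ and $e(B)\ge e(R)$ to bound things cleanly. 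Combining, $\Delta k_3 \ge (r+1-s)e(R) + k_3(R) - \sum_{v\in R}\binom{d_R(v)}{2} = Q(R)$.

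The main obstacle I anticipate is the precise accounting of the lost triangles: making sure every triangle destroyed by deleting $B$ is counted once and matched against the right nonnegative quantity, in particular separating the genuinely "blue" triangles (two blue edges) — bounded via the compression/threshold machinery of Corollary~\ref{cor:bluetriangles} — from triangles with a single blue edge, whose third vertex must lie in $S$ and which therefore interact with the red structure. Getting the inequality $d_B(v)\le d_R(v)$ to do exactly the work needed, and confirming that the discarded gained triangles (new red edge plus a third vertex in $S$) are safely nonnegative rather than needed, is where the care goes. Everything else is a straightforward tally against the definition of $Q(R)$.
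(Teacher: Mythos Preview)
Your overall strategy matches the paper's: count gained triangles, bound lost triangles via Corollary~\ref{cor:bluetriangles}, and subtract. But there is a real gap in the accounting.

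You propose to \emph{discard} the gained triangles consisting of a new red edge together with a third vertex in $S$, calling them ``an extra nonnegative contribution.'' That is precisely what you cannot do, because on the loss side you then assert that the total number of lost triangles is at most $\sum_{v\in R}\binom{d_R(v)}{2}$. This bound is false. Corollary~\ref{cor:bluetriangles} gives only
\[
\text{(blue triangles)} \le \sum_{v\in R}\binom{d_R(v)}{2} + \frac12\sum_{v\in R} d_R(v)(s-1-d_R(v)),
\]
and the second term is genuinely present: blue triangles with \emph{two} vertices $i,j\in S$ (adjacent in $G$, hence $ij\notin E(R)$) and a common outside neighbor contribute to it, not to the first sum. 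For a concrete failure, take $R$ a perfect matching, so $\sum_{v}\binom{d_R(v)}{2}=0$, yet such triangles can certainly exist and are lost when you delete $B$.

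The paper's proof keeps the extra gained term. It computes the gain exactly as $te(R)+k_3(R)+\frac12\sum_v d_R(v)(s-1-d_R(v))$ (the last summand counting triangles on $S$ with one or two red edges), applies Corollary~\ref{cor:bluetriangles} for the loss, and observes that the $\frac12\sum_v d_R(v)(s-1-d_R(v))$ terms \emph{cancel}, leaving $Q(R)$. So the ``discarded'' gain is not a safety margin but exactly the counterweight to the part of the loss you omitted. Incidentally, your case (iii) --- ``triangles with exactly one blue edge'' --- does not occur: if $uv$ is blue with $u\in S$ and $v\notin T\cup S$, any common neighbor $w$ forces either $uw$ or $vw$ to be blue, so every lost triangle is a blue triangle in the sense of the corollary.
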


\begin{proof}
We will bound $k_3(G_T) - k_3(G)$ by counting the triangles gained and lost by folding at $T$. Three types of triangles are gained by folding.\begin{enumerate}
\item One red edge and one vertex in $T$: There are $te(R)$ triangles of this type.
\item One or two red edges, and all vertices in $S$: Each triangle of this type contains exactly two unordered pairs of an incident edge and non-edge, and each such pair occurs in a triangle of this type. By counting these pairs and dividing by two, we find there are $\frac12 \sum_{v\in R}d_R(v)(s-1-d_R(v))$ triangles of this type.
\item Three red edges: There are $k_3(R)$ triangles of this type.
\end{enumerate}

Thus the total number of triangles gained from folding is 
\[
    te(R)+\frac12 \sum_{v\in S}d_R(v)(s-1-d_R(v))+k_3(R).
\]
The total number of triangles lost from folding is at most 
\[
    \sum_{v\in R} \binom{d_R(v)}2 + \frac12\sum_{v\in R} d_R(v)(s-1-d_R(v))
\]
by Corollary \ref{cor:bluetriangles}.

The net gain from folding, $k_3(G_T) - k_3(G)$, is at least
\begin{align*}
    te(R)+\frac12 \sum_{v\in S}d_R(v)&(s-1-d_R(v))+k_3(R)
     - \sum_{v\in S} \binom{d_R(v)}{2} - \frac12 \sum_{v\in S} d_R(v)(s-1-d_R(v))\\
    &= te(R)+k_3(R) - \sum_{v\in S} \binom{d_R(v)}{2}
     = Q(R).\qedhere
\end{align*}
\end{proof}

\begin{lemma}\label{lem:half}If $s \le \frac{r+2}{2}$, then $Q(R) \ge 0$, with equality if and only if $R = E_s$.
\end{lemma}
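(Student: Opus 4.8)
The plan is to bound the term $\sum_{v\in R}\binom{d_R(v)}{2}$ crudely from above in terms of $e(R)$ alone, using only that $R$ lives on $s$ vertices. Since every vertex of $R$ has degree at most $s-1$, we have $\binom{d_R(v)}{2}=\frac{d_R(v)(d_R(v)-1)}{2}\le \frac{(s-2)\,d_R(v)}{2}$ for each $v$; summing over $v\in R$ and using $\sum_{v\in R}d_R(v)=2e(R)$ gives
\[
    \sum_{v\in R}\binom{d_R(v)}{2}\le (s-2)\,e(R).
\]

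Substituting this into the definition of $Q(R)$ yields
\[
    Q(R)=(r+1-s)e(R)+k_3(R)-\sum_{v\in R}\binom{d_R(v)}{2}\ \ge\ \bigl((r+1-s)-(s-2)\bigr)e(R)+k_3(R)=(r+3-2s)\,e(R)+k_3(R).
\]
The hypothesis $s\le\frac{r+2}{2}$ is equivalent to $2s\le r+2$, i.e.\ $r+3-2s\ge 1$, so both terms on the right-hand side are nonnegative and hence $Q(R)\ge 0$.

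For the equality statement, observe first that if $R=E_s$ then $e(R)=k_3(R)=0$ and every degree is $0$, so $Q(E_s)=0$ directly from the definition. Conversely, if $Q(R)=0$, then the displayed inequality forces $(r+3-2s)\,e(R)+k_3(R)=0$; since $r+3-2s\ge 1>0$ and $k_3(R)\ge 0$, this is only possible if $e(R)=0$, i.e.\ $R=E_s$.

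There is essentially no obstacle here: the only things to notice are that the trivial degree bound $d_R(v)\le s-1$ is already strong enough, and that the constraint $s\le\frac{r+2}{2}$ is precisely what makes the coefficient $r+3-2s$ of $e(R)$ positive — so that in fact $Q(R)>0$ whenever $R$ contains an edge, which is what the later applications will use (there $\delta(R)\ge 1$, so $R\ne E_s$).
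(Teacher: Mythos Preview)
Your proof is correct. The paper takes a different route: it shows that $Q$ is strictly monotone under edge deletion, namely that for any edge $xy\in E(R)$ one has
\[
Q(R-xy)-Q(R)=-(r+1-s)-|N_R(x)\cap N_R(y)|+d_R(x)+d_R(y)-2=-r-3+s+|N_R(x)\cup N_R(y)|\le -r-3+2s\le -1,
\]
so stripping all edges one at a time yields $Q(R)>Q(E_s)=0$ whenever $e(R)>0$. Your argument instead bounds the whole sum $\sum_v\binom{d_R(v)}{2}$ at once via the crude inequality $d_R(v)-1\le s-2$, arriving at $Q(R)\ge (r+3-2s)e(R)+k_3(R)$ in a single step. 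Both approaches exploit the same numerical threshold $r+3-2s\ge 1$; yours is a bit shorter and avoids the edge-by-edge induction, while the paper's deletion argument makes explicit that $Q$ strictly decreases with each edge removed.
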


\begin{proof}
Note $Q(E_s) = 0$. We will show that $E_s$ is the unique minimizer of $Q(R)$. If $R$ has an edge $xy$, then deleting it would strictly decrease $Q(R)$. We will consider the net change in $Q(R)$ term by term. The change in the $(r+1-s)e(R)$ term is $-(r+1-s)$ since we are losing one edge. The change in the $k_3(R)$ term is $-|N_R(x)\cap N_R(y)|$. The change in the final term is $d_R(x) + d_r(y) - 2$. Therefore
\begin{align*}
Q(R-xy) - Q(R) &= -(r+1-s) - |N_R(x)\cap N_R(y)| + d_R(x) + d_R(y) - 2\\
&= -r-3+s - |N_R(x)\cap N_R(y)| + |N_R(x)| + |N_R(y)|\\
&= -r-3+s + |N_R(x) \cup N_R(y)|\\
&\le -r -3+2s\\
&\le -r -3 + r+2 = -1 < 0.\qedhere
\end{align*}
\end{proof}

\begin{thm}\label{half} If $G \in \cG_C(m,r)$ has a cluster $T$ with $s \le \frac{r+2}{2}$, then $k_3(G) < f_3(m,r)$.\end{thm}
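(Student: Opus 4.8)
The plan is to destroy the cluster $T$ by a single local move that does not decrease (and, generically, strictly increases) the number of triangles, and then to invoke the inductive hypothesis, i.e.\ Conjecture~\ref{conj:allt} for $t=3$ and fewer than $m$ edges. We use maximality of $T$ at the outset: if some $u\in S$ had $d_R(u)=0$ then $u$ would be $G$-adjacent to every vertex of $(T\cup S)\setminus\{u\}$, and $T\cup\{u\}$ would then be a tight clique, contradicting maximality. Hence $\delta(R)\ge 1$, so $e(R)\ge 1$, $s\ge 2$, and $R\ne E_s$; since $s\le\frac{r+2}{2}$, Lemma~\ref{lem:half} gives $Q(R)\ge 1$ (it is an integer, and is positive unless $R=E_s$).

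First suppose $e(B)\ge e(R)$, so we may fold at $T$. By Lemma~\ref{lem:QR}, $k_3(G_T)\ge k_3(G)+Q(R)\ge k_3(G)+1$. In $G_T$ the set $T\cup S$ spans a $K_{r+1}$ and every vertex of $T\cup S$ has all of its $G_T$-neighbours in $T\cup S$ --- the vertices of $T$ already did in $G$, and the vertices of $S$ lose precisely their blue edges in the fold --- so this $K_{r+1}$ is a connected component of $G_T$, say $G_T=K_{r+1}\cup G'$. Since $e(G_T)\le m$ we have $e(G')\le m-\binom{r+1}{2}$ (in particular $a\ge 1$) and $e(G')<m$, so by the inductive hypothesis and the fact that $g_3(\cdot,r)$ is nondecreasing,
\[
 k_3(G')\ \le\ g_3(e(G'),r)\ \le\ g_3\bigl(m-\tbinom{r+1}{2},r\bigr).
\]
As $\binom{r+1}{3}+g_3(m-\binom{r+1}{2},r)=g_3(m,r)$, we conclude
\[
 k_3(G)\ \le\ k_3(G_T)-1\ =\ \tbinom{r+1}{3}+k_3(G')-1\ \le\ g_3(m,r)-1\ <\ f_3(m,r).
\]

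Now suppose $e(B)<e(R)$. Folding is unavailable, so we run it in reverse: delete all $e(B)$ blue edges and add $e(B)$ of the currently-absent edges inside $S$ (that is, $e(B)$ arbitrarily chosen edges of $R$), producing a graph $G^{\ast}$ with $m$ edges. This keeps $\Delta\le r$: each $v\in S$ has $d_G(v)=r-d_R(v)+d_B(v)\le r$, and after the swap has degree $r-d_R(v)+(\text{number of added $R$-edges at }v)\le r$, while no other vertex gains an edge. By Lemma~\ref{s-2} each deleted blue edge lies in at most $s-2$ triangles, so at most $e(B)(s-2)$ triangles are lost; conversely, for each added edge $uv$ (with $u,v\in S$) and each $w\in T$, the triple $\{u,v,w\}$ is a triangle of $G^{\ast}$ absent from $G$ (as $u,v$ stay adjacent to all of $T$), giving at least $e(B)\,|T|=e(B)(r+1-s)$ new triangles. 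Hence
\[
 k_3(G^{\ast})-k_3(G)\ \ge\ e(B)(r+1-s)-e(B)(s-2)\ =\ e(B)(r+3-2s)\ \ge\ e(B),
\]
using $s\le\frac{r+2}{2}$. If $e(B)\ge 1$ this is a strict gain, so $k_3(G)<k_3(G^{\ast})\le f_3(m,r)$. If $e(B)=0$ then no edge leaves $T\cup S$, so connectedness forces $G=K_{r+1}-E(R)$; the Kruskal--Katona theorem then gives $k_3(G)\le k_3(\cC(m))=f_3(m,r)$, with equality only if $G$ is itself the colex graph $\cC(m)$, which is excluded in the setting where this theorem is applied.

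The main obstacle is the case $e(B)<e(R)$. The degree bookkeeping for the reverse move is routine; the real content is the inequality $k_3(G^{\ast})-k_3(G)\ge e(B)(r+3-2s)$ together with the observation that the hypothesis $s\le\frac{r+2}{2}$ is precisely what makes $r+3-2s\ge 1$ --- this is where the bound on $s$ is used. The most delicate point is the sub-case $e(B)=0$: there the move is vacuous, $G$ collapses to $K_{r+1}-E(R)$, and strictness is not obtained from a local move at all but from Kruskal--Katona together with the fact that $G$ is not the colex graph $\cC(m)$.
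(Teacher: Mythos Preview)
Your approach is essentially the paper's: fold when $e(B)\ge e(R)$ (using $Q(R)>0$ from Lemma~\ref{lem:half}), and swap all blue edges for $e(B)$ red ones when $e(B)<e(R)$, gaining at least $e(B)(r+3-2s)\ge e(B)$ triangles. The paper is slightly leaner in two places: in the fold case it needs no inductive hypothesis, since padding $G_T$ with $e(B)-e(R)$ isolated edges gives a graph in $\cG(m,r)$ and hence $k_3(G)<k_3(G_T)\le f_3(m,r)$ directly; and it eliminates your $e(B)=0$ sub-case at the outset by observing that connectedness together with $m\ge\binom{r+1}{2}$ (implicit throughout the paper's applications) already forces $e(B)\ge 1$, so the Kruskal--Katona uniqueness detour is unnecessary.
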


\begin{proof}

As $G$ is connected and $m \ge \binom{r+1}{2}$, there is at least one blue edge.

\begin{case}$0 < e(B) < e(R)$\end{case}

We will delete all of the blue edges and add $e(B)$ of the red edges, which will maintain the number of edges and the bound on the maximum degree. The loss from deleting the blue edges is at most $(s-2)e(B)$ by Lemma \ref{s-2}. The gain from the red edges is at least $te(B)$, as each red edge has $t$ common neighbors in $T$. Therefore the net gain is at least\begin{align*}
te(B) - (s-2)e(B) &= (t-s+2)e(B)\\
&=(r+1-2s+2)e(B) = (r+3 - 2s)e(B)\\
&\ge (r+3- (r+2))e(B)\\
&= e(B) > 0,\end{align*}
so the graph was not extremal.

\begin{case}$e(B) \ge e(R)$\end{case}

By Lemmas \ref{lem:QR} and \ref{lem:half}, since $\delta(R) \ge 1$, folding $G$ at $T$ strictly increases the number of triangles, so $G$ is not extremal.
\end{proof}

\begin{lemma}\label{lem:matching}If $G \in \cG_C(m,r)$ has a cluster $T$ with $\Delta(R) \le 1$, then $k_3(G) < f_3(m,r)$.\end{lemma}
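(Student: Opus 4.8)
The plan is to handle the case $\Delta(R) \le 1$ by a direct local argument, splitting on whether $e(B) \ge e(R)$ or $e(B) < e(R)$, much as in the proof of Theorem~\ref{half}. First consider the case $e(B) \ge e(R)$. Since $\Delta(R) \le 1$ and $\delta(R) \ge 1$ (by maximality of $T$), the red graph $R$ is a nonempty perfect matching on $S$, so $s$ is even, $e(R) = s/2$, and every vertex has $d_R(v) = 1$. I would compute $Q(R)$ directly from the definition: $Q(R) = (r+1-s)e(R) + k_3(R) - \sum_{v \in R}\binom{d_R(v)}{2} = (r+1-s)\cdot \tfrac s2 + 0 - 0 = \tfrac s2 (r+1-s)$. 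This is strictly positive precisely when $2 \le s \le r$, which holds because $S$ is nonempty (indeed $s \ge 2$ since $R$ has an edge) and $s \le r$ (as $t \ge 1$ and $t + s = r+1$). Hence by Lemma~\ref{lem:QR}, folding $G$ at $T$ strictly increases $k_3$, so $k_3(G) < k_3(G_T) \le g_3(e(G_T),r) \le g_3(m,r) = f_3(m,r)$, using that the Main Theorem (equivalently Conjecture~\ref{conj:allt} for $t=3$) is being proved by induction on $m$ and that $g_3(m,r) = f_3(m,r)$ once we know the conjecture. (One should be slightly careful here about whether $f_3(m,r) = g_3(m,r)$ is available inductively, or whether one simply wants the weaker conclusion $k_3(G) < f_3(m,r)$; I would state it using the inductive hypothesis that Conjecture~\ref{conj:allt} holds for smaller numbers of edges, exactly as in the folding discussion at the end of Section~\ref{sec:clusters}.)

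Now consider the case $0 < e(B) < e(R)$. As in Theorem~\ref{half}, I would delete all $e(B)$ blue edges and add $e(B)$ red edges (possible since $e(B) < e(R)$), which preserves the edge count and keeps $\Delta \le r$: adding a red edge $uv$ with $u,v \in S$ raises $d_G(u), d_G(v)$ by one, but each such vertex lost at least one blue edge, and we can arrange the swap so no vertex ends above degree $r$ — more carefully, since $R$ is a matching, adding the red edges one at a time and pairing each with the deletion of a blue edge incident to one of its endpoints keeps all degrees bounded. The loss from deleting a blue edge is at most $s-2$ by Lemma~\ref{s-2}, and the gain from adding a red edge $uv$ is at least $t$ (the $t$ vertices of $T$ are common neighbors of $u$ and $v$). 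So the net change is at least $(t - (s-2))e(B) = (r+1-s-s+2)e(B) = (r+3-2s)e(B)$. This is positive when $s \le (r+2)/2$, which is already covered by Theorem~\ref{half}; the genuinely new content is the range $s > (r+2)/2$, where $r+3-2s$ may be nonpositive and this crude bound fails.

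So the main obstacle is the case $0 < e(B) < e(R)$ with $s$ large (say $s \ge (r+3)/2$). Here the swap-one-red-for-one-blue estimate is too lossy, and I expect one needs a sharper accounting. The key extra structure to exploit is that $R$ is a matching: the red edges are vertex-disjoint, so adding \emph{all} of $R$ (not just $e(B)$ edges) would complete $T \cup S$ into $K_{r+1}$ and gain exactly $te(R) + k_3(R) = t \cdot \tfrac s2$ triangles (there are no triangles among three red edges since $R$ is a matching, and the "type (2)" triangles with two vertices in $S$ require two red edges sharing a vertex, impossible), while the blue edges destroyed contribute at most $(s-2)e(B)$ — but we can only do this honestly if $e(B) \ge e(R)$, landing us back in the folding case. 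The right move in the deficient-blue regime is probably to combine a partial fold with a careful use of Lemma~\ref{s-2} together with the observation that when $\Delta(R) \le 1$ each vertex of $S$ has $d_B(v) \le d_R(v) \le 1$, so $e(B) \le s/2 = e(R)$ with equality forcing $e(B) = e(R)$; thus $e(B) < e(R)$ forces $e(B) \le s/2 - 1$, and moreover at least one vertex of $S$ has red degree $1$ but blue degree $0$. I would use such a vertex (and the red edge at it) to exhibit an explicit local improvement: add that red edge for a gain of $\ge t$ while destroying no blue edges, contradicting extremality outright. That single observation — there must be a red edge both of whose endpoints are blue-isolated, or else $e(B) \ge e(R)$ — is what I'd expect to make the hard case collapse, and pinning it down rigorously (the counting $e(B) \le \sum_v d_B(v)/2 \le \sum_v d_R(v)/2 = e(R)$, with the inequality strict unless $B$ saturates every red edge) is the crux of the argument.
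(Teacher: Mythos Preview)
Your folding case ($e(B) \ge e(R)$) is correct and matches the paper exactly: since $\delta(R)\ge 1$ and $\Delta(R)\le 1$, $R$ is a perfect matching on $S$, so $Q(R) = (r+1-s)\cdot \tfrac{s}{2} = \tfrac{st}{2} > 0$, and Lemma~\ref{lem:QR} finishes.

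In the case $0 < e(B) < e(R)$ you correctly locate the key observation---some red edge has both endpoints blue-isolated---but the execution has two real gaps. First, your counting ``$e(B) \le \sum_v d_B(v)/2$'' is off by a factor of two: blue edges have exactly one endpoint in $S$, so $e(B) = \sum_{v\in S} d_B(v)$, not half that; in particular $d_B(v)\le 1$ only gives $e(B)\le s$, not $e(B)\le s/2$. The desired conclusion still holds, but for a different reason: each vertex with $d_B(v)=1$ lies in a unique red edge (since $R$ is a matching), so at most $e(B) < e(R)$ red edges meet a blue edge, and some red edge $uv$ is missed entirely. Second, and more seriously, you never compute the weight of this red edge or clearly state the move. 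Since $d_B(u)=d_B(v)=0$ and $d_R(u)=d_R(v)=1$, both $u$ and $v$ have neighborhood exactly $T\cup(S\setminus\{u,v\})$, so $w(uv)=r-1$. The paper's move is then to add $uv$ \emph{and delete any one blue edge}; by Lemma~\ref{s-2} the net gain is at least $(r-1)-(s-2)=t\ge 1$. Your phrase ``add that red edge for a gain of $\ge t$ while destroying no blue edges'' is problematic on either reading: if no blue edge is deleted you have changed the edge count and cannot conclude $k_3(G)<f_3(m,r)$, while if you do delete a blue edge but only use the crude bound $w(uv)\ge t$ (common neighbors in $T$ alone), the swap gives net gain $\ge t-(s-2)=r+3-2s$, which is precisely the estimate you already observed fails for large $s$. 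The weight-$(r-1)$ computation is the missing ingredient that makes this case work uniformly in $s$.
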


\begin{proof}Any red graph has $\delta(R) \ge 1$, so $R = \frac{s}{2}K_2$.

\begin{case}$0 < e(B) < e(R)$\end{case}

There is a red edge that is not incident to any blue edges. Add that red edge, and delete a blue edge. The blue edge is in at most $s-2$ triangles by Lemma \ref{s-2}. The red edge has weight $r-1$. The net gain in number of triangles is at least $r-1 -(s-2) = r+1-s \ge 1$, so $k_3(G) < f_3(m,r)$.

\begin{case}$e(R) \le e(B)$\end{case}

By Lemma \ref{lem:QR}, $k_3(G_T) - k_3(G) \ge Q(\frac{s}{2}K_2) = (r+1-s)(\frac{s}{2}) + 0 - 0 = st/2 > 0$, so $k_3(G) < f_3(m,r)$.
\end{proof}

\begin{lemma}\label{lem:D2}If $\Delta(R) \le 2$, then $Q(R) \ge 0$, with equality only when $t=1$ and $R$ is a disjoint union of non-triangle cycles.\end{lemma}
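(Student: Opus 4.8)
The plan is to mimic the proof of Lemma~\ref{lem:half}: start from an arbitrary $R$ with $\Delta(R)\le 2$, delete edges one at a time, track the change in $Q$, and conclude that $Q$ is minimized on graphs where no deletion helps --- and identify those. Recall that when we delete an edge $xy$ from $R$, the computation in Lemma~\ref{lem:half} gives
\[
    Q(R-xy) - Q(R) = -(r+1-s) - \abs{N_R(x)\cap N_R(y)} + d_R(x) + d_R(y) - 2 = -(r+3) + s + \abs{N_R(x)\cup N_R(y)}.
\]
Since $\Delta(R)\le 2$, we have $\abs{N_R(x)\cup N_R(y)}\le 4$, with equality only if $d_R(x)=d_R(y)=2$ and $N_R(x)\cap N_R(y)=\emptyset$, i.e. $xy$ lies on a cycle of length $\ge 4$ in $R$ (recall $t = r+1-s$, so the sign of $s - (r+3) + 4 = 1 - t$ is exactly what decides whether the move is neutral, harmful, or helpful).

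First I would handle the case $t\ge 2$ (equivalently $s\le r-1$): then $-(r+3)+s+\abs{N_R(x)\cup N_R(y)}\le 1 - t \le -1 < 0$ for \emph{every} edge $xy$, so deleting any edge strictly decreases $Q$; iterating down to $E_s$ (which has $\delta = 0$, so is not itself a valid red graph unless $s=0$) shows $Q(R) > Q(E_s) = 0$ whenever $R$ has an edge, hence $Q(R)\ge 1 > 0$ in this subcase with no equality. Next, the case $t=0$ (i.e. $s=r+1$): here $-(r+3)+s+\abs{N_R(x)\cup N_R(y)} = \abs{N_R(x)\cup N_R(y)} - 2 \ge 0$ always, so deletions never help, and in fact $Q(R) = e(R) + k_3(R) - \sum_v\binom{d_R(v)}{2}$; I would argue directly that this is nonnegative when $\Delta(R)\le 2$ (a disjoint union of paths and cycles), noting $\sum_v \binom{d_R(v)}{2}$ counts paths of length $2$ and is at most $e(R)$ unless every component is a triangle, in which case the extra $k_3(R)$ term compensates --- but this subcase contributes strictly positive $Q$ as soon as $R$ has any edge on a path or a cycle of length $\ge 4$, and even for a union of triangles $Q = 3 + 1 - 3 = 1 > 0$ per triangle, so again no equality. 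The remaining, delicate case is $t=1$ ($s=r$): then $Q(R-xy)-Q(R) = \abs{N_R(x)\cup N_R(y)} - 3$, which is $\le 1$ and is $\ge 0$ exactly when $xy$ lies on a cycle of length $\ge 4$, and is $< 0$ (strictly helpful to delete) exactly when $x$ or $y$ has degree $1$ or $xy$ lies in a triangle.

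So for $t=1$ I would reduce as follows: if $R$ has a vertex of degree $\le 1$ on some edge, or contains a triangle, delete a suitable edge to strictly decrease $Q$; the only graphs on which \emph{no} deletion strictly decreases $Q$ are $2$-regular graphs with no triangle, i.e. disjoint unions of cycles of length $\ge 4$. On such a graph every deletion is exactly neutral (contributes $0$), so $Q(R)$ equals $Q$ of the empty graph $E_s$, which is $0$; hence $Q(R) = 0$ precisely for $t=1$ and $R$ a disjoint union of non-triangle cycles, and $Q(R) > 0$ otherwise, which is the claim. I expect the main obstacle to be bookkeeping in the $t=1$ reduction: one must check that whenever $R$ is not $2$-regular-triangle-free there is genuinely an edge whose deletion is \emph{strictly} beneficial (e.g. an edge incident to a degree-$1$ vertex, or --- if $R$ is $2$-regular but has a triangle --- an edge of that triangle, for which $\abs{N_R(x)\cup N_R(y)} = 3$, giving change $0$; so one must instead observe a triangle component has $Q = 1$ on its own and peel it off, or note $\delta(R)\ge 1$ forbids isolated vertices so the empty-graph endpoint is only reached through neutral moves on cycles). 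Care is also needed that the inductive edge-deletion stays within graphs of maximum degree $\le 2$, which is automatic since deleting edges cannot raise degrees.
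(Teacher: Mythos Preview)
Your edge-deletion strategy has a computational slip that cascades into a genuine gap at $t=1$. The change on deleting $xy$ is
\[
Q(R-xy)-Q(R) \;=\; -(r+3)+s+\abs{N_R(x)\cup N_R(y)} \;=\; -(t+2)+\abs{N_R(x)\cup N_R(y)},
\]
so its maximum is $2-t$, not $1-t$; your ``strictly negative for $t\ge 2$'' thus only holds for $t\ge 3$. More importantly, at $t=1$ the change is $\abs{N_R(x)\cup N_R(y)}-3$, and with $\Delta(R)\le 2$ the actual values are: $+1$ if both endpoints have degree $2$ and $xy$ is not in a triangle (e.g.\ any edge of a $C_\ell$ with $\ell\ge 4$, or an internal edge of a long path); $0$ if $xy$ lies in a triangle or if exactly one endpoint has degree $1$; and $-1$ only when \emph{both} endpoints have degree $1$, i.e.\ $xy$ is an isolated $K_2$. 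So your claim that an edge incident to a degree-$1$ vertex is ``strictly helpful'' is false (it is neutral unless the other endpoint also has degree $1$), and on a $2$-regular triangle-free $R$ every deletion \emph{increases} $Q$ by $1$ rather than being ``exactly neutral''. Hence you cannot monotonically descend from such an $R$ to $E_s$, and graphs like $P_3$ admit no strictly-decreasing single deletion at all, contradicting your characterization of the stuck graphs.

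The paper sidesteps all of this with a direct count: since $\delta(R)\ge 1$ and $\Delta(R)\le 2$, writing $k$ for the number of degree-$2$ vertices gives $e(R)=(s+k)/2$ and $\sum_v\binom{d_R(v)}{2}=k$, whence
\[
Q(R)=\frac{st}{2}+k\Bigl(\frac{t}{2}-1\Bigr)+k_3(R),
\]
which for $t\ge 2$ is positive term-by-term, and for $t=1$ equals $(s-k)/2+k_3(R)\ge 0$ with equality exactly when $k=s$ and $k_3(R)=0$. If you want to salvage your approach, the cleanest fix is to note that for fixed $t$ the quantity $t\,e(R)+k_3(R)-\sum_v\binom{d_R(v)}{2}$ is additive over components, and then compute it directly on paths ($=1$), triangles ($=1$), and longer cycles ($=0$); the edge-deletion induction as you wrote it does not go through.
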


\begin{proof}Any red graph has $\delta(R) \ge 1$. Let $k$ be the number of vertices with red degree 2, so $s-k$ is the number of vertices with red degree 1. By the degree sum formula, $2e(R) = 2k + (s-k) = s+k$.

\begin{align*}
Q(R) &= (r+1-s)e(R) + k_3(R) - \sum_{v\in R}\binom{d_R(v)}{2}\\
&\ge t\frac{s+k}{2} + k_3(R) - k\\
&= st/2 + k(t/2-1) + k_3(R).
\end{align*}
For $t \ge 2$, each term of this last expression is non-negative, and $st/2 >0$, so $Q(R) > 0$. For $t = 1$, 
\[
st/2 + k(t/2-1) +k_3(R)= s/2 -k/2+k_3(R) = (s-k)/2+k_3(R) \ge 0,
\]
with equality only when all vertices of $R$ have red degree 2 and there are no triangles, i.e. $R$ is a disjoint union of non-triangle cycles.
\end{proof}

\begin{lemma}\label{lem:D2b}If $G \in \cG_C(m,r)$ has a cluster $T$ with $\Delta(R) \le 2$ and $t \ge 2$, then $k_3(G) < f_3(m,r)$.\end{lemma}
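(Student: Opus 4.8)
The plan is to show that a graph $G$ satisfying the hypotheses is never extremal, by producing inside the cluster $T$ either a folding or a small swap of blue edges for red edges that strictly increases $k_3$ while keeping the edge count fixed and $\Delta(G)\le r$. As in the earlier lemmas I work under the standing assumption $m\ge\binom{r+1}{2}$; together with connectedness this forces $e(B)\ge 1$, since $e(B)=0$ would give $V(G)=T\cup S_T$ and hence $m=\binom{r+1}{2}-e(R)<\binom{r+1}{2}$. The argument splits on the sign of $e(B)-e(R)$.

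If $e(B)\ge e(R)$, I fold at $T$. By Lemma~\ref{lem:QR} and Lemma~\ref{lem:D2} we get $k_3(G_T)-k_3(G)\ge Q(R)>0$, since the equality case of Lemma~\ref{lem:D2} requires $t=1$ whereas here $t\ge 2$. Because $G_T$ contains a $K_{r+1}$ component and $e(G_T)\le m$, the folding mechanism of Section~\ref{sec:clusters} (induction on $m$, via Corollary~\ref{connected}) gives $k_3(G_T)\le g_3(m,r)$, so $k_3(G)<g_3(m,r)\le f_3(m,r)$. This case is routine.

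If $0<e(B)<e(R)$, the move is to delete all $e(B)$ blue edges and add $e(B)$ red edges. After the deletion every $v\in S$ has degree $r-d_R(v)\le r-1$, so adding at most $d_R(v)$ red edges at each vertex keeps $\Delta\le r$, and the result lies in $\cG(m,r)$ for any such choice. Deleting the blue edges loses at most $(s-2)e(B)$ triangles by Lemma~\ref{s-2}. Adding a red edge $xy$ to the blue-free graph creates $|N(x)\cap N(y)|\ge t+\bigl(s-d_R(x)-d_R(y)\bigr)\ge t+s-4=r-3$ new triangles (inclusion-exclusion inside $S\setminus\{x,y\}$ for the common neighbours in $S$, plus the $t$ vertices of $T$), and later additions only raise subsequent weights; so any choice of $e(B)$ red edges gives net change at least $\bigl(r-3-(s-2)\bigr)e(B)=(t-2)e(B)$. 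For $t\ge 3$ this is positive and we are done, so I am left with $t=2$, where $s=r-1$ and the estimate only yields net change $\ge 0$; here the red edges must be chosen with care. Recall $R$ is a disjoint union of paths and cycles, with $Q(R)=(r-1)+k_3(R)$. If $R$ has a path component or a triangle component, it contains a red edge $xy$ which, once added, lies in at least $s-1$ triangles: a pendant edge of a path, for which $d_R(x)+d_R(y)\le 3$, or an edge of a $C_3$, for which $N_S(x)=N_S(y)$. Including that edge among the ones added then gives net change at least $(s-1)+(e(B)-1)(s-2)-(s-2)e(B)=1$. Otherwise $R$ is a disjoint union of cycles of length at least $4$, so $e(R)=s=r-1$ and $e(B)\le r-2$. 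If $e(B)=1$, the single blue edge has an endpoint $x$ with $d_R(x)=2>1=d_B(x)$, so by the proof of Lemma~\ref{s-2} it lies in at most $s-3$ triangles while any added red edge gives at least $s-2$: net at least $1$. If $e(B)\ge 2$, I choose the added red edges to form a union of whole cycles of $R$ together with one contiguous arc of a further cycle; adding a contiguous arc of $j$ edges along a cycle of length at least $4$ creates at least $j(s-2)+(j-1)$ triangles (each arc edge after the first completes an extra triangle together with the arc edge added just before it), and a whole cycle of length $\ell$ creates at least $\ell(s-2)+\ell$. Since $e(B)<e(R)$ such a selection exists, and whenever it uses a whole cycle or the arc has at least $2$ edges, the surplus over the loss $(s-2)e(B)$ is at least $1$.

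In every case we obtain a graph in $\cG(m,r)$ with strictly more triangles than $G$, hence $k_3(G)<f_3(m,r)$. The main obstacle is precisely the subcase $t=2$ with $R$ a union of long cycles: there every red edge lies in exactly $s-2$ triangles and a blue edge may lie in $s-2$ as well, so the naive swap breaks even and the strict gain must be squeezed out of the interaction among several consecutive red edges placed along a cycle. I expect the careful bookkeeping of that interaction, together with disposing of the degenerate cases $e(B)=1$ and single-edge arcs, to be where most of the work lies; the rest follows the template of Theorem~\ref{half} and Lemma~\ref{lem:matching}.
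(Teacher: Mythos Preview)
Your proof is correct, but in the case $0<e(B)<e(R)$ you take a considerably more laborious route than the paper does. You delete all $e(B)$ blue edges at once, add $e(B)$ carefully chosen red edges, and compute a strict net gain directly; this is immediate for $t\ge 3$ but forces you into a substantial case analysis when $t=2$ (paths/triangles in $R$ versus unions of long cycles, and within the latter, $e(B)=1$ versus $e(B)\ge 2$, with a tailor-made ``contiguous arc'' selection to squeeze out the extra $+1$). All of that bookkeeping checks out.

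The paper sidesteps the whole $t=2$ analysis. It observes by an averaging argument that, since $e(B)<e(R)$ and $\Delta(R)\le 2$, there is a red edge $xy$ incident to at most one blue edge. Any such red edge has weight at least $r-3$ in $G$ (since $x$ and $y$ each have at most one other $R$-neighbour, so at least $r+1-4$ vertices of $T\cup S$ are common neighbours), while any blue edge has weight at most $s-2$. Swapping one blue edge for $xy$ therefore gives net gain $\ge r-3-(s-2)=t-2\ge 0$, a \emph{weak} increase that reduces $e(R)$ by one. One then simply inducts on $e(R)$, with base case $e(R)=1$ handled by Lemma~\ref{lem:matching}. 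The induction absorbs the need for a strict inequality at each step, so no special treatment of $t=2$ is required.

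In short: your bulk-swap argument is self-contained but pays for it in casework; the paper's single-swap-plus-induction is shorter and cleaner, trading the explicit strict gain for a descent on $e(R)$.
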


\begin{proof}
There is at least one blue edge because $G$ is connected and $m \ge \binom{r+1}{2}$. By Lemma \ref{lem:D2}, $Q(R) > 0$, so if $e(B) \ge e(R)$, then $k_3(G) < f_3(m,r)$. Now suppose $0 < e(B) < e(R)$.

We will show that there is a red edge that is incident to at most one blue edge. The average number of blue edges at a given red edge is
\begin{align*}
\frac{1}{e(R)}\sum_{f \in E(R)} |\{g \in E(B) : f\cap g \neq \emptyset\}| 
&= \frac{1}{e(R)}\sum_{g \in E(B)} |\{f \in E(R) : f\cap g \neq \emptyset\}|\\
&\le \frac{1}{e(R)}\sum_{g \in E(B)} 2\\
&= \frac{2e(B)}{e(R)} <2.
\end{align*}
Thus there is a red edge $xy$ incident to at most one blue edge. The vertices $x$ and $y$ may each have one other neighbor in $R$. The remaining $r+1-2-2 = r-3$ vertices in the cluster are neighbors of both $x$ and $y$ in $G$, so $xy$ has weight at least $r-3$. The blue edge has weight at most $s-2$ by Lemma \ref{s-2}. Deleting the blue edge (or any blue edge, if there is none at $xy$) and adding the red edge $xy$ yields a net gain of at least $r-3 - (s-2) = r-s -1 = t-2 \ge 0$ triangles since $t \ge 2$, a weak increase that reduces the number of red edges. By induction on the number of red edges, we are done. (The base case $e(R) = 1$ is done by Lemma \ref{lem:matching}.)
\end{proof}

\begin{thm}\label{thm:s=2}If $G \in \cG_C(m,r)$ for $r \ge 3$ has a cluster with $e(R) \in \{1,2\}$, then $k_3(G) < f_3(m,r)$.\end{thm}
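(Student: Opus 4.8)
The plan is to sort the possible red graphs $R$ by their number of edges, dispatch all but one configuration to lemmas already proved in this section, and treat the last one by a hands-on local move. Since $T$ is maximal, $\delta(R)\ge 1$, so $R$ has no isolated vertices; combined with $e(R)\in\{1,2\}$ this forces $R\in\{K_2,P_3,2K_2\}$, with $s=2,3,4$ respectively. Three of these four possibilities are handled at once: if $R=K_2$ then $s=2\le\frac{r+2}{2}$ (as $r\ge 3$) and Theorem~\ref{half} applies; if $R=2K_2$ then $\Delta(R)=1$ and Lemma~\ref{lem:matching} applies; and if $R=P_3$ with $r\ge 4$ then $t=r+1-s=r-2\ge 2$ and $\Delta(R)=2$, so Lemma~\ref{lem:D2b} applies (as does Theorem~\ref{half}, since then $s=3\le\frac{r+2}{2}$). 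The only remaining case is $r=3$, $R=P_3$, $s=3$, $t=1$; write $T=\{u\}$, so that $N[u]=T\cup S$ has $4$ vertices and $G[T\cup S]$ has $\binom{4}{2}-2=4$ edges. Because $u$ has all its neighbors in $T\cup S$ and every edge from $S$ to the outside is blue, if $e(B)=0$ then $V(G)=T\cup S$ and $m=4<\binom{4}{2}$, contradicting $m\ge\binom{r+1}{2}$; hence $e(B)\ge 1$.

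If $e(B)\ge 2=e(R)$, I would fold $G$ at $T$. As $\Delta(R)=2$, Lemma~\ref{lem:D2} gives $Q(R)\ge 0$, with equality only when $R$ is a disjoint union of cycles of length at least $4$; since $R=P_3$ is not of this form, $Q(R)>0$, so Lemma~\ref{lem:QR} gives $k_3(G_T)\ge k_3(G)+Q(R)>k_3(G)$. As $G_T\in\cG(e(G_T),r)$ with $e(G_T)\le m$, this yields $k_3(G)<k_3(G_T)\le f_3(m,r)$.

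If $e(B)=1$, let $pq$ be the unique blue edge, with $p\in S$ and $q\notin T\cup S$. I would first observe that $pq$ is in no triangle: the only neighbor of $q$ inside $T\cup S$ is $p$, while every neighbor of $p$ lies in $(T\cup S)\cup\{q\}$, so $p$ and $q$ have no common neighbor, i.e.\ $w(pq)=0$. Since $\delta(R)\ge 1$, $p$ has a red neighbor $p'\in S$; set $G'=G-pq+pp'$, which also has $m$ edges. Here $p'$ carries no blue edge (the only one is at $p\ne p'$), so $d_G(p')=t+(s-1-d_R(p'))=r-d_R(p')\le r-1$, giving $d_{G'}(p')\le r$; as no other degree increases, $G'\in\cG(m,r)$. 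Deleting $pq$ destroys no triangle, while $pp'$ lies in the new triangle $upp'$ of $G'$; hence $k_3(G')\ge k_3(G)+1$, so $k_3(G)<f_3(m,r)$.

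The main obstacle will be the case $r=3$, $R=P_3$. The generic tools break down there: $s=3>\frac{r+2}{2}$ and $t=1<2$ put it outside Theorem~\ref{half} and Lemma~\ref{lem:D2b}, and the usual trade of a red edge of weight $\ge r-3$ for a blue edge of weight $\le s-2$ (via Lemma~\ref{s-2}) is only a net change of $t-2=-1$. The argument is saved by the sharper fact that when $e(B)=1$ the single blue edge lies in \emph{no} triangle, so trading it for a red edge of weight only $\ge t=1$ is a strict gain; the one point needing care is verifying that this trade keeps the maximum degree at most $r$, which is why the red endpoint $p'$ must be chosen (automatically here) to carry no blue edge.
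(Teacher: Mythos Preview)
Your proof is correct and follows the same overall strategy as the paper: enumerate the possible red graphs $R$ with $e(R)\in\{1,2\}$ and $\delta(R)\ge 1$, dispatch most of them to previously proved lemmas, and handle the remaining case by folding (when $e(B)\ge e(R)$) or by a direct edge trade (when $e(B)<e(R)$).

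The execution differs in two minor respects. First, for $R=K_2$ the paper cites Lemma~\ref{lem:matching} while you cite Theorem~\ref{half}; both are valid. Second, for $R=P_3$ the paper gives a single argument covering all $r\ge 3$: when $e(B)=1$ it trades the blue edge (weight $\le s-2=1$) for a red edge (weight $\ge r-2$), obtaining a \emph{weak} gain of $r-3\ge 0$ and then reducing to the case $R=K_2$; when $e(B)\ge 2$ it folds and computes $Q(P_3)=2r-5\ge 1$ directly. You instead split off $r\ge 4$ via Lemma~\ref{lem:D2b}/Theorem~\ref{half}, and for the residual case $r=3$ prove the sharper fact $w(pq)=0$, which gives a \emph{strict} gain immediately and avoids the reduction step. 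Your route is slightly longer in its case analysis but cleaner at the one delicate point ($r=3$, $e(B)=1$), where the paper's weak-gain-plus-reduction argument requires checking that the post-trade graph still has a suitable cluster.

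Both proofs share the implicit standing assumption $m\ge\binom{r+1}{2}$ used to force $e(B)\ge 1$; this is how the lemma is applied in the proof of the Main Theorem, though it is not stated in the hypotheses.
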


\begin{proof}
If $e(R) = 1$, then $R = K_2$, and by Lemma \ref{lem:matching} we're done. 
If $e(R) = 2$, then $R = 2K_2$ or $R = P_3$. For $R = 2K_2$, we are again done by Lemma \ref{lem:matching}.

Consider $R = P_3$. $G$ is connected and $m \ge \binom{r+1}{2}$, so $e(B) \ge 1$. Any blue edges have weight at most $s-2=1$ by Lemma \ref{s-2}. Each red edge has weight $r-2$. If $e(B) = 1$, delete the blue edge and add a red edge for a net gain of at least $r-2 - 1 = r-3$ triangles, a weak increase for $r \ge 3$. This reduces to the case $R = K_2$.

Otherwise, $e(B) \ge 2 = e(R)$, so we may fold. $Q(P_3) = (r+1-3)2+0-1 = 2r-5 \ge 1$ for $r \ge 3$, so $k_3(G) < f_3(m,r)$ by Lemma \ref{lem:QR}.
\end{proof}

\begin{thm}\label{thm:e=3}If $G \in \cG_C(m,r)$ for $r \ge 7$ has a cluster with $e(R)=3$, then $k_3(G) < f_3(m,r)$.\end{thm}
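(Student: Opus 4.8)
The plan is to follow the same template as Theorems~\ref{thm:s=2} and~\ref{thm:e=3}'s predecessors: enumerate the possible red graphs $R$ with $e(R)=3$, and in each case either fold (when $e(B)\ge e(R)$) using the bound $Q(R)\ge 0$ from Lemma~\ref{lem:QR}, or apply a local swap of blue edges for red edges (when $0<e(B)<e(R)$). Since $e(R)=3$ and $\delta(R)\ge 1$, the possibilities for $R$ are $3K_2$, $K_2\cup P_3$, $P_4$, $K_{1,3}$, and $K_3$. The cases $\Delta(R)\le 1$ (i.e.\ $3K_2$) are already handled by Lemma~\ref{lem:matching}, and $\Delta(R)\le 2$ with $t\ge 2$ (covering $K_2\cup P_3$, $P_4$, and also $K_3$ when $t\ge 2$) is handled by Lemma~\ref{lem:D2b}; so the work that remains is $K_{1,3}$ for all $t$, $K_3$ with $t=1$, and the small-$t$ residue of the $\Delta(R)\le 2$ graphs (namely $t=0$ or $t=1$, where $t=0$ means $s=r+1$ and $G$ already contains $K_{r+1}$, hence is disconnected, contradicting connectedness — so really only $t=1$ matters).

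For the folding direction, I would compute $Q(R)$ for each remaining $R$ using $Q(R)=(r+1-s)e(R)+k_3(R)-\sum_v\binom{d_R(v)}{2}$ with $e(R)=3$. For $K_{1,3}$ we have $s=4$, so $t=r-3$, $k_3=0$, and $\sum\binom{d_R(v)}{2}=\binom{3}{2}=3$, giving $Q=3(r-3)-3=3r-12$, which is positive for $r\ge 5$ and in particular for $r\ge 7$. For $K_3$ with $t=1$ (so $s=3$), $k_3(R)=1$ and $\sum\binom{d_R(v)}{2}=3$, giving $Q=(r-2)\cdot 3+1-3=3r-8>0$. For $P_4$ with $t=1$ ($s=4$), $Q=(r-3)\cdot 3+0-(1+1+1+1)\cdots$ wait, $\sum\binom{d_R(v)}{2}=2\binom{2}{2}=2$, so $Q=3(r-3)-2=3r-11>0$; similarly $K_2\cup P_3$ with $s=5$ gives $Q=3(r-4)+0-1=3r-13$, positive for $r\ge 7$. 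So whenever $e(B)\ge e(R)$, Lemma~\ref{lem:QR} finishes the case.

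The harder direction is $0<e(B)<e(R)=3$, i.e.\ $e(B)\in\{1,2\}$, where we must perform an explicit swap. The strategy parallels Lemma~\ref{lem:D2b}: find a red edge $xy$ incident to few blue edges, bound its weight from below (it has $t$ common neighbors in $T$, plus possibly more in $S$), bound the weight of a deleted blue edge from above by $s-2$ via Lemma~\ref{s-2}, and show the net change is nonnegative while strictly decreasing $e(R)$, then induct down to the base cases $e(R)\in\{1,2\}$ (Theorem~\ref{thm:s=2}). With $e(R)=3$ and $e(B)\le 2$, a red edge incident to at most one blue edge must exist when $e(R)=3>e(B)$ by the same averaging/pigeonhole argument (or because $B$ has at most two edges touching at most four vertices while $R$ has three edges). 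The delicate subcase is $K_{1,3}$, where the center vertex has red-degree~$3$ and a chosen red edge $xy$ may have $y$ a leaf but $x$ the center with two other red neighbors and hence potentially two incident blue edges; here I would either pick a red edge incident to $0$ blue edges (possible when $e(B)=1$, since the blue edge meets at most $2$ of the $3$ red edges), or when $e(B)=2$ argue directly. The main obstacle I anticipate is getting the weight bounds to close for the \emph{smallest} value $r=7$ in the $K_{1,3}$ swap subcase, since there $t=r-s=3$ is small and a red edge of $K_{1,3}$ other than through the center gives weight only about $t+(\text{neighbors in }S)$; I expect one may need the extra common neighbors that the three leaves of $K_{1,3}$ share in $S$ (they are mutually adjacent in $G$), which should push the red-edge weight above the blue-edge weight $s-2=2$. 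If a single swap does not suffice in an isolated configuration, the fallback is a two-step move: delete both blue edges and add two red edges simultaneously, comparing $te(B)+(\text{second-order gains})$ against $(s-2)e(B)$, exactly as in Case~1 of Theorem~\ref{half}.
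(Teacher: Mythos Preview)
Your enumeration of all five graphs with $e(R)=3$ and $\delta(R)\ge 1$ is correct, but you are doing far more work than necessary, and there is a conceptual slip in how you treat $t$.

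The slip first: $t$ is not a free parameter. Once $R$ is fixed, $s=|V(R)|$ is determined, and $t=r+1-s$ is then determined by $r$. So ``$K_3$ with $t=1$'' cannot occur for $r\ge 7$ (it would force $r=3$), and there is no ``small-$t$ residue'' to worry about for the $\Delta(R)\le 2$ graphs: for $r\ge 7$ every one of $3K_2$, $K_2\cup P_3$, $P_4$, $K_3$ has $t=r+1-s\ge r-5\ge 2$, so Lemmas~\ref{lem:matching} and~\ref{lem:D2b} dispose of all of them immediately.

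More importantly, you overlooked Theorem~\ref{half}. That theorem already rules out any cluster with $s\le (r+2)/2$; for $r\ge 7$ this forces $s\ge 5$. Of your five candidates, $K_3$ ($s=3$), $K_{1,3}$ ($s=4$), and $P_4$ ($s=4$) are therefore eliminated before you ever look at them, leaving only $3K_2$ and $K_2\cup P_3$. Both have $\Delta(R)\le 2$ and $t\ge 2$, so Lemma~\ref{lem:D2b} (with Lemma~\ref{lem:matching} for $3K_2$) finishes the proof in one line. This is exactly the paper's argument, and it explains why the hypothesis is $r\ge 7$: that is precisely the threshold at which $(r+3)/2\ge 5$, so that Theorem~\ref{half} kills the $s\le 4$ graphs---in particular the $K_{1,3}$ case you were wrestling with.

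Your direct swap attack on $K_{1,3}$ can in fact be pushed through (a red edge there has weight at least $t=r-3\ge 4$ while a blue edge has weight at most $s-2=2$), so your route is not wrong, just circuitous and, as written, incomplete in exactly the subcase that Theorem~\ref{half} handles for free.
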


\begin{proof}
Suppose $G$ is extremal. By Theorem \ref{half}, we may assume $s \ge \frac{r+3}{2} \ge 5$. There are two graphs $R$ with 3 edges, at least 5 vertices, and $\delta(R) \ge 1$: $R = 3K_2$ and $R = P_3\cup K_2$. Both have $\Delta(R) \le 2$ and $t \ge 2$, so by Lemma \ref{lem:D2b}, $k_3(G) < f_3(m,r)$.
\end{proof}

\begin{thm}\label{thm:e=4}If $G \in \cG_C(m,r)$ for $r \ge 8$ has a cluster with $e(R)=4$, then $k_3(G) < f_3(m,r)$.\end{thm}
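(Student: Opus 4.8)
\emph{Proposal.} The plan is to follow the template of Theorems~\ref{thm:s=2}--\ref{thm:e=3}: by Theorem~\ref{half} we may assume $s>\frac{r+2}{2}$, and since $\delta(R)\ge 1$ forces $s\le 2e(R)=8$, the hypothesis $r\ge 8$ leaves only $s\in\{6,7,8\}$, with $t=r+1-s\ge r-7\ge 1$. The first step is then to enumerate the admissible red graphs. Since the $4$ edges cannot support a cycle once $s\ge 6$, $R$ is a forest, and having $s-4$ components (each with at least one edge) it must be one of $4K_2$ ($s=8$), $P_3\cup 2K_2$ ($s=7$), or one of $P_4\cup K_2$, $2P_3$, $K_{1,3}\cup K_2$ ($s=6$).

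Most of these are dispatched immediately by earlier lemmas. The graph $4K_2$ has $\Delta(R)=1$ and is excluded by Lemma~\ref{lem:matching}. The graphs $P_3\cup 2K_2$, $P_4\cup K_2$, $2P_3$ all have $\Delta(R)\le 2$, and since $r\ge 8$ gives $t=r+1-s\ge 2$ in each case, they are excluded by Lemma~\ref{lem:D2b}. This reduces everything to the single case $R=K_{1,3}\cup K_2$, with $s=6$ and $t=r-5\ge 3$; I expect this to be the crux, and the reason $r\ge 8$ is needed.

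For $R=K_{1,3}\cup K_2$, write $u$ for the center of the claw, $v_1,v_2,v_3$ for its leaves, and $w_1w_2$ for the isolated red edge. If $e(B)\ge e(R)=4$ we fold: a direct computation gives $Q(K_{1,3}\cup K_2)=4(r-5)+0-\binom{3}{2}=4r-23>0$, so by Lemma~\ref{lem:QR} folding strictly increases $k_3$ and $G$ is not extremal. If instead $1\le e(B)\le 3$ (there is a blue edge since $G$ is connected and $m\ge\binom{r+1}{2}$), I would delete one blue edge $g$ and add one red edge $f$, chosen so that $f$ and $g$ are disjoint. Counting common neighbors inside $T\cup S$ alone already gives $w(uv_i)\ge t+2$ and $w(w_1w_2)\ge t+4$, while $w(g)\le s-2=4$ by Lemma~\ref{s-2}; since $g$ misses both endpoints of $f$, the swap changes $k_3$ by exactly $w(f)-w(g)\ge (t+2)-4=t-2\ge 1>0$, and it preserves $m$ and the bound $\Delta\le r$, so $G$ is not extremal.

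The delicate point --- and the step I expect to be the main obstacle --- is producing such an $f$ so that adding it does not raise any degree above $r$, i.e. so that both endpoints of $f$ satisfy $d_B<d_R$. This is a short case split on $d_B(w_1),d_B(w_2)$. If $d_B(w_1)=d_B(w_2)=0$, take $f=w_1w_2$: it is unsaturated and automatically disjoint from any blue edge $g$. Otherwise, say $d_B(w_1)\ge 1$; then $d_B(u)\le e(B)-1\le 2<d_R(u)=3$, and $\sum_i d_B(v_i)\le e(B)-1\le 2$ forces some $v_i$ with $d_B(v_i)=0$, so $f=uv_i$ together with a blue edge $g$ at $w_1$ does the job. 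Finally, as in the earlier theorems, the graph produced by the swap need not be connected, but this is harmless, since $f_3(m,r)$ is the maximum of $k_3$ over \emph{all} graphs with $m$ edges and $\Delta\le r$.
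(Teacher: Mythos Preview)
Your proposal is correct and follows essentially the same approach as the paper: both reduce via Theorem~\ref{half} to $s\in\{6,7,8\}$, enumerate the five admissible forests, dispatch $4K_2$ by Lemma~\ref{lem:matching} and $P_3\cup 2K_2$, $P_4\cup K_2$, $2P_3$ by Lemma~\ref{lem:D2b}, and then treat $K_{1,3}\cup K_2$ by folding when $e(B)\ge 4$ (computing $Q(R)=4r-23>0$) and by a single edge swap when $1\le e(B)\le 3$. The only difference is cosmetic: for the swap the paper splits on whether at most one versus exactly two blue edges meet $\{w_1,w_2\}$, sometimes allowing $f$ and $g$ to share a vertex, whereas you split on whether $d_B(w_1)=d_B(w_2)=0$ and always choose $f,g$ vertex-disjoint, which makes the identity $k_3(G-g+f)-k_3(G)=w(f)-w(g)$ hold exactly and keeps the bookkeeping slightly cleaner.
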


\begin{proof}
Suppose $G$ is extremal. By Theorem \ref{half}, we may assume $s \ge \frac{r+3}{2} \ge 5.5$ so $s \ge 6$.

There are five graphs $R$ with 4 edges, $\delta(R) \ge 1$, and $s \ge 6$ vertices. The ones on 6 vertices are $K_{1,3}\cup K_2$, $2P_3$, and $P_4\cup K_2$. On 7 vertices, $R$ can only be $2K_2\cup P_3$, and on 8 vertices, $R$ can only be $4K_2$. All of these except $K_{1,3}\cup K_2$ have either $\Delta(R) \le 1$ or $\Delta(R) \le 2$ and $t \ge 2$, so Lemmas \ref{lem:matching} and \ref{lem:D2b} show $G$ is not extremal.

Suppose $R = K_{1,3}\cup K_2$. Then $Q(R) = (r+1-6)4+0-3 = 4r-23 > 0$. If $e(B) \ge e(R)$, then folding will increase the number of triangles, and $G$ was not extremal. Otherwise, $1 \le e(B) \le 3$. The isolated $K_2$ in $R$ has weight $r-1 \ge 7$. Any blue edge has weight at most $s-2 = 4$. If there is only one blue edge incident to the red $K_2$ (or none), then delete it (or any blue edge) and add the edge corresponding to the red $K_2$, which increases the number of triangles by at least $7-4=3$. Otherwise, there are 2 blue edges incident to the red $K_2$, so 0 or 1 blue edges incident to the red $K_{1,3}$. Deleting a blue edge (from the $K_{1,3}$ if needed) and adding one of the red $K_{1,3}$ edges increases the number of triangles by at least $5-4 = 1$. Therefore $G$ was not extremal.
\end{proof}

\section{Degree Multiset Optimization}\label{sec:seq}

The number of triangles in a graph can be bounded easily (but crudely) as follows: 
\begin{equation}\label{eq:k3}
	k_3(G) = \frac{1}{3}\sum_{v\in V(G)}k_3(v) = \frac13\sum_{v\in V(G)} e(N(v)) \le \frac13\sum_{v\in V(G)}\binom{d(v)}2.
\end{equation}	

We have shown, in Section \ref{sec:red}, that vertices of degree $r$ lie in clusters with certain red graphs forbidden.
In this section we consider the possible degree multisets to give an upper bound on the number of triangles in graphs $G$ that do not have any of the excluded red graphs $R$ considered in Section \ref{sec:red}. We consider the constraints imposed by the trivial bound above.

For instance, when $r \ge 3$, Theorem \ref{thm:s=2} shows that in an extremal connected graph, every cluster is missing at least three edges. Similarly, for $r=7$ and $8$, Theorems \ref{thm:e=3} and \ref{thm:e=4} show that in an extremal connected graph, every cluster is missing at least four or five edges, respectively. We define upper bounds for $\k_3(G)$ based on the degree multiset of $G$.

\begin{definition} 
	For given $k, m, r$ and for $d \in \{0\}\cup[r]$, let 
	\[
		w(d) = \begin{cases}\binom{d}{2} & \text{ if } d \ne r\\
						    \binom{r}{2} - k & \text{ if } d=r.
			   \end{cases}
	\]
Set 
\begin{align*}
	M_k(m,r) &= \frac{1}{3}\max \set[\Big]{ \sum_{d \in D} w(d) : \sum_{d \in D} d = 2m}\text{ and}\\
	M^*_k(m,r) &= \frac{1}{3}\max \set[\Big]{ \sum_{d \in D} w(d) : \sum_{d \in D} d = 2m,~r \in D},
\end{align*} where in both cases $D$ is a multiset of arbitrary size from $\{0\}\cup[r]$.
\end{definition}

\begin{lemma}\label{lem:ub}
	For any $G\in \cG(m,r)$ such that every cluster has $e(R)\ge k$ we have $\k_3(G)\le M_k(m,r)$. If in addition the Main Theorem holds for maximum degree at most $r-1$ and for numbers of edges up through $m-1$, and $m \ge \binom{r+1}2+1$, then we have $k_3(G) \le \floor{M_k^*(m,r)}$.
\end{lemma}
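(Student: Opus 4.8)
The plan is to prove the two statements separately. For the first bound, I would start from the crude inequality~\eqref{eq:k3}, namely $k_3(G)\le \frac13\sum_{v\in V(G)}\binom{d(v)}2$, and improve the contribution of the degree-$r$ vertices. Each vertex $v$ of degree $r$ lies in some cluster $T$ (it is the centre of a tight clique), and by hypothesis that cluster has $e(R)\ge k$, i.e.\ the closed neighbourhood $N[v]=T\cup S_T$ of size $r+1$ is missing at least $k$ edges inside $S_T$. Since none of those missing edges is incident to $v$, the neighbourhood $G[N(v)]$ has at most $\binom r2 - k$ edges, so $k_3(v)=e(N(v))\le \binom r2 - k = w(r)$. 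For vertices of degree $d\ne r$ we simply use $k_3(v)\le\binom d2 = w(d)$. Summing over $v$ and dividing by $3$ gives $k_3(G)\le \frac13\sum_{v\in V(G)} w(d(v))$, and the degree multiset $D$ of $G$ satisfies $\sum_{d\in D} d = 2m$, so this sum is at most the maximum defining $M_k(m,r)$. Hence $k_3(G)\le M_k(m,r)$.

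For the second bound, I would split on whether $\Delta(G)=r$. If $\Delta(G)\le r-1$, then $G\in\cG(m,r-1)$, and since $m\ge\binom{r+1}2+1$ Corollary~\ref{Delta=r} (which we may invoke: the Main Theorem for maximum degree at most $r-1$ is exactly Conjecture~\ref{conj:allt} for $t=3$ at degree $r-1$, and it holds for edge counts up through $m-1$) shows that no such $G$ is extremal, so $k_3(G)<f_3(m,r)\le g_3(m,r)$; in particular $k_3(G)\le g_3(m,r)$, which one checks is at most $M_k^*(m,r)$ — indeed the conjectured extremal graph $aK_{r+1}\cup\cC(b)$ has $a$ vertices of degree $r$ (each in a $K_{r+1}$, hence in a cluster with $R$ empty, so the relevant $w$-value there is in fact $\binom r2$, not $\binom r2-k$, but $aK_{r+1}\cup\cC(b)$ still realises a multiset containing $r$ and summing to $2m$ so its triangle count is dominated by $M_k^*$). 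If instead $\Delta(G)=r$, then $G$ has at least one vertex of degree $r$, so its degree multiset $D$ contains $r$; applying the same vertex-by-vertex bound as above, $k_3(G)\le\frac13\sum_{d\in D}w(d)\le M_k^*(m,r)$. Since $k_3(G)$ is an integer, $k_3(G)\le\floor{M_k^*(m,r)}$. Combining the two cases completes the proof.

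The main subtlety I anticipate is the first case of the second bound: one has to be careful that $M_k^*(m,r)$ really does dominate $g_3(m,r)$, i.e.\ that the conjectured extremal graph's degree multiset is a legal competitor in the maximisation defining $M_k^*$. This is where the clause ``$m\ge\binom{r+1}2+1$'' is used (to guarantee $a\ge 1$ so that $r\in D$ for that graph) together with Corollary~\ref{Delta=r} to rule out the low-max-degree case outright; once $\Delta(G)=r$ the argument is the clean vertex-counting bound and presents no difficulty. A second small point to handle carefully is that a degree-$r$ vertex may lie in more than one cluster, but this does not matter: we only need the existence of \emph{one} cluster containing it with $e(R)\ge k$, which the hypothesis provides, to conclude $k_3(v)\le w(r)$.
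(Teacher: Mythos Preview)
Your argument for the first part is correct and matches the paper: for each vertex $v$ of degree $r$, the cluster containing $v$ has $N[v]=T\cup S_T$ of size $r+1$, the $e(R)\ge k$ missing edges lie inside $S_T\subseteq N(v)$, so $e(N(v))\le\binom r2-k=w(r)$; for other vertices the crude bound $e(N(v))\le\binom{d(v)}2=w(d(v))$ suffices. The $\Delta(G)=r$ case of the second part is likewise fine.

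The gap is in your treatment of the case $\Delta(G)\le r-1$. Two of the inequalities you write down are simply false. First, you write $k_3(G)<f_3(m,r)\le g_3(m,r)$, but by definition $g_3(m,r)=k_3(aK_{r+1}\cup\cC(b))\le f_3(m,r)$, so the last inequality is in the wrong direction and you cannot conclude $k_3(G)\le g_3(m,r)$ from $k_3(G)<f_3(m,r)$. Second, and more seriously, the claim $g_3(m,r)\le M_k^*(m,r)$ is false. You acknowledge that for the degree-$r$ vertices of $aK_{r+1}\cup\cC(b)$ the actual local triangle count is $\binom r2$, not $w(r)=\binom r2-k$; this is precisely why the inequality fails. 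Concretely, take $r=3$, $k=2$, $m=7$: then $g_3(7,3)=k_3(K_4\cup K_2)=4$, whereas any multiset $D$ with $3\in D$ and $\sum d=14$ has $\frac13\sum w(d)\le 2$ (since $w(3)=w(2)=1$, $w(1)=0$), so $M_2^*(7,3)=2<4$. The sentence ``its triangle count is dominated by $M_k^*$'' is therefore not a hand-wave over a routine check; it is wrong.

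The paper's own proof is terse here: it simply invokes Corollary~\ref{Delta=r} to assert that the degree multiset of $G$ contains $r$, which strictly speaking Corollary~\ref{Delta=r} guarantees only for extremal $G$. In every application of the lemma in the paper (the Main Theorem and Lemma~\ref{lem:r=8}) the graph under consideration is extremal, so $\Delta(G)=r$ is already known and only the clean case of your argument is needed. Your attempt to supply a separate argument for $\Delta(G)\le r-1$ goes beyond what the paper does, but the argument you give does not work.
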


\begin{proof}
Apply (\ref{eq:k3}), and observe that (in each case) the degree multiset of $G$ is one of the candidates in the maximization by Corollary \ref{Delta=r}.
\end{proof}

We will show in many cases the lower bounds we have on the number of edges in red graphs ensure that the upper bound from Lemma~\ref{lem:ub} is less than the number of triangles in $aK_{r+1}\cup\cC(b)$, proving the Main Theorem in those cases.
We say that a multiset $D$ is \emph{optimal} if it achieves the maximum $M_k(m,r)$ and has no $0$ entries.

\begin{lemma}\label{lem:r-2}
	An optimal multiset $D$ contains at most one entry from $[r-2]$.
\end{lemma}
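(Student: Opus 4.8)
The plan is to argue by an exchange (smoothing) argument on the multiset $D$. Suppose for contradiction that an optimal multiset $D$ contains two entries $d_1, d_2 \in [r-2]$ (not necessarily distinct). The idea is to replace these two entries by a different pair with the same sum that increases (or at least does not decrease, while reducing the count of small entries) the objective $\sum_{d\in D} w(d)$. Since for $d \ne r$ we have $w(d) = \binom{d}{2}$, which is a strictly convex function of $d$ on $\{0,1,\dots,r-1\}$, the natural move is to push the two entries apart: replace $\{d_1, d_2\}$ by $\{d_1 - 1, d_2 + 1\}$ when $d_1 \le d_2$, provided $d_2 + 1$ still lies in the allowed set and $d_1 - 1 \ge 0$. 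Strict convexity gives $\binom{d_1-1}{2} + \binom{d_2+1}{2} > \binom{d_1}{2} + \binom{d_2}{2}$, contradicting optimality — unless the move is obstructed, i.e. $d_2 + 1 = r$ (so $d_2 = r-1$), which is not in $[r-2]$ anyway, or $d_1 - 1$ would create a $0$ entry.

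First I would handle the case $d_1 = d_2 = 1$ separately: here pushing apart gives $\{0, 2\}$, and $\binom{0}{2} + \binom{2}{2} = 1 = \binom{1}{2} + \binom{1}{2}$, so the objective is unchanged; but since the definition of optimal forbids $0$ entries, this alone does not immediately finish the argument. Instead, if $D$ has two entries equal to $1$, I would merge them into a single entry $2$, which keeps the degree sum fixed, keeps the objective the same ($\binom{2}{2} = 1 = \binom{1}{2}+\binom{1}{2}$), removes no-$0$-entry concerns, and strictly decreases the number of entries in $[r-2]$; iterating, we reach an optimal multiset (same value $M_k(m,r)$) with at most one entry equal to $1$. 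Then for any remaining pair $d_1 \le d_2$ in $[r-2]$ with $d_2 \ge 2$, the move $\{d_1, d_2\} \mapsto \{d_1 - 1, d_2 + 1\}$ has $d_2 + 1 \le r-1 \ne r$, so $w(d_2+1) = \binom{d_2+1}{2}$, and strict convexity yields a strict increase in the objective — but if $d_1 = 1$ this creates a $0$ entry, which I would immediately delete (deleting a $0$ changes neither the degree sum nor the objective), again contradicting optimality since we would have strictly increased $\sum w(d)$. If instead $d_1 \ge 2$, the move keeps all entries positive and strictly increases the objective, the same contradiction.

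The one genuine subtlety to get right is the interaction with the "no $0$ entries" clause in the definition of optimal: a naive push-apart move can manufacture a $0$, so the argument must be organized so that whenever a $0$ is created it can be harmlessly removed without lowering $\sum_{d\in D} w(d)$, which is true precisely because $w(0) = \binom{0}{2} = 0$. The other small point is that the entry $r$ is never touched by these moves (we only push entries within $[r-2]$ and the target $d_2+1$ never reaches $r$), so the anomalous value $w(r) = \binom{r}{2} - k$ plays no role here. Assembling these observations: starting from an optimal $D$ with $\ge 2$ entries in $[r-2]$, we produce a multiset with the same degree sum, no $0$ entries, and strictly larger $\sum w(d)$ — contradicting $M_k(m,r)$ being the maximum. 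Hence an optimal multiset has at most one entry from $[r-2]$. I expect the main obstacle to be bookkeeping the degenerate small cases (entries equal to $1$, and avoiding spurious $0$s) cleanly rather than any substantive difficulty; the convexity of $\binom{d}{2}$ does all the real work.
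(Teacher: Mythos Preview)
Your proof is correct and follows essentially the same smoothing argument as the paper: replace $\{d_1,d_2\}$ (with $d_1\le d_2$ in $[r-2]$) by $\{d_1-1,d_2+1\}$, discard any resulting $0$, and observe that the objective strictly increases. One minor slip: $\binom{1}{2}=0$, so in fact $\binom{0}{2}+\binom{2}{2}=1>0=2\binom{1}{2}$, and the paper's direct computation gives the gain as $d_2-d_1+1\ge 1$ in all cases; hence your separate handling of the $\{1,1\}$ case is unnecessary (though harmless).
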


\begin{proof}
	If an optimal multiset $D$ had two elements in $[r-2]$, $x \le y$, then changing these two elements to $x-1$ and $y+1$ maintains the sum of $2m$ while increasing the weighted sum $\sum_{d \in D} w(d)$, contradicting the optimality of $D$. If $x-1 = 0$, then the 0 is discarded, and $x=1$ and $y$ are replaced by $y+1$. The weighted sum is increased because 
\begin{align*}
w(x-1) + w(y+1) - (w(x) + w(y)) &= \binom{x-1}{2} + \binom{y+1}{2} - \binom{x}{2} - \binom{y}{2}\\
&= y - (x-1) = y - x + 1 \ge 1.\qedhere
\end{align*}\end{proof}

\begin{lemma}\label{lem:r-1-d}For any $m$ and $r \le 2k+1$, if an optimal multiset $D$ contains an entry $d \in [r-2]$ and at least $r-1-d$ copies of $r$, then there is an optimal multiset containing no entries from $[r-2]$.\end{lemma}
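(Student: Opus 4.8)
The plan is to produce from $D$ another optimal multiset $D'$ by a single swap that preserves the degree sum $\sum d'$, deletes the offending entry $d$, and introduces no new entry below $r-1$; since $D$ is already optimal, it then suffices to check that the swap does not decrease the weighted sum $\sum_{d'} w(d')$, after which $D'$ is an optimal multiset with no entry in $[r-2]$, as required.

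First I would apply Lemma~\ref{lem:r-2}: an optimal multiset has at most one entry in $[r-2]$, so under the hypothesis $D$ has exactly the one entry $d$, and every other entry equals $r-1$ or $r$. Using the hypothesis that $D$ contains at least $r-1-d$ copies of $r$, I would define $D'$ by removing $d$ together with $r-1-d$ copies of $r$ and inserting $r-d$ copies of $r-1$. The identity $d + (r-1-d)r = (r-1)(r-d)$ shows $\sum_{d'\in D'} d' = \sum_{d'\in D} d' = 2m$, so $D'$ is again one of the candidates in the maximization defining $M_k(m,r)$; and since $d \le r-2$ forces $r-d \ge 2$, the inserted entries are legitimate, $D'$ has no $0$ entries, and — crucially — $D'$ has no entry in $[r-2]$ at all.

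The remaining point, and the only place any computation is needed, is to show
\[
(r-d)\binom{r-1}{2} \;\ge\; \binom{d}{2} + (r-1-d)\left(\binom{r}{2} - k\right),
\]
which says precisely that $\sum_{d'\in D'} w(d') \ge \sum_{d'\in D} w(d')$ and hence that $D'$ is optimal. This is where the hypothesis $r \le 2k+1$, i.e. $2k \ge r-1$, enters. Expanding via $\binom{r-1}{2} = \binom{r}{2} - (r-1)$, the difference of the two sides simplifies to $\tfrac12(r-1)(2d-r) + (r-1-d)k - \tfrac12 d(d-1)$; replacing $(r-1-d)k$ by the smaller quantity $\tfrac12(r-1-d)(r-1)$ (valid since $r-1-d \ge 0$ and $k \ge \tfrac{r-1}{2}$) and simplifying collapses this to $\tfrac12(d-1)(r-1-d)$, which is nonnegative because $1 \le d \le r-2$. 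I do not expect any genuine obstacle here: the only real choice is to pick the swap (one $d$ and $r-1-d$ copies of $r$ for $r-d$ copies of $r-1$) so that the degree sum is preserved exactly while $d$ is eliminated, and the required inequality then falls out of this short calculation.
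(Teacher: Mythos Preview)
Your proof is correct and follows essentially the same approach as the paper: the same swap (remove $d$ and $r-1-d$ copies of $r$, insert $r-d$ copies of $r-1$), the same appeal to Lemma~\ref{lem:r-2} to guarantee $d$ is the unique small entry, and the same verification that the swap preserves the degree sum and does not decrease the weighted sum. The only cosmetic difference is in the algebra for the weighted-sum inequality: the paper factors the difference exactly as $-\tfrac12(d-(r-1))(d-(r-2k))$, whereas you first bound $(r-1-d)k \ge \tfrac12(r-1-d)(r-1)$ and then simplify to $\tfrac12(d-1)(r-1-d)$; both arguments use the hypothesis $r\le 2k+1$ in the same way and yield the same conclusion.
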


\begin{proof}
We will change the $r-1-d$ copies of $r$ and the $d \in [r-2]$ to $r-d$ copies of $r-1$. The resulting multiset contains no entries from $[r-2]$ by Lemma \ref{lem:r-2}.

This maintains the sum of $D$ since $d + (r-1-d)r = d + r^2 - r - dr = (r-d)(r-1)$. The increase in the weighted sum is given by
\begin{align*}
(r-d)\binom{r-1}{2} &- \left( \binom{d}{2} + (r-1-d)\left(\binom{r}{2} - k\right)\right)\\
					&= -\frac12\left((r-d)(r-1)(2)+d^2-d-r^2+r-2kr+2dk+2k\right)\\
					&= -\frac12\left(d^2 - 2dr + (1+2k)d + r^2 - (2k+1)r + 2k\right)\\
					&= -\frac12\left(d -(r-1)\right)\left(d-(r-2k)\right)\\
					&\ge 0,
\end{align*} for $r-2k \le d \le r-1$. Since $r \le 2k+1$, we have $r-2k \le 1 \le d \le r-2 < r-1$, and we weakly increased the weighted sum.\end{proof}

\begin{lemma}\label{lem:rr-1}If $D$ is an optimal multiset containing only $r-1$'s and $r$'s with $k\ge r/2$, then there is an optimal multiset containing at most $r-2$ copies of $r$, and the rest $r-1$'s.\end{lemma}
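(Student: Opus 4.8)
The plan is to reduce the number of $r$'s by one simple exchange applied repeatedly. Write the optimal multiset $D$ as consisting of $p$ copies of $r$ together with some copies of $r-1$. If $p\le r-2$ there is nothing to do, so assume $p\ge r-1$ and form $D'$ from $D$ by deleting $r-1$ of the copies of $r$ and inserting $r$ copies of $r-1$. (This is the ``$d=0$'' analogue of the move used in Lemma~\ref{lem:r-1-d}, which was not directly available there since $0\notin[r-2]$.)

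First I would check that the exchange is legal and preserves the constraints: there are enough copies of $r$ to delete since $p\ge r-1$; the degree sum is preserved because the deleted and the inserted entries both contribute $r(r-1)$, so $\sum_{d\in D'}d=2m$; and $D'$ still has no $0$ entries, indeed it is again supported on $\{r-1,r\}$. Next I would compute the effect on the weighted sum. Using $w(r)=\binom r2-k$ and $w(r-1)=\binom{r-1}2$,
\[
    \sum_{d\in D'}w(d)-\sum_{d\in D}w(d)
    = r\binom{r-1}2-(r-1)\Bigl(\binom r2-k\Bigr)
    = (r-1)\Bigl(k-\frac r2\Bigr),
\]
which is nonnegative precisely because $k\ge r/2$. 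Hence $\frac13\sum_{d\in D'}w(d)\ge \frac13\sum_{d\in D}w(d)=M_k(m,r)$; since $D'$ is one of the multisets competing in the definition of $M_k(m,r)$, equality must hold, so $D'$ is optimal as well.

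Finally I would iterate: each application of the move decreases the number of $r$'s by exactly $r-1$ while leaving the multiset optimal and supported on $\{r-1,r\}$, so after finitely many steps the number of copies of $r$ drops below $r-1$, i.e.\ is at most $r-2$, which is the desired conclusion. I do not expect a genuine obstacle here; the only points requiring a little care are that the move removes precisely $r-1$ copies of $r$ (so that $D'$ is again a legitimate, zero-free multiset on $\{r-1,r\}$) and that a weak increase in the weighted sum suffices, since $D$ was already optimal.
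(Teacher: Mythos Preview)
Your proposal is correct and follows essentially the same approach as the paper: both replace $r-1$ copies of $r$ by $r$ copies of $r-1$, verify the sum is preserved, compute the weighted-sum change as $(r-1)(k-r/2)\ge 0$, and iterate until at most $r-2$ copies of $r$ remain.
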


\begin{proof}
Changing $r-1$ copies of $r$ to $r$ copies of $r-1$ maintains the sum of $D$. 

The weighted sum increases by
\begin{align*}
r\binom{r-1}{2} - (r-1)\left(\binom{r}{2}-k\right) &= (r-1)\left(\frac{1}{2}r(r-2)-\binom{r}{2}+ k\right)\\
&= (r-1)\left(k-r/2\right) \ge 0.
\end{align*} Change any $r-1$ copies of $r$ to $r$ copies of $r-1$ repeatedly to get an optimal multiset with at most $r-2$ copies of $r$.
\end{proof}

The next few results consider the case where $k=\ceil{r/2}$.

\begin{lemma}\label{lem:nod}
	If $m \ge \binom{r+1}{2}+1$ and $k=\ceil{r/2}$ then there is an optimal multiset containing no elements of $[r-2]$.
\end{lemma}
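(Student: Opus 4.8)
The plan is to take an optimal multiset $D$ (one achieving $M_k(m,r)$ with no zero entries) and show that, after a bounded sequence of weight-preserving, weakly-weight-increasing modifications, we can eliminate all entries from $[r-2]$. By Lemma~\ref{lem:r-2} we already know $D$ contains \emph{at most one} entry $d\in[r-2]$, so we may assume $D$ has exactly one such entry $d$, and otherwise consists only of $r-1$'s and $r$'s. Since $k=\ceil{r/2}$, we have $r\le 2k+1$ (indeed $2k\in\{r,r+1\}$), so Lemma~\ref{lem:r-1-d} applies as soon as we can guarantee that $D$ contains at least $r-1-d$ copies of $r$. Thus the entire lemma reduces to the following claim: if the unique entry $d\in[r-2]$ cannot be absorbed via Lemma~\ref{lem:r-1-d} because there are fewer than $r-1-d$ copies of $r$, then we can still rearrange $D$ to remove $d$.

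The key step is a counting/parity argument using the hypothesis $m\ge\binom{r+1}{2}+1$. Suppose $D$ has $p$ copies of $r$, $q$ copies of $r-1$, and one copy of $d\in[r-2]$, with $p\le r-2-d$ (so Lemma~\ref{lem:r-1-d} does not directly apply). The degree sum condition reads $pr+q(r-1)+d=2m\ge (r+1)r+2 = r^2+r+2$. I would argue that this forces $q$ to be large — in particular $q\ge 1$ — and then handle the small remaining configuration directly: if $p\ge 1$ and $q\ge 1$, replace one $r$, one $r-1$, and the entry $d$ by a suitable collection of $(r-1)$'s and at most one new small entry, chosen so the sum is preserved; one checks (a short convexity computation, exactly in the spirit of Lemmas~\ref{lem:r-2}, \ref{lem:r-1-d}, \ref{lem:rr-1}) that the weighted sum does not decrease, and iterate. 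The boundary cases ($p=0$, or $q=0$, or $d$ very small) are finite and are dispatched by the same local move, noting that when $p=0$ the multiset is entirely $(r-1)$'s plus one small entry $d$, and the inequality $q(r-1)+d=2m$ together with $m\ge\binom{r+1}2+1$ gives enough room to peel off $(r-1-d)$ copies of $r-1$ and merge them with $d$ into copies of $r-1$ (using $(r-1-d)(r-1)+d \equiv$ a multiple of $(r-1)$ modulo the needed adjustment).

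The main obstacle I anticipate is the bookkeeping in the case $p<r-1-d$: here one genuinely needs the lower bound on $m$ to supply enough $(r-1)$- or $r$-entries to perform a legal exchange, and one must verify that the exchange never introduces a \emph{new} element of $[r-2]$ other than possibly replacing $d$ by a strictly larger small value (so the process terminates because $d$ strictly increases toward $r-1$ at each step, or is eliminated outright). Making the ``strictly increasing potential'' argument airtight — so that the sequence of moves provably halts at a multiset with no $[r-2]$-entry rather than cycling — is the delicate point; everything else is the same flavor of two-term convexity estimate already used repeatedly in this section.
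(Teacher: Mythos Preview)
Your setup is right up to the point where Lemma~\ref{lem:r-1-d} fails to apply, but after that the argument is not a proof: the ``replace one $r$, one $r-1$, and $d$ by a suitable collection'' move is never specified, the $p=0$ merge you describe does not preserve the sum (note $(r-1-d)(r-1)+d$ is not a multiple of $r-1$ in general), and the promised termination via a strictly increasing small entry is only asserted, not exhibited. So as it stands the proposal has a genuine gap in exactly the case you flagged as delicate.

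The paper closes this gap with two clean observations you are missing. First, there is a second explicit swap, dual to Lemma~\ref{lem:r-1-d}: if $D$ contains at least $d$ copies of $r-1$, replace those $d$ copies of $r-1$ together with the single entry $d$ by $d$ copies of $r$. The sum is preserved since $d(r-1)+d=dr$, and a one-line computation shows the weighted sum does not decrease for $0<d<2r-(2k+1)$, which covers all $d\in[r-2]$ when $k=\ceil{r/2}$. Second, the residual case (fewer than $r-1-d$ copies of $r$ \emph{and} fewer than $d$ copies of $r-1$) is simply \emph{empty}: with at most $r-2-d$ copies of $r$, at most $d-1$ copies of $r-1$, and the single $d$, the total sum is at most $r^2-3r+1$, whereas $2m\ge r^2+r+2$. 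So no iteration or potential argument is needed at all---the lower bound on $m$ rules the case out in one line, rather than merely forcing $q\ge 1$ as you suggest.
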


\begin{proof}
	Suppose $D$ is an optimal multiset containing an entry $d \in [r-2]$.

	\begin{case}$D$ contains at least $r-1-d$ copies of $r$.\end{case}

	Lemma \ref{lem:r-1-d} shows there is an optimal multiset containing no entries from $[r-2]$.

	\begin{case}$D$ contains at least $d$ copies of $r-1$.\end{case}

	We will change the $d$ copies of $r-1$ and the $d \in [r-2]$ to $d$ copies of $r$. The resulting multiset contains no entries from $[r-2]$ by Lemma \ref{lem:r-2}. This maintains the sum: $d + d(r-1) = dr$.	It also increases the weighted sum: $d\left(\binom{r}{2} - k\right) - \left(\binom{d}{2} + d\binom{r-1}{2}\right) = -\frac{d}{2}(d - 2r + (2k+1))$, which is positive between $d=0$ and $d = 2r-(2k+1)$. Since $k <r$, we have $2r-(2k+1) > 0 $.

	\begin{case}$D$ contains at most $r-2-d$ copies of $r$ and at most $d-1$ copies of $r-1$.\end{case}

	By Lemma \ref{lem:r-2}, $d$ is the only entry from $[r-2]$. Then $\sum_{d' \in D} d' \le d + (r-2-d)r + (d-1)(r-1) = d + r^2 - 2r - dr + dr - r - d + 1 = r^2 - 3r + 1$. We also have $2m \ge 2\left(\binom{r+1}{2} + 1\right) = (r+1)r + 2 = r^2 + r + 2$. Thus $r^2 + r + 2 \le 2m = \sum_{d \in D} d \le r^2 - 3r + 1$, which implies $4r+1 \le 0$, i.e. $r \le -\frac{1}{4}$, a contradiction.
\end{proof}

\begin{thm}\label{thm:seqoptsoln}If $m \ge \binom{r+1}{2}+1$ and $k=\ceil{r/2}$ then $3M_k(m,r) \le (r-2)m$.
\end{thm}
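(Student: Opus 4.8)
The plan is to set $k = \lceil r/2 \rceil$ throughout and analyze an optimal multiset $D$ for $M_k(m,r)$, using Lemma~\ref{lem:nod} to reduce to the case where $D$ has no entries from $[r-2]$, i.e.\ $D$ consists only of $r$'s and $(r-1)$'s (recall optimal multisets also have no $0$ entries). Writing $D$ as $p$ copies of $r$ and $q$ copies of $r-1$, the constraint $\sum_{d\in D} d = 2m$ becomes $pr + q(r-1) = 2m$, and the quantity to bound is $3M_k(m,r) = p\bigl(\binom{r}{2} - k\bigr) + q\binom{r-1}{2}$.

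First I would compute $3M_k(m,r)$ as a function of $p$ alone (or $q$ alone) using the linear constraint to eliminate one variable, and observe that since $k = \lceil r/2\rceil \ge r/2$, Lemma~\ref{lem:rr-1} lets me assume $p \le r-2$, and in fact—pushing the same exchange move—I expect the genuinely optimal configuration to use as few copies of $r$ as the divisibility constraint $pr + q(r-1) = 2m$ permits, namely the smallest $p \in \{0,1,\dots,r-2\}$ with $p r \equiv 2m \pmod{r-1}$, equivalently $p \equiv 2m \pmod{r-1}$. So there is a unique such $p =: p_0 \in \{0,\dots,r-2\}$, and then $q = (2m - p_0 r)/(r-1)$.

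With $p = p_0$ fixed, $3M_k(m,r) = p_0\binom{r}{2} - p_0 k + \frac{2m - p_0 r}{r-1}\binom{r-1}{2} = p_0\binom{r}{2} - p_0 k + (2m - p_0 r)\frac{r-2}{2}$. The claim $3M_k(m,r) \le (r-2)m$ then reduces, after clearing the factor $\tfrac12$, to the inequality $p_0 r(r-1) - 2p_0 k - p_0 r(r-2) \le 0$, i.e.\ $p_0\bigl(r(r-1) - r(r-2) - 2k\bigr) = p_0(r - 2k) \le 0$. Since $k = \lceil r/2\rceil \ge r/2$ we have $r - 2k \le 0$, and $p_0 \ge 0$, so this holds. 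I would also check the boundary contributions: verifying $q \ge 0$ (which follows from $m \ge \binom{r+1}{2}+1$ being comfortably larger than $\binom{r}{2}$, exactly the slack exploited in Lemma~\ref{lem:nod}) and confirming that replacing $p_0$ by a larger admissible $p$ only decreases the weighted sum, which is the content of the $p(r-2k) \le 0$ computation applied to the difference.

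The main obstacle is bookkeeping rather than ideas: one must be careful that the reduction "minimize the number of $r$'s subject to the modular constraint" is actually valid, i.e.\ that repeatedly applying the exchange of Lemma~\ref{lem:rr-1} (trading $r-1$ copies of $r$ for $r$ copies of $r-1$) terminates at $p = p_0$ and never overshoots, and that no optimal multiset is forced to carry extra copies of $r$ for parity reasons the way it might for smaller $k$. Once $D$ is pinned down to $(p_0, q)$, the final inequality is the one-line computation $p_0(r - 2k)\le 0$, so essentially all the work is in the structural reductions, all of which are already available as Lemmas~\ref{lem:r-2}, \ref{lem:rr-1}, and \ref{lem:nod}.
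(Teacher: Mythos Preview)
Your proposal is correct and follows essentially the same route as the paper: reduce via Lemma~\ref{lem:nod} to multisets consisting only of $r$'s and $(r-1)$'s, then compute the weighted sum as $(r-2)m + \tfrac{p}{2}(r-2k) \le (r-2)m$. The only difference is that you spend effort pinning down the minimal admissible $p_0$ via the modular constraint, whereas the paper (and your own final computation) simply observes that the bound $p(r-2k)\le 0$ holds for every $p\ge 0$, so no optimization over $p$ is needed at all---you can drop the discussion of $p_0$ and the worry about ``overshooting'' entirely.
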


\begin{proof} By Lemma~\ref{lem:nod} we may assume an optimal multiset $D$ consists only of $x$ copies of $r$ and $y$ copies of $r-1$. By Lemma \ref{lem:rr-1}, we may further assume $0 \le x \le r-2$. The degree sum formula implies $2m = xr + y(r-1)$, so $y  = \frac{2m-xr}{r-1}$. 

The weighted sum of $D$ gives the value of $3M_k(m,r)$, which is
\begin{align*}
y\binom{r-1}{2} + x \left(\binom{r}{2}-k\right) &= \frac{2m-xr}{r-1}(r-1)(r-2)/2 - x\left(k - \frac{r(r-1)}{2}\right)\\
&= (r-2)m - \frac{x}{2}(2k-r)\\
&\le (r-2)m\end{align*} since $x \ge 0$ and $2k \ge r$.
\end{proof}

\begin{lemma}\label{lem:1to7}If $1 \le r \le 7$ and $a \ge 1$, then $(r-2)m < 3g_3(m,r)$.\end{lemma}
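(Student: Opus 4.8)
The plan is to prove the inequality $(r-2)m < 3g_3(m,r)$ directly by using the explicit formula for $g_3(m,r)$ together with the decomposition $m = a\binom{r+1}{2} + b$ where $b = [c,d]$. Recall that $g_3(m,r) = a\binom{r+1}{3} + \binom{c}{3} + \binom{d}{2}$, so the claim is equivalent to
\[
	(r-2)\parens[\Big]{a\binom{r+1}{2} + \binom{c}{2} + d} < 3a\binom{r+1}{3} + 3\binom{c}{3} + 3\binom{d}{2}.
\]
The first observation is that the "bulk" term already works with equality-type behavior: since $3\binom{r+1}{3} = (r-1)\binom{r+1}{2}$, the full-clique contribution satisfies $3a\binom{r+1}{3} = (r-1)\,a\binom{r+1}{2} > (r-2)\,a\binom{r+1}{2}$, with a surplus of exactly $a\binom{r+1}{2}$. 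Since $a \ge 1$, this surplus is at least $\binom{r+1}{2} \ge \binom{c+1}{2} > b \ge d, \binom{c}{2}$, so the surplus from the clique part dominates everything coming from the $b$ part. Thus it suffices to check that
\[
	\binom{r+1}{2} + 3\binom{c}{3} + 3\binom{d}{2} \ge (r-2)\binom{c}{2} + (r-2)d,
\]
and since $c \le r$ (because $b < \binom{r+1}{2}$ forces $\binom{c}{2} \le b < \binom{r+1}{2}$, hence $c \le r$) and $d < c \le r$, each of $3\binom{c}{3} = (c-2)\binom{c}{2} \ge 0$ and the remaining slack terms can be compared termwise against $(r-2)\binom{c}{2}$ and $(r-2)d$. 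The hypothesis $r \le 7$ enters precisely here: it keeps $\binom{r+1}{2}$ large enough relative to the linear-in-$r$ penalties $(r-2)\binom{c}{2}$ and $(r-2)d$ that the bound $c, d \le r$ can absorb.

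More carefully, I would split according to whether $b = 0$ or $b \ge 1$. If $b = 0$ then $c = d = 0$ and the inequality reads $(r-2)a\binom{r+1}{2} < (r-1)a\binom{r+1}{2}$, which is immediate. If $b \ge 1$, I would write $c \le r$ and $d \le c - 1 \le r - 1$, and then bound $(r-2)\binom{c}{2} - 3\binom{c}{3} = \binom{c}{2}(r-2 - (c-2)) = \binom{c}{2}(r - c) \le \binom{r}{2}(r-c)$ and $(r-2)d - 3\binom{d}{2} = d(r-2) - \tfrac{3}{2}d(d-1)$, whose maximum over $0 \le d \le r-1$ is a small explicit quadratic; for $r \le 7$ these are both at most $\binom{r+1}{2}$ (indeed one checks the worst case $c = \lceil 2r/3 \rceil$ or so gives something like $\binom{r}{2}\cdot(r/3)$, which for $r \le 7$ stays below $\binom{r+1}{2}$). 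Since the clique surplus $a\binom{r+1}{2}$ with $a \ge 1$ beats the sum of these two penalties, the strict inequality follows. The inequality is strict because, e.g., when $b = 1$ the $d$-penalty is $r - 2 < \binom{r+1}{2}$ strictly, and in general the clique surplus strictly exceeds what the $b$-part can reclaim.

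The main obstacle is the bookkeeping in the case $b \ge 1$: one must verify that for every admissible pair $(c,d)$ with $\binom{c}{2} + d = b < \binom{r+1}{2}$ and $0 \le d \le c$, the quantity $\binom{c}{2}(r-c) + \bigl(d(r-2) - 3\binom{d}{2}\bigr)$ is strictly smaller than $\binom{r+1}{2}$ for each $r \in \{1,\dots,7\}$. This is a finite check but it is the crux — it is exactly where the restriction $r \le 7$ is used and cannot be dropped, since for larger $r$ the penalty $\binom{r}{2}(r-c)$ at $c \approx 2r/3$ overtakes $\binom{r+1}{2}$. I would organize it as a short table or a single displayed estimate valid for all $r \le 7$ simultaneously (for instance, bounding $r - c$ and using that the $d$-term is maximized near $d = (r-2)/3$), rather than a case analysis on $r$. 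If a uniform estimate proves awkward, falling back to seven individual verifications is acceptable since the statement is only claimed for $r \le 7$.
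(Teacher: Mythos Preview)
Your decomposition is exactly the paper's: both write
\[
3g_3(m,r) - (r-2)m = a\binom{r+1}{2} + (c-r)\binom{c}{2} + d\!\left(\tfrac{3}{2}(d-1)-(r-2)\right),
\]
use $a\ge 1$ to replace $a\binom{r+1}{2}$ by $\binom{r+1}{2}$, and then seek to bound the two remaining ``penalty'' terms separately. So the strategy is the same.

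There is, however, a genuine gap in your execution of the final step. Your suggested bound $\binom{c}{2}(r-c)\le \binom{r}{2}(r-c)$ evaluated near $c\approx 2r/3$ gives roughly $\binom{r}{2}\cdot r/3$, which for $r=7$ is $21\cdot 7/3 = 49 > 28 = \binom{8}{2}$; so this uniform estimate fails outright and cannot be repaired by tweaking constants. The issue is that replacing $\binom{c}{2}$ by $\binom{r}{2}$ is far too wasteful. Your fallback to a finite check does work (for $r=7$ the actual maximum of $\binom{c}{2}(r-c)$ over integers is $20$ at $c=5$, the $d$-penalty is at most $7$, and $20+7=27<28$, with slack shrinking to $1$), but you have not carried it out, and the margin is tight enough that the heuristics you give do not substitute for it.

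The paper closes this step differently: it treats $h_r(c)=(c-r)\binom{c}{2}$ and $q_r(d)=d\bigl(\tfrac{3}{2}(d-1)-(r-2)\bigr)$ as functions of a real variable, minimizes each by calculus, and obtains a single explicit function of $r$ whose positivity on $[1,7]$ is then checked (its positive roots are near $0.14$ and $7.21$). This avoids a table in $(r,c,d)$ and makes transparent why $r\le 7$ is the threshold.
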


\begin{proof}
\begin{align*}
3k_3(aK_{r+1}\cup C(b)) &- (r-2)m\\
&= 3\left(a\binom{r+1}{3} + \binom{c}{3} + \binom{d}{2}\right) - (r-2)\left(a\binom{r+1}{2} + \binom{c}{2} + d\right)\\
&= a\binom{r+1}{2} + (c-r)\binom{c}{2} + d\left(\frac{3}{2}(d-1)-(r-2)\right)\\
&\ge \binom{r+1}{2} + (c-r)\binom{c}{2} + d\left(\frac{3}{2}(d-1)-(r-2)\right)\\
\end{align*}

Define $h_r(c) = (c-r)\binom{c}{2}$ and $q_r(d) = d(\frac{3}{2}(d-1)-(r-2))$. By taking derivatives, we find the minimum values for these functions are attained at $c = \frac{1}{3}(r+1 + \sqrt{r^2-r+1})$ and $d = \frac{2r-1}{6}$, with minimum values $q_r(d) \ge -\frac{1}{24}(2r-1)^2$ and $h_r(c) \ge -\frac{1}{54}\left[(2r-1)(r+1)(r-2) + 2(r^2-r+1)^{3/2}\right]$, resulting in \[3g_3(m) - (r-2)m \ge \binom{r+1}{2} -\frac{1}{54}\left[(2r-1)(r+1)(r-2) + 2(r^2-r+1)^{3/2}\right]-\frac{1}{24}(2r-1)^2.\] This function of $r$ is positive for $1 \le r \le 7$ with positive roots at $r \approx 0.14$ and $r \approx 7.21$.
\end{proof}

\begin{thm}\label{thm:seqopt} If $1 \le r \le 7$ and $m \ge \binom{r+1}{2}+1$, then $M_k(m,r) < f_3(m,r)$ for $k = \ceil{r/2}$.
\end{thm}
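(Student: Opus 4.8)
The plan is to combine the two estimates just established. By Lemma~\ref{lem:ub}, any extremal graph $G \in \cG(m,r)$ satisfies $k_3(G) \le M_k(m,r)$ (indeed $\le \lfloor M_k^*(m,r)\rfloor$ under the inductive hypotheses, but we do not even need that refinement here), provided every cluster of $G$ has $e(R) \ge k$. So the first task is to justify that we may take $k = \lceil r/2\rceil$ in this bound — that is, that in an extremal connected graph with maximum degree $r \le 7$, every cluster is missing at least $\lceil r/2\rceil$ edges. For $r \le 5$ this is immediate from Theorem~\ref{thm:s=2} (every cluster is missing at least $3 \ge \lceil r/2\rceil$ edges when $r \le 6$). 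For $r = 6$ we still have $\lceil r/2 \rceil = 3$, so Theorem~\ref{thm:s=2} suffices. For $r = 7$ we have $\lceil r/2\rceil = 4$, and Theorem~\ref{thm:e=3} rules out $e(R) = 3$, while Theorem~\ref{thm:s=2} rules out $e(R)\in\{1,2\}$, so again every cluster has $e(R) \ge 4 = k$. (One also uses Corollary~\ref{connected} and Corollary~\ref{Delta=r} to reduce to the connected, $\Delta = r$ case; strictly, the statement of the present theorem is about $M_k(m,r)$ versus $f_3(m,r)$, and $f_3$ may be attained by a disconnected graph, but such a graph is handled directly by Corollary~\ref{connected}, so the content is the bound $M_k(m,r) < f_3(m,r)$ itself.)

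Next I would invoke Theorem~\ref{thm:seqoptsoln}, which gives $3M_k(m,r) \le (r-2)m$ for $m \ge \binom{r+1}{2}+1$ and $k = \lceil r/2\rceil$. Combined with Lemma~\ref{lem:1to7}, which states $(r-2)m < 3g_3(m,r)$ whenever $1 \le r \le 7$ and $a \ge 1$ (i.e. $m \ge \binom{r+1}{2}$), we chain the inequalities:
\[
3M_k(m,r) \le (r-2)m < 3g_3(m,r) \le 3f_3(m,r),
\]
where the last step is just the definition of $f_3$ as the maximum. Dividing by $3$ gives $M_k(m,r) < f_3(m,r)$, which is the claim. Note that the hypothesis $m \ge \binom{r+1}{2}+1$ is exactly what both Theorem~\ref{thm:seqoptsoln} and (a fortiori) Lemma~\ref{lem:1to7} require, and $a \ge 1$ holds automatically since $m \ge \binom{r+1}{2}+1 > \binom{r+1}{2}$.

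The only real subtlety — and the step I would be most careful about — is the bookkeeping on which value of $k$ is legitimate for each $r$, i.e. making sure that the cluster-exclusion theorems of Section~\ref{sec:red} actually license $k = \lceil r/2\rceil$ across the whole range $1 \le r \le 7$. The edge cases $r = 1, 2$ are degenerate (for $r = 1$ there are no triangles at all and the statement is vacuous or trivial; for $r = 2$, $\lceil r/2 \rceil = 1$ and every cluster trivially has $e(R) \ge 1$ since $\delta(R) \ge 1$). The genuinely tight case is $r = 7$, where we need the full strength of Theorem~\ref{thm:e=3} to push $k$ up to $4$; if Theorem~\ref{thm:e=3} only applied for $r \ge 8$ this argument would fail at $r = 7$, so it is worth double-checking that its hypothesis is indeed $r \ge 7$ as stated. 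Everything else is a direct substitution, so no further calculation is needed beyond what Lemma~\ref{lem:1to7} and Theorem~\ref{thm:seqoptsoln} already provide.
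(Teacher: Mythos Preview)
Your core argument (second paragraph) is exactly the paper's proof: chain $3M_k(m,r) \le (r-2)m < 3g_3(m,r) \le 3f_3(m,r)$ via Theorem~\ref{thm:seqoptsoln} and Lemma~\ref{lem:1to7}. The material in your first and third paragraphs about justifying $k=\lceil r/2\rceil$ through the cluster-exclusion theorems is unnecessary here---the statement is a purely numerical inequality between $M_k(m,r)$ (defined for any $k$) and $f_3(m,r)$, and does not require any graph $G$ or any hypothesis on clusters; that bookkeeping belongs to the proof of the Main Theorem, not to this one.
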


\begin{proof} Theorem \ref{thm:seqoptsoln} and Lemma \ref{lem:1to7} together show $3M_k(m,r) \le (r-2)m < 3g_3(m,r) \le 3f_3(m,r)$.
\end{proof}

\begin{lemma}\label{lem:r=8}
	For $r=8$, $m \ge \binom{r+1}{2}+1$, if the Main Theorem holds for $r \le 7$ and numbers of edges up through $m-1$, and every cluster has $e(R) \ge 5$, then $k_3(G) \le g_3(m,r)$.
\end{lemma}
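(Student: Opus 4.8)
The plan is to run exactly the same machinery as in Theorem~\ref{thm:seqopt}, but with $r=8$ and $k=5$ (so $k = \lceil r/2\rceil$ and also $r \le 2k+1 = 11$, $k \ge r/2$). First I would invoke Lemma~\ref{lem:ub}: since every cluster has $e(R) \ge 5 = k$, and we are assuming the Main Theorem holds for $r \le 7$ and for fewer than $m$ edges, with $m \ge \binom{r+1}{2}+1$, we get $k_3(G) \le \lfloor M_k^*(m,r)\rfloor \le M_k(m,r)$. So it suffices to show $M_5(m,8) \le g_3(m,8)$, i.e.\ $3M_5(m,8) \le 3g_3(m,8)$.

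Next I would analyze an optimal degree multiset $D$ for $M_5(m,8)$. By Lemma~\ref{lem:r-2}, $D$ has at most one entry from $[r-2]=[6]$. I would like to eliminate that entry, mirroring Lemma~\ref{lem:nod}: the three cases there used only $r \le 2k+1$ (for Lemma~\ref{lem:r-1-d}), $k < r$, and the arithmetic contradiction $r \le -\tfrac14$ in the last case, all of which hold for $r=8$, $k=5$. Then by Lemma~\ref{lem:rr-1} (valid since $k = 5 \ge r/2 = 4$) I may assume $D$ consists of $x$ copies of $r$ and $y$ copies of $r-1$ with $0 \le x \le r-2$. Exactly as in the proof of Theorem~\ref{thm:seqoptsoln}, the degree-sum formula gives $y = \frac{2m-xr}{r-1}$ and the weighted sum evaluates to $(r-2)m - \tfrac{x}{2}(2k-r) \le (r-2)m$, since $2k - r = 2 \ge 0$ and $x \ge 0$. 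Hence $3M_5(m,8) \le (r-2)m = 6m$.

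It remains to compare $6m$ with $3g_3(m,8)$. This is the point where Lemma~\ref{lem:1to7} does \emph{not} apply: its explicit function of $r$ has a root near $r \approx 7.21$, so for $r=8$ the bound $(r-2)m < 3g_3(m,r)$ can fail when $a = 0$ or when $a$ is small. So the main obstacle is precisely this final numerical comparison, and I expect the proof to split according to the size of $a$ (equivalently, of $m$). Writing $m = a\binom{9}{2}+b = 36a+b$ with $b = [c,d]$, $0 \le c \le 8$, $0 \le d < c$, one has
\[
3g_3(m,8) - 6m = 3\Bigl(a\binom{9}{3} + \binom{c}{3} + \binom{d}{2}\Bigr) - 6\Bigl(36a + \binom{c}{2} + d\Bigr) = 126\,a + 3\binom{c}{3} - 6\binom{c}{2} + 3\binom{d}{2} - 6d.
\]
The term $3\binom{c}{3} - 6\binom{c}{2} = \tfrac12 c(c-1)(c-6)$ is negative only for $c \le 5$, with minimum $-10$ at $c=3$ (or $c=4$), and $3\binom{d}{2}-6d = \tfrac32 d(d-5)$ has minimum $-\tfrac{75}{8}$, so over the finitely many pairs $(c,d)$ with $0 \le d < c \le 8$ the quantity $3\binom{c}{3} - 6\binom{c}{2} + 3\binom{d}{2} - 6d$ attains some minimum value $-C$ with $C$ a small explicit constant (a direct check gives $C \le 19$, the worst case being around $c=3,\,d=3$ or $c=4,\,d=3$). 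Then for $a \ge 1$ we have $126\,a - C > 0$, so $3g_3(m,8) > 6m \ge 3M_5(m,8) \ge 3k_3(G)$, which gives the claim with room to spare. The only remaining case is $a = 0$, i.e.\ $\binom{9}{2}+1 \le m < \binom{9}{2}$, which is vacuous; hence the hypothesis $m \ge \binom{r+1}{2}+1$ forces $a \ge 1$ and the lemma follows. I would double-check the constant $C$ by a short finite enumeration rather than by calculus, since $c$ and $d$ range over a tiny set.
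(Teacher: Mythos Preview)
Your approach has arithmetic errors that invalidate the final step. First, $3\binom{9}{3} - 6\binom{9}{2} = 252 - 216 = 36$, not $126$, so the coefficient of $a$ in $3g_3(m,8)-6m$ is $36a$. Second, $3\binom{c}{3}-6\binom{c}{2}=\tfrac12 c(c-1)(c-8)$, not $\tfrac12 c(c-1)(c-6)$; its minimum over integers $0\le c\le 8$ is $-30$ (at $c=5,6$), not $-10$. Together with the $d$-term minimum of $-9$ this gives $C=39$, and $36\cdot 1-39<0$. Concretely, for $m\in\{48,49,53,54\}$ (that is, $a=1$ with $(c,d)\in\{(5,2),(5,3),(6,2),(6,3)\}$) one has $6m>3g_3(m,8)$, so the inequality $3M_5(m,8)\le 6m$ alone cannot finish the argument.

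There is also a gap in your reduction to multisets containing only $7$'s and $8$'s. Case~2 of Lemma~\ref{lem:nod} replaces $d$ copies of $r-1$ together with the entry $d$ by $d$ copies of $r$; for $r=8,\ k=5$ the gain is $-\tfrac{d}{2}(d-5)$, which is \emph{negative} when $d=6$, so that case does not go through. (The bound $3M_5(m,8)\le 6m$ is nevertheless true, e.g.\ by the monotonicity $M_5\le M_4$ and Theorem~\ref{thm:seqoptsoln} with $k=4=\lceil 8/2\rceil$; but you did not argue this way.)

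The paper repairs both issues by working with $M_5^*(m,8)$ rather than $M_5(m,8)$. The forced $8$ in the multiset lets one eliminate a $6$ directly (replace a $6$ and an $8$ by two $7$'s, gaining $42-38=4$), and in the final form it forces $x\ge 1$, giving $3M_5^*(m,8)=6m-x$ with $x\in[7]$ determined by $x\equiv 2m\pmod 7$. For $a\ge 2$, and for $a=1$ with $(c,d)$ outside $\{5,6\}\times\{1,2,3,4\}$, the cruder bound $6m$ already suffices; for the eight residual values $m\in\{47,48,49,50,52,53,54,55\}$ the paper tabulates $6m-x$ against $3g_3(m,8)$ and checks $\lfloor M_5^*(m,8)\rfloor\le g_3(m,8)$ case by case. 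The floor is genuinely needed at $m=53$, where $3M_5^*=317>315=3g_3$ but $\lfloor 317/3\rfloor=105=g_3$.
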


\begin{proof}
We will restrict the multiset optimization problem for $r=8$, $k=5$ to multisets that contain at least one copy of $r=8$, since we know from Lemma \ref{Delta=r} that $G$ must have an $8$ in its degree multiset. Let $D$ be an optimal multiset that attains the maximum $M_5^*(m,8)$. It contains at most one element of $[6]$ by Lemma \ref{lem:r-2}.

First, $D$ contains at least one 8, so we may assume there is no 6: If there were both a 6 and an 8, then we could replace them with two 7's, increasing the weighted sum: $\binom{6}{2} + \binom{8}{2}-5 = 38$, and $2\binom{7}{2} = 42$. Therefore $D$ contains only 8's, 7's, and at most one element $d \in [5]$.

Second, if there are $d$ copies of $7$, then we change them and the $d \in [5]$ to $d$ copies of $8$.
This maintains the sum of $D$.
It also weakly increases the weighted sum: 
\[
	d\left(\binom{8}{2} - 5\right) - \left(\binom{d}{2} + d\binom{7}{2}\right) = -\frac{d}{2}(d - 5) \ge 0.
\]

Suppose $d \in [5]$ is in an optimal multiset $D$. Since $m \ge \binom{9}{2}+1$, either there are at least $7-d$ copies of $8$ in $D$, or there are at least $d$ copies of $7$ in $D$, but we have just handled the second case. In the first case, Lemma \ref{lem:r-1-d} gives the result. We conclude that there is an optimal multiset (among multisets containing an 8) that consists only of copies of 8 and copies of 7. By Lemma \ref{lem:rr-1}, changing seven 8's to eight 7's will increase $\sum_{d'\in D}w(d')$, so we have $x \in [7]$ copies of 7. By the degree sum formula, we have $y = \frac{2m-8x}{7}$ copies of 8.

The weighted sum of $D$ gives the value of $3M_5^*(m,8)$, which is\begin{align*}
\frac{2m-8x}{7}\binom{7}{2} + x\left(\binom{8}{2}-5\right) 6m-x < 6m.\end{align*} By the proof of Lemma \ref{lem:1to7}, $3f_3(m,8) - 6m \ge 36a + (c-8)\binom{c}{2} + d\left(\frac{3}{2}(d-1)-6\right)$.

The minimum value of $(c-8)\binom{c}{2}$ for integer values of $c$ between 0 and 8, inclusive, is $-30$, at $c=5,6$. For $c\ne 5,6$, the minimum value is $-24$.

The minimum value of $d\left(\frac{3}{2}(d-1)-6\right)$ for integer values of $d$ between 0 and 7, inclusive, is $-9$, at $d = 2,3$.  For $d \ne 2,3$, the minimum value is $-6$ at $d = 1,4$. For $d \ne 1,2,3,4$, the minimum value is less than $-6$.

For $a \ge 2$, $3f_3(m,8) - 6m \ge 72 - 30 - 9 > 0$. For $a = 1$ and $c \ne 5,6$, $3f_3(m,8) - 6m \ge 36 - 24 - 9 > 0$. For $a = 1$, $c = 5, 6$, and $d \ne 1, 2, 3, 4$, $3f_3(m,8) - 6m > 36 - 30 - 6 = 0$. 

For $a = 1$, $c=5,6$, and $d = 1, 2, 3, 4$, we calculate $3M_5^*(m,8) = 6m-x$ exactly and compare it to $3k_3(aK_{r+1}\cup\cC(b))$. We can determine $x$ for each $m$ because $2m = 8x + 7y \equiv x \mod 7$ and $x \in [7]$.
\begin{center}
	\begin{tabular}{llcc}
	$m$ & $x$ & $3M_5^*(m,8) = 6m-x$ & $3g_3(m,r)$\\
	\hline
	47 & 3 & 279 & 282\\
	48 & 5 & 283 & 285\\
	49 & 7 & 287 & 291\\
	50 & 2 & 298 & 300\\
	52 & 6 & 306 & 312\\ 
	53 & 1 & 317 & 315\\
	54 & 3 & 321 & 321\\
	55 & 5 & 325 & 330\\
	\end{tabular}
\end{center}
In all cases, by Lemma~\ref{lem:ub}, $k_3(G) \le \lfloor M_5^*(m,8) \rfloor \le k_3(aK_{r+1}\cup\cC(b))$.\qedhere
\end{proof}

\section{Proof of Main Theorem}\label{sec:mainthm}

We now prove our main theorem.

\begin{main}If $G$ is a graph with $m$ edges and maximum degree at most $r$ for any fixed $r \le 8$, then $$k_3(G) \le k_3(aK_{r+1}\cup \cC(b)),$$ where $m = a\binom{r+1}{2}+b$ and $0 \le b < \binom{r+1}{2}$. That is, the graphs with the maximum number of triangles consist of as many disjoint copies of $K_{r+1}$ as possible, with the remaining edges formed into a colex graph.\end{main}

\begin{proof}
We will induct on $m$ and $r$. For $m \le \binom{r+1}{2}$, the Kruskal-Katona Theorem implies the theorem, so we may assume $m \ge \binom{r+1}{2}+1$. Let $G$ be an extremal graph.

By Corollary \ref{connected} and induction on $m$, if $G$ is not connected, then $k_3(G) \le k_3(aK_{r+1}\cup \cC(b))$, so we assume $G$ is connected. In particular, $G$ does not contain a $K_{r+1}$. By Corollary \ref{Delta=r} and induction on $m$ and $r$, $G$ contains a vertex of degree $r$. Therefore $G$ contains a cluster, and every cluster has at least one red edge and at least one blue edge.

For $r = 1$, every $G \in \cG(m,1)$ has $k_3(G) = 0 = k_3(aK_2\cup \cC(b))$. For $r=2$, we have assumed $G$ does not contain a $K_{r+1} = K_3$, so there are no triangles, and $G$ is not extremal. 

For $3 \le r \le 7$, Theorems \ref{thm:s=2} and \ref{thm:e=3} imply that every cluster of $G$ has $e(R) \ge \ceil{r/2}$. By Lemma \ref{lem:ub}, Lemma \ref{thm:seqopt}, and Theorem \ref{thm:seqopt}, $k_3(G) < f_3(m,r)$.

For $r = 8$, Theorems \ref{thm:s=2}, \ref{thm:e=3}, and \ref{thm:e=4} show that every cluster has $e(R) \ge 5$. Lemma \ref{lem:r=8} implies $k_3(G) \le k_3(aK_{r+1}\cup\cC(b))$. 
\end{proof}

\section{Open Problems}\label{open}

The question ``Which graphs with a fixed number of edges $m$ and maximum degree at most $r$ maximize the number of $K_t$'s?'' remains open for maximum degrees $r \ge 9$ and remains open for complete subgraph sizes $t \ge 4$. Similarly, the corresponding question when fixing the number of vertices $n$ instead of the number of edges remains open for $r \ge 7$ except when $a=1$. Both are extremely natural questions.

\bibliographystyle{plain}
\bibliography{Bibliography}
\end{document}